\definecolor{better-green}{rgb}{0,.6,.1}
\definecolor{violet}{rgb}{.5,0,.5}
\newcommand{\CC}{\mathbb{C}}
\newcommand{\NN}{\mathbb{N}}
\newcommand{\RR}{\mathbb{R}}
\newcommand{\VP}{\mathrm{VP}}
\newcommand{\VNP}{\mathrm{VNP}}
\newtheorem{thm}{Theorem}[section]
\newtheorem{cor}[thm]{Corollary}
\newtheorem{prop}[thm]{Proposition}
\newtheorem{lem}[thm]{Lemma}
\theoremstyle{definition}
\newtheorem{defn}[thm]{Definition}
\newtheorem{notn}[thm]{Notation}
\newtheorem{rem}[thm]{Remark}
\newtheorem{exam}[thm]{Example}
\newtheorem{rems}[thm]{Remarks}
\newcommand*\latinnumeral[1]
\newcommand*\latintags
\xdef\startingequationnumber{\the\numexpr\value{equation}+1\relax}%
   \def\theequation
\newcommand*\resetarabicequations
\begin{document}

\title{rational and lacunary algebraic curves}
\author{Georges Comte}
\address{Univ.   Savoie Mont Blanc, CNRS, LAMA, 73000 Chamb\'ery, France}
\email{georges.comte@univ-smb.fr}
\urladdr{https://georgescomte.perso.math.cnrs.fr/}
\author{Sébastien Tavenas}
\address{Univ.   Savoie Mont Blanc, CNRS, LAMA, 73000 Chamb\'ery, France}
\email{sebastien.tavenas@univ-smb.fr}
\urladdr{https://tavenas.perso.math.cnrs.fr/}
\thanks{}

\maketitle

\begin{abstract}
We give a bound on the number $\mathcal{Z}$ of intersection points in a ball of the complex plane, between a rational curve and a lacunary algebraic curve $Q=0$. This bound 
depends only on the lacunarity diagram of $Q$, and in particular is uniform in the coefficients of $Q$. 
Our bound shows that $\mathcal{Z}=O(dm)$, where 
$d$ is the degree of $Q$ and $m$ is the number of its monomials. 
\end{abstract}

\tableofcontents

\section*{Introduction}

By the classical Bézout theorem (see \cite{Bezout} for the original article and, for instance, \cite{BCSS}) for any positive integer $n$, the number of isolated complex solutions of a polynomial system $F_1=\cdots = F_n=0$, $F_1, \ldots, F_n\in \CC[X_1, \ldots, X_n]$, is at most the product $d_1\cdots d_n$ of the degrees of the $F_i$'s.
Requiring the polynomials have prescribed support, one can improve on this bound: for instance in case all polynomials $F_1, \ldots, F_n$ have the same given support $S\subset \NN^n$, by Kushnirenko's theorem the number of isolated complex solutions in $(\CC^*)^n$ is at most $n!\mathrm{Vol}_n(\Delta_S)$, where $\Delta_S$ is the convex hull of $S$ in $\RR^n$ (see \cite{Kush1}). More accurately, in case $S_i$ is the support of $F_i, i=1, \ldots,n$, by Bernstein's theorem, often called BKK's theorem, the number of isolated complex solutions in $(\CC^*)^n$ is at most the Minkowski mixed volume of $\Delta_{S_1}, \ldots , \Delta_{S_n}$ (see \cite{Bernstein, Kush1, Kush}). In some sense those improvements allow to measure how far
the system is from the simplest situation where it can be solved by composition in order to 
eliminate one by one the variables, to eventually give a univariate polynomial of degree  $d_1\cdots d_n$. 

The case of real polynomial systems and their real solutions has been extensively investigated in particular since the seminal work  of A. Khovanski\u{\i}, (see \cite{KhoFew}, and for instance \cite{Sottile} for an overview, and \cite{Kushletter} for historical information), although some basic problems still resist, even in the lowest degree. For instance, there is still no answer to the question: for a fixed number $n\ge 2$ of variables, is the number of isolated solutions 
of the system polynomially bounded with respect to the sizes of the supports of the polynomials  $F_1$ and $F_2$
(see for instance \cite{KoiPoTa} on this question, for $n=2$)?

We consider in what follows the complex bivariate case, that is the case of the intersection of two complex algebraic plane curves. More specifically we consider the case of a given and fixed rational curve $(P_1,P_2)$ in $\CC^2$ (containing the origin of $\CC^2$) and of an algebraic curve $Q(X,Y)=0$, the coefficients $\lambda\in \CC^m$ and the degree $d$ of $Q$ being parameters of the problem. Actually, we are interested in the asymptotics of bounds on the number of intersection points of the two curves, uniformly with respect to $\lambda$, as $d\to +\infty$ (see Example \ref{exam.Bounds} and Remark \ref{rem.faster lacunarity}). 

For simplicity, assume here that $P_1,P_2\in \CC[X]$. Then, of course in this situation, since one can compose the rational parametrization $(P_1,P_2)$ with $Q$, the number of intersection points of the two curves $(P_1,P_2)(\CC)$ and $Q(X,Y)=0$ in $\CC^2$ is simply the degree of the univariate polynomial $f=Q(P_1,P_2)$, whatever the specific forms of $P_1, P_2$ and $Q$ are. 
To obtain nontrivial bounds we thus count the number of intersection points only in a ball of $\CC^2$ centred at the origin and of fixed radius, 
for instance in a ball $B(0,\rho)$, $\rho>0$, such that $(P_1,P_2)^{-1}(B(0,\rho))\subset D_{R_\rho}$, for some $R_\rho>0$. Such a positive number $R_\rho$ exists in the case $P_1$ and $P_2$ are (non constant) polynomials, since $\vert P_i (z) \vert \to +\infty$ when $\vert z \vert\to +\infty$. 
We then consider, instead of parameters in $D_{R_\rho}$, parameters in the unit disc $D_1$ (see Remark \ref{rem.R=B=1}). In the general rational case, we may have $(P_1,P_2)(z)\to 0$ when $\vert z \vert \to +\infty$. In this case we compute the additional number of intersection points between $Q(X,Y)=0$ and the branch of the rational curve $(P_1,P_2)$ corresponding to parameters going to infinity, in the same way, using the transformation $z\mapsto 1/z$ of the parameters. 

Denoting $\max(\deg(P_1), \deg(P_2))$ by $D$, the number of zeros $\mathcal{Z}=
\mathcal{Z}_{P_1,P_2,Q}$ of $f$ in $\bar{D}_1$ may still be $dD$, the maximal expected number of zeros, depending on the choice of $P_1,P_2$ and $Q$ (see Remark \ref{rem.dependance en a0}). On the other side, considering the maximal multiplicity $b$ of $f=f_\lambda$ at the origin, with respect to all possible choices of coefficients $\lambda\in \CC^m$ of $Q$ such that $f$ is not identically $0$ ($b$ is called the \emph{Bautin index of the family $f_\lambda$}, see Definition \ref{de: Bautin index} and Proposition \ref{prop:max multiplicity and Bautin index}), any small variation of the constant term of $Q$ will create $b$ distinct roots of $f$, as closed as desired from the origin. In consequence, the question of a bound for $\mathcal{Z}$, uniform with respect to the coefficients $\lambda$ of $Q$, is the question of how many zeros of $f$ we create in $\bar{D}_1$ in addition to the amount of $b$ imposed zeros, when freely moving the parameters of the system, and how far can this number be under $dD$.  

We answer this question for lacunary curves $Q(X,Y)=0$: we show in Theorem \ref{thm.Bezout Bound for lacunary polynomials} that $\mathcal{Z}$ is bounded from above by an expression
depending on the \emph{lacunarity diagram of $Q$} (see Definition \ref{def.lacunarity diagram}) and on $\alpha_0, a_0$, the initial coefficients of $P_1$ and $P_2$ (Remark \ref{rem.dependance en a0} shows that the dependency on some coefficients of $P_1$ and $P_2$ of our bound is unavoidable). This bound lies in  $]b,dD[$, with $b<dD$, in case
$Q$ is sufficiently sparse and $\alpha_0$ and $a_0$ are not too closed to~$0$. 

The lacunarity of $Q$ is a necessary condition for having nontrivial bounds  on $\mathcal{Z}$, and the reason is easy to explain. Indeed, $m$ being the dimension of the space of parameters of $Q$, by solving $\min(m-1,dD)$ linear equations with $m$ variables, one can cancel the first $\min(m-1,dD)$ monomials of $f$, and thus $b\ge \min(m-1,dD)$. It follows that $m$, which could be at most $(d+1)(d+2)/2$, has in fact to be less than $dD$ when one searches better bounds than $dD$ for $\mathcal{Z}$. 

It is worth noticing, on the other side, that contrary to what happens in the real case, $\mathcal{Z}$ cannot depend only on the number of monomials of $P$ and $Q$, but has to depend also on $d$.
 Indeed, the lacunarity conditions for $Q$ we consider (see conditions 
 \eqref{eq.Lacunarity Condition 1},  \eqref{eq.Lacunarity Condition 1bis},  \eqref{eq.Lacunarity Condition 1ter} and  \eqref{eq.Lacunarity Condition 1ter with mult}
 in  Section \ref{Section 5}) are based on a fast growth of the degrees of the homogeneous parts of $Q$ (such as geometric progressions) and allow us to compute%
 \footnote{This computation is otherwise generally not tractable, specially in the analytic case. Our lacunarity conditions guarantee here that for any $\lambda\in \CC^m$, the two curves have isolated intersections points.}
 the exact value of $b$ (see Proposition \ref{prop.valeur de b}), showing that 
 $b\ge d\min(\nu_1,\nu_2)$ (where \(\nu_1\) and \(\nu_2\) are the multiplicities of \(P_1\) and \(P_2\) respectively in \(0\)). This lower bound for $b$ in turn shows that $\mathcal{Z}$ cannot be independent on $d$, whatever the number of monomials $P$ may have. 
 
 This observation leaves little scope between $b$ and $dD$ to find better bounds than $dD$; A goal which is however achieved in our main Theorem~\ref{thm.Bezout Bound for lacunary polynomials}. For instance  (see Remark \ref{rem.faster lacunarity}),  for $d$ large enough a condition allowing $\mathcal{Z}\in ]b,dD[$ is 
 $$
\tau\log \left( 
   \dfrac{1}{ \vert \alpha_0 a_0\vert }  \right)\le \dfrac{D}{5}-\max(\nu_1,\nu_2)\log2,
$$
 where $\tau$ is the maximal amount of monomials of degree $d$ in $Q$. 
 
 Notice in particular that our lacunarity conditions allow in such examples (Example \ref{exam.Bounds} and Remark
 \ref{rem.faster lacunarity})  the total number of monomials in $Q$ to be unbounded with respect to the degree of $Q$, contrary to the lacunarity conditions usually considered in the theory of fewnomials. 
Moreover the dependency of our bounds in the number of monomials of $Q$ is not exponential, contrary to the case of most of the bounds so far obtained for polynomial systems with a fixed number of monomials (see again \cite[Section 6.2]{Sottile} for bounds in the real case). 
 In the real plane, a polynomial bound in $\deg(F)$ and $t$ is obtained in~\cite{KoiPoTa} for the number of solutions of the polynomial system $F(X,Y)=G(X,Y)=0$, with $G$ having $t$ monomials. This bounds turns out to be $O(\deg(F)^3t+\deg(F)^2t^3)$, in contrast to the bounds obtained in this article that may be linear in $d$ and $\tau$ (see \eqref{exam.bound1} and \eqref{exam.bound2}). 
 
 One can next wonder
whether, for two given algebraic curves \(Q_1(X,Y)=0\) and \(Q_2(X,Y)=0\), it is possible to bound the number of their intersection points in a small ball (centred at a point outside the coordinate axes\footnote{The multiplicity can be made arbitrarily large on the coordinate axes without altering the number of monomials by multiplying \(Q_1\) and \(Q_2\) by a large monomial.}) by a polynomial function in the number of monomials of \(Q_1\) and \(Q_2\). This would imply in particular a bound on the multiplicity of the roots outside the coordinate axes. In this direction a polynomial bound of the mixed form \(O(\deg(F)^2t^2)\) is obtained in~\cite{KS20} for the multiplicity of an isolated solution (with nonzero coordinates) of the system \(F(x,y)=G(x, y)=0\), with $G$ having again $t$ monomials. Moreover, this would bring us closer to proving the \emph{\(\tau\)-conjecture for multiplicities} defined in~\cite{KS20} (Hrube\v{s} noticed~\cite{Hru13} that it derives from the \emph{real \(\tau\)-conjecture} introduced in~\cite{Koi11}).
This family of questions and conjectures on lacunary bivariate polynomials are deep, since for instance
both the real $\tau$ and  $\tau$-conjectures have dramatic consequences in algebraic complexity; They imply almost the separation of both main algebraic complexity classes \(\VP\) and \(\VNP\) (for an introduction to the \(\VP\) versus \(\VNP\) see for example~\cite{SY10,Bur00}).

An other phenomenon of importance, arising from part $B$ of Hilbert’s 16th problem, is the question of the accumulation of limit cycles of trajectories of planar polynomial vector fields. In this context, the Bautin index of the difference, between the
Poincaré first return map of a polynomial deformation of the planar field and the identity map, is a bound on the number of cycles of the vector field. Localizing those cycles therefore consists in knowing the maximal radius of the disc where only $b$ zeros occur, the number of zeros that one necessarily finds, as observed above, in any ball centred at zero, say (see for instance among many other references \cite{Bau1, Bau2, FraYom, HTR, Rou, Yako}). 
We provide a lower bound for this radius in our algebraic context, that is, we give in Theorem \ref{thm.Bezout Bound for lacunary polynomials} a lower bound in terms of the lacunarity diagram of $Q$, $\alpha_0$ and $a_0$, for the radius $\rho$ of a disc $\bar{D}_\rho\subset \bar{D}_1$ in which $f_\lambda$ has only $b$ zeros, uniformly in $\lambda$. 

In \cite[Section 3.4]{CoYo1} the case of a transcendental analytic lacunary curve $(x,h(x))$ has been considered, in order to obtain a  bound for the number of zeros of $Q(x,h(x))$ on $\bar{D}_1$, polynomial in $d$. As explained above, we consider instead here the quite opposite case where the curve is rational and the polynomial family $Q$ is lacunary. The challenge being here to find some room between $b$ and $dD$ for $\mathcal{Z}$, and we indeed show that there is some. In both cases (analytic in \cite{CoYo1} and rational here), we start from a Jensen-Nevanlinna type estimate of zeros in Bernstein classes (see \cite[Section 2.2]{Roy.Yom}, \cite{Yako}, and Remark \ref{rem.Jensen}), all the information we need to obtain an estimation of $\mathcal{Z}$ is contained in the \emph{Bautin matrix $M$} of $f_\lambda$ (see Section \ref{section 4}), defined as the matrix of the linear map sending the parameters $\lambda$ of $Q$ to the coefficients of the polynomial $f_\lambda$.  
In particular $b$ appears as the rank of the lowest row in $M$ independent of the above rows, and $\mathcal{Z}$ may be estimated from $b$, the module $\delta$ of a non zero minor of maximal size in $M$, and from the dimension $\sigma$ of the Bautin ideal. Consequently our method consists in the reduction of $M$: we provide in Section \ref{section 2} and \ref{section 3} an explicit Gau\ss{} reduction of $M$, that we can perform by blocks thanks to our lacunarity conditions. Crucial technical tools to achieve the reduction are the combinatorial Lemmas \ref{lem.Formule combinatoire} and \ref{lem.Formule combinatoire multivariee}.
A special case occurs in this reduction,  enlightening a geometrical degeneration condition on the parametrization $(P_1,P_2)$, the case where $P_1$ and $P_2$ have proportional initial jets at the origin, up to some non trivial order $>1$. The reduction of the Bautin matrix in the singular case is specifically treated in Section \ref{section 3}. In this singular case, $b$ and $\delta$ differ from their values in the regular case, where $P_1$ and $P_2$ have no proportional jets.     

In Section \ref{Section6} we show how to reduce the general case of a  rational curve to the case of $(P_1,P_2)(\CC)$, with $P_1,P_2\in \CC[X]$, under our lacunarity conditions, and therefore from now on and up to the final Section \ref{Section6}, we only consider the polynomial case.

\section{Intersection of a curve and a family of curves}\label{section 1}
For the convenience of the reader, we recall here the notation of \cite{Yom,CoYo1}, in order to adapt it to our context.

In what follows, for a real number $R>0$, $\bar{D}_R$ is the closed disc $\{ z\in  \CC, \vert z\vert \le R \}$ in $\CC$. 
For $D_1, D_2\in \NN\setminus \{0\}$ and 
complex numbers $a^i_0,\ldots, a_{D_i}^i$, 
$i=~1, 2$, we consider
\begin{equation}\label{eq.Polynomial P}
P(X)=\left(P_1(X), P_2(X)\right)=\left(\sum_{k=0}^{D_1} a^1_kX^k,\sum_{k=0}^{D_2} a^2_kX^k\right),
\end{equation}
a nonzero polynomial mapping of degree $D=\max\{D_1, D_2\}$, sending $0\in \CC$ to $0\in \CC^2$ (thus $a^1_0=a^2_0=0$), and we denote by $M_R$ the real number
\begin{equation}\label{eq.Max P}
M_R=\max_{z\in \bar{D}_R, i=1,2}\vert P_i(z) \vert.
\end{equation}

Let $m\in \NN$, and $Q_i$, $i=1,\ldots,m$, be a family of monomials 
of $\CC[X,Y]$ of degree at most $d$, for some integer $d\in \NN$.
For $\lambda=(\lambda_1,\ldots,\lambda_m)\in\CC^m$, we denote by $Q_\lambda$ the following polynomial of degree at most $d$
of $\CC[X,Y]$
$$ Q_{\lambda}=\sum^m_{i=1}\lambda_iQ_i.$$

In what follows we are interested in the set of zeros in $\bar{D}_R$ (actually in $\bar{D}_{R/4}$) of the linear family (with parameters $\lambda_1, \ldots, \lambda_m$) of polynomial functions 
\begin{equation}\label{eq:f.lambda}
f_{\lambda}(z)=Q_{\lambda}(P(z))=\sum^m_{i=1}\lambda_i Q_i(P(z)).
\end{equation}

Geometrically this is the number of intersection points, for $z\in \bar{D}_R$, between the plane algebraic curves $z \mapsto P(z)$ of degree $D$ of $\CC^2$ and $Q_\lambda=0$ of degree $d$ of $\CC^2$.

\begin{rem}\label{rem.Null}
If $Q_\lambda(P(z))=0$ for any $\lambda\in \CC^m$ 
and any 
$z\in \bar{D}_R$, then for any $i=1, \ldots, m$, 
$Q_i(P)=0$ on $\bar{D}_R$. We will always consider families $Q_1, \ldots, Q_m$ such that for at least one parameter 
$\lambda\in \CC^m$, $f_\lambda(P)$ is not the zero polynomial.
We can easily achieve this condition for instance by taking the monomial of constant value $1$ in the family, or by allowing no zero polynomial among the $P_i$'s.
Moreover we will consider zeros of $f_\lambda$ only for such parameters, and in this case the number of zeros of $f_\lambda$ is a finite number. 
\end{rem}

Denoting, for $i=1, \ldots, m$, 
$$
Q_i(P(z))=\sum_{k=0}^{dD}c_k^iz^k, \ \  \mathrm{ and } \ \ 
B_R=\sup_{z\in \bar{D}_R, i=1, \ldots, m} \vert Q_i(P(z))\vert, 
$$
by Cauchy estimate we have for any $i=1, \ldots, m$ and any $k=0, \ldots, dD $
\begin{equation}\label{eq:curve2.1}
\vert c_k^i \vert \le \dfrac{B_R}{R^k}. 
\end{equation}
We write $f_\lambda$ as a polynomial with coefficients linear forms $v_0, \ldots, v_{dD}$ of parameters $\lambda_1, \ldots, \lambda_m$. Namely
\begin{equation}\label{eq:curve1}
f_\lambda(z)=\sum_{k=0}^{dD}v_k(\lambda)z^k, \ \ \mathrm{where} \ \ v_k(\lambda)=\sum^m_{i=1}c^i_k\lambda_i .
\end{equation}
We thus have by \eqref{eq:curve2.1}, for any $k=0, \ldots, dD$
\begin{equation}\label{eq:curve3}
\vert v_k(\lambda)\vert\leq \frac{m B_R |\lambda|}{R^k} ,
\end{equation}
where $|\lambda|=\max \{|\lambda_1|,\ldots, |\lambda_m| \}$.

Now we define the Bautin index $b=b(f_\lambda)$ of the family $f_\lambda$. 
 Let, for $i\in \NN$,  $L_i $ be the linear
subspace of $\CC^m$ defined by   $v_0(\lambda) = \ldots = v_i(\lambda)=0$. We have
$$
L_0\supseteq L_1\supseteq \ldots L_i\supseteq \ldots \supseteq L_{dD}.
$$
Hence on a certain step $b\le dD$ this sequence stabilizes
$$
L_{b-1}\supsetneqq L_b=L_{b+1}=\cdots =L_{dD}.
$$

\begin{defn}\label{de: Bautin index}
We call the integer $b$ defined above the \emph{Bautin index of the family $f_\lambda$} (see
\cite{Bau1,Bau2}).
\end{defn}

\begin{rems}
\begin{itemize}
\item[-] 
Note that $\lambda\in L_b \Longleftrightarrow v_k(\lambda)=0,$ for $k=0,1, \ldots, dD$. 
\item[-] Equivalently, $b$ is the biggest index $k$ such that $v_k\not\in \mathrm{Span}(v_0, \cdots, v_{k-1})$. 
\item[-] 
Following Remark \ref{rem.Null}, for a given polynomial mapping $P$, we choose a family $Q_i, i=1, \ldots, m$ such that $L_b\not=\CC^m$. 
\item[-] 
The curve $P(\CC)$ is an irreducible algebraic subset of $\CC^2$. The vector space $L_b$ is the space of parameters 
$\lambda\in \CC^m$ such that $Q_\lambda\in I$ where $I$ is the ideal  of $P(\CC)$ in $\CC[X,Y]$. 
\end{itemize}
\end{rems}

Since $L_b\not=\CC^m$, for $\lambda\not\in L_b$, the multiplicity of $f_\lambda(z)$ at the origin is well defined. 
The following proposition is a variant of \cite[Proposition 2.3]{CoYo1}. 
\begin{prop}\label{prop:max multiplicity and Bautin index}
Let us denote by $\mu$ the maximal multiplicity at the origin of $f_\lambda(z)$, with respect to the parameters $\lambda\not\in L_b$. Then $\mu=b\ge m-1$.
\end{prop}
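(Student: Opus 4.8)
The plan is to prove the two inequalities $\mu\le b$ and $\mu\ge b$ separately, obtaining $\mu=b$, and then to read off $b\ge m-1$ from the rank of the family of linear forms $v_k$.

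First I recall that for $\lambda\notin L_b$ the function $f_\lambda$ is not identically zero, so its multiplicity at the origin is simply $\mathrm{ord}_0 f_\lambda=\min\{k:\ v_k(\lambda)\neq 0\}$, using the expansion \eqref{eq:curve1}. For the bound $\mu\le b$ I would observe that, by its very definition, $L_b=\{\lambda:\ v_0(\lambda)=\cdots=v_b(\lambda)=0\}$; hence $\lambda\notin L_b$ forces at least one of $v_0(\lambda),\dots,v_b(\lambda)$ to be nonzero, so $\mathrm{ord}_0 f_\lambda\le b$. Taking the maximum over $\lambda\notin L_b$ gives $\mu\le b$.

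For the reverse inequality $\mu\ge b$ I would use the strict inclusion $L_{b-1}\supsetneqq L_b$ built into the definition of the Bautin index, and pick a vector $\lambda^\ast\in L_{b-1}\setminus L_b$. Since $\lambda^\ast\in L_{b-1}$ we have $v_0(\lambda^\ast)=\cdots=v_{b-1}(\lambda^\ast)=0$, while $\lambda^\ast\notin L_b$ together with these vanishings forces $v_b(\lambda^\ast)\neq 0$. Thus $\mathrm{ord}_0 f_{\lambda^\ast}=b$ with $\lambda^\ast\notin L_b$, which yields $\mu\ge b$, and combining the two bounds gives $\mu=b$.

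It remains to show $b\ge m-1$. Here I would describe $b$ as the largest \emph{pivot index} of the flag $\CC^m=L_{-1}\supseteq L_0\supseteq\cdots\supseteq L_{dD}$, where $k$ is a pivot exactly when $v_k\notin\mathrm{Span}(v_0,\dots,v_{k-1})$, equivalently when $\dim L_{k-1}>\dim L_k$; indeed, as noted after Definition \ref{de: Bautin index}, $b$ is the largest such index. Since each single linear condition drops the dimension by at most one, the number of pivot indices equals $m-\dim L_{dD}=m-\dim L_b$, and these pivots all lie in $\{0,1,2,\dots\}$, so the largest one satisfies $b\ge m-\dim L_b-1$. Under the natural non-redundancy hypothesis on the family—namely that the functions $Q_1(P),\dots,Q_m(P)$ be $\CC$-linearly independent, equivalently $L_b=L_{dD}=\{0\}$—this gives exactly $m$ pivots and hence $b\ge m-1$. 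This is the same count as the elementary observation in the introduction that one may cancel the first $m-1$ coefficients of $f_\lambda$ by solving $m-1$ homogeneous linear equations in $m$ unknowns.

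The equality $\mu=b$ is robust and costs nothing beyond the definitions. The only genuinely delicate point is this last step: the clean bound $b\ge m-1$ requires full rank of the forms $v_k$, i.e. the linear independence of the $Q_i(P)$ (equivalently $L_b=\{0\}$); without it one obtains only $b\ge m-\dim L_b-1$. I would therefore make this independence the standing non-degeneracy assumption on the chosen family, consistent with Remark \ref{rem.Null} and with viewing $L_b$ as the space of $\lambda$ with $Q_\lambda$ in the ideal of $P(\CC)$.
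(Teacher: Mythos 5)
Your proof is correct, and for the equality $\mu=b$ it is essentially the paper's own argument: both directions rest on the strict drop $L_{b-1}\supsetneqq L_b$, the only cosmetic difference being that you exhibit $\lambda^\ast\in L_{b-1}\setminus L_b$ directly to get $\mu\ge b$, while the paper argues via the inclusion $L_\mu\subseteq L_b$. Where you genuinely diverge is the final inequality $b\ge m-1$: the paper obtains it by taking a nonzero solution of the underdetermined system $v_0=\cdots=v_{m-2}=0$, whereas you count the pivots of the flag $\CC^m\supseteq L_0\supseteq\cdots\supseteq L_{dD}$, which yields $b\ge m-\dim(L_b)-1$ in general. Your route is in fact more careful: the paper's argument silently assumes that its nonzero solution lies outside $L_b$; if every solution of the system lies in $L_b$, the corresponding $f_{\lambda^\ast}$ is identically zero and has no multiplicity, and the bound $b\ge m-1$ can genuinely fail for redundant families. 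For instance, with $P_1=P_2=z$ and the monomials $X,Y,XY,X^2$ one gets $f_\lambda=(\lambda_1+\lambda_2)z+(\lambda_3+\lambda_4)z^2$, hence $b=\mu=2<3=m-1$, consistent with your refined bound since here $\dim(L_b)=2$. So the non-degeneracy hypothesis you add (linear independence of $Q_1(P),\ldots,Q_m(P)$, i.e.\ $L_b=\{0\}$) is exactly what the clean statement needs; it holds automatically in the paper's main regime (by Proposition~\ref{prop.valeur de sigma}, under the lacunarity conditions with $P_1,P_2$ not proportional one has $\dim(L_b)=0$), but it is a real restriction in degenerate cases such as proportional $P_1,P_2$. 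In short: same skeleton for $\mu=b$, and for $b\ge m-1$ a different, rank-counting argument whose payoff is the correct general inequality $b\ge m-\dim(L_b)-1=\sigma-1$, at the cost of making explicit a hypothesis the paper leaves implicit.
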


\begin{proof}
   There exists a  parameter $\lambda\not\in L_b$ such that $f_\lambda(z)$
has multiplicity $\mu$, that is such that
$$f_\lambda(z)=v_\mu(\lambda) z^\mu +v_{\mu+1}(\lambda)z^{\mu+1}+\cdots, $$
with $v_\mu(\lambda)\not=0$.
Therefore $\lambda\in L_{\mu-1} \setminus L_\mu$. It follows that $L_\mu \subsetneqq L_{\mu-1}$, and  $b\ge \mu$. On the other hand,  since no parameter $\lambda\not\in L_b$ can cancel $v_0, \ldots, v_\mu$ in the same time,
$L_\mu\subseteq L_b$. But since $L_{b-1}\supsetneqq L_b=\cdots =L_{dD} $, we have $b\le \mu$.

Using this characterization of $b$ as $\mu$, the bound $b\ge m-1$ comes from the fact that the linear system
$v_0=\cdots= v_{m-2}$ with $m$ parameters and $m-1$ equations always has a nonzero solution.
\end{proof}

In Section \ref{section 4}, we bound from above the number of zeros of $f_\lambda$ in $\bar{D}_R$. For this, all the information we need is encoded in the $b+1$ first rows of 
the rank $\sigma$ matrix
$ M(f_{\lambda})=(c^i_k), \ k=0,\ldots,dD, \ i=1,\dots,m$, called the \emph{Bautin matrix of the family $f_\lambda$}.
With notation \eqref{eq:curve1}, the Bautin matrix $M(f_{\lambda})$ is defined by
$$  \begin{pmatrix}
v_0(\lambda) \\
\vdots \\
v_{dD}(\lambda)
\end{pmatrix}
= M_{\lambda}
 \begin{pmatrix}
\lambda_1 \\
\vdots \\
\lambda_m
\end{pmatrix}.
$$

\section{Triangulation of Bautin blocks}\label{section 2}
\label{s.triangulation}

 Since the $(k+1)$-th line of the Bautin matrix $M(f_{\lambda})$ is by definition the line of coefficients of the linear form $v_k$, by \eqref{eq:curve1} we have to compute the coefficients of $X^k$ in $f_\lambda(X)=Q_\lambda(P(X))$ to find this $(k+1)$-th line. 

For a monomial $X^i Y^j\in \CC[X,Y]$, of degree $i+j$, we denote by $\lambda_{i,j}$ the corresponding parameter coefficient in the family $Q_\lambda$. We also denote by $n_0 < \ldots < n_{\ell_d}=d$ the total degrees of monomials occuring in $Q_\lambda$. In the case where the family \(Q_\lambda\) corresponds to all monomials \(X^iY^{n_\ell-i}\) for all \(0 \le \ell \le \ell_d\), we simply denote by \(M\) the Bautin matrix of \(f_\lambda\). Notice that if we choose a subfamily for \(Q_\lambda\), the associated Bautin matrix is a submatirx of \(M\) (by selecting the columns corresponding to the selected monomials).

We fix in this section a degree $n_\ell$, $\ell\in \{0,\ldots, \ell_d\}$.  

\begin{rem}\label{rem.choix de colonnes}
 We first give hereafter the particular form 
 of the block, denoted $M_{n_\ell}$, appearing in the Bautin matrix $M$ as the contribution of all possible monomials of $X^I Y^{n_\ell -I}$, $I=0, \ldots, n_\ell$, of  fixed degree $ n_\ell$.  In general in our family $Q_\lambda(X,Y)$ only a certain amount of those monomials appears, this is why we next introduce (see Notation \ref{notn.Matrice avec choix de colonnes}) the block $M_{n_\ell,T}$ corresponding to a certain choice $T$ of $\tau\le n_\ell+1$ particular columns
 in $M_{n_\ell}$, that is, to a certain choice of $\tau$ parameters
 $\lambda_{I,n_\ell-I}$ in the family $Q_\lambda$. 
\end{rem}

\begin{rem}\label{rem.multiplicite plus grande que 1}
As already mentioned, up to some translation, we assume that $(P_1,P_2)(0)=0$, and therefore that the multiplicity at the origin of $P_1$ and $P_2$ is bigger than $1$. 
\end{rem}

As before, let \(D = \max(D_1,D_2)\). Let \(\nu_1\) and \(\nu_2\) be the multiplicities at the origin of \(P_1\) and \(P_2\) respectively. Up to permutation of the variables, we assume in what follows that \(\nu_1 \ge \nu_2 \ge 1\), and we will use the notation 
\[\nu = \min(\nu_1,\nu_2) = \nu_2.\]
We write $P_1$ and $P_2$ in the following way
$$P_1(X)=X^{\nu_1} \sum_{i=0}^{D_1-\nu_1}\alpha_i X^i, 
\ \ P_2(X)=X^{\nu_2} \sum_{i=0}^{D_2-\nu_2}a_i X^i,$$
and
$$
\ \ Q(X_1,X_2)=X_1^IX_2^{n_\ell-I},$$ 
for $I=0, \ldots, n_\ell$, where \(\alpha_0\) and \(a_0\) are nonzero. If needed, we will adopt the convention that coefficients above the degree are set to \(0\). 
Denoting 
$N_I=(n_\ell-I) D_2 +ID_1 - n_\ell$ and 
 $ 
       \displaystyle \sum_{i_1+\cdots+i_I+(\nu_1-\nu_2)I+j_1+\cdots+j_{n_\ell-I}=k }$ by $\displaystyle\sum_{*=k }$, for $k=0,\ldots, N_I $,
the parameter $\lambda_{I,n_\ell-I}$ in $f_\lambda$ appears in front of the polynomial 

$$
     P_1(X)^IP_2(X)^{n_\ell-I} =
     \sum_{k=0}^{ID_1+(n_\ell-I)D_2-\nu n_\ell}
    \sum_{*=k}
    \alpha_{i_1}\cdots\alpha_{i_I}
    a_{j_1}\cdots a_{j_{n_\ell-I}}X^{\nu n_\ell+k}.
  $$
Therefore all monomials $Q$ of degree $n_\ell$ contribute in $f_\lambda$ only for $v_k$ with $k=\nu n_\ell, \ldots, D n_\ell$, and this contribution is the coefficient
\begin{equation}\label{eq.vk}
 \sum_{I = 0}^{n_\ell}
     \lambda_{I,n_\ell-I}
        \sum_{*=k-\nu n_\ell} 
        \alpha_{i_1}\cdots\alpha_{i_I} a_{j_1}\cdots a_{j_{n_\ell-I}}.
\end{equation}

It follows that the monomials of $Q$ of degree $n_\ell$ contribute in $M$ for a block $M_{n_\ell}$ with $(n_\ell+1)$ columns and $(n_\ell (D-\nu)+1)$ lines, corresponding to the $(n_\ell+1)$ parameters $\lambda_{0,n_\ell}, \ldots, \lambda_{n_\ell, 0}$ appearing in $v_k(\lambda)$, for $k=\nu n_\ell, \ldots, D n_\ell$.

In this block $M_{n_\ell}$ the $(I+1)$-th column, for $I=0, \ldots, n_\ell$, starts at row $1$ with entries $$\displaystyle\sum_{*=0} 
        \alpha_{i_1}\cdots\alpha_{i_I} a_{j_1}\cdots a_{j_{n_\ell-I}},
        \displaystyle\sum_{*=1} 
        \alpha_{i_1}\cdots\alpha_{i_I} a_{j_1}\cdots a_{j_{n_\ell-I}}, \ \mathrm{etc.}, $$ 
        and ends at row $N_I+1$, with entry 
        $\displaystyle\sum_{*=N_I} 
        \alpha_{i_1}\cdots\alpha_{i_I} a_{j_1}\cdots a_{j_{n_\ell-I}}$.
        \begin{notn}\label{not.Lambda L}
            For $i\ge 0$, we denote by \(C_{\geq i}\) the data 
            $$
         C_{\geq i}=   (\lambda_i,\lambda_{i+1},\ldots,\lambda_{D_1-\nu_1},l_i,l_{i+1},\ldots,l_{D_2-\nu_2})\in \NN^{D_1+D_2-\nu_1-\nu_2-2i+2},
           $$
           and
     $$\Lambda=\lambda_1+\cdots+\lambda_{D_1-\nu_1}, \ \ 
     L=l_1+\cdots+l_{D_2-\nu_2},$$
      
       $$\Lambda_!=\lambda_1!\cdots \lambda_{D_1-\nu_1}!, \ \ L_!=l_1! \cdots l_{D_2-\nu_2}! .$$
\end{notn}
        
        \begin{rem}\label{rem.coeff multinomiaux}
            A monomial $\mathfrak{m}$, in $\alpha_0, \ldots, \alpha_{D_1-\nu_1}, a_0, \ldots, a_{D_2-\nu_2}$, in column \(I+1\in \llbracket1,n_\ell+1\rrbracket \) and at row \(r \in \llbracket1, n_\ell(D-\nu)+1\rrbracket\) in $M_{n_ \ell}$ is of form 
            \begin{equation}\label{eq.monomial}
                \alpha_0^{\lambda_0}\alpha_1^{\lambda_1}\cdots\alpha_{D_1-\nu_1}^{\lambda_{D_1-\nu_1}} a_0^{l_0} a_1^{l_1}\cdots a_{D_2-\nu_2}^{l_{D_2-\nu_2}},
            \end{equation}
        with 
        \begin{equation}
            \label{eq.sum lambdas}
        \lambda_0+\cdots + \lambda_{D_1-\nu_1}=I,  l_0+\cdots + l_{D_2-\nu_2}=n_\ell-I,
        \end{equation}
        and 
        \begin{equation}\label{eq.poids ligne r}
            0\lambda_0+\cdots + (D_1-\nu_1) \lambda_{D_1-\nu_1} +(\nu_1-\nu_2)I+0l_0+\cdots + (D_2-\nu_2)l_{D_2-\nu_2}=r-1.
            \end{equation}
          Notice that the data \(C_{\ge 0}\) fix the monomial $\mathfrak{m}$. However, from~\eqref{eq.sum lambdas}, the data \(C_{\ge 1}\) with the column \(I\) already fix $\mathfrak{m}$. 
         The coefficient in front of such a monomial $\mathfrak{m}$ in column \(I\) is 
            \begin{align*}
                \binom{I} {\lambda_0,\ldots,\lambda_{D_1-\nu_1}} \cdot \binom{n_\ell-I} {l_0,\ldots,l_{D_2-\nu_2}},
            \end{align*} where the notation $ \displaystyle \binom{i}{i_1,\ldots,i_p}$ stands for the multinomial  
            $\displaystyle \frac{i!}{i_1!\cdots i_p!}$.
            Therefore this coefficient is
            \begin{align}\label{eq.Polynome binomial}
                P_{C_{\ge 1},I} & = \frac{(\lambda_0+1)\cdots I}{\Lambda_!} \cdot \frac{(l_0+1)\cdots (n_{\ell}-I)}{L_!} \nonumber\\
                & =\frac{1}{\Lambda_! L_!} 
                \prod_{k=0}^{I-(\lambda_0+1)}(I-k)
                \prod_{k=0}^{n_\ell-I-(l_0+1)} (n_\ell-I-k).
            \end{align}
            
            Notice that, according to Notation \ref{not.Lambda L},
            we have  \(I-\lambda_0=\Lambda\) and \(n_\ell-I-l_0=L\). So for a monomial $\mathfrak{m}$ of given and fixed exponents $C_{\ge 1}$, the coefficient \(P_{C_{\ge 1},I}\) is a polynomial \(P_{C_{\ge 1}}\) in $I$ of degree
            \begin{align}
            \label{eq.deg Pm}
            \deg(P_{C_{\ge 1}}) 
            =\Lambda+L.
             \end{align}
             Furthermore, the leading coefficient of \(P_{C_{\ge 1}}\) is 
             \begin{equation}\label{eq.leading coefficient}
                 \frac{(-1)^L}{\Lambda_! L_!}.
             \end{equation}
            
        \end{rem}
        
        
        \begin{lem}\label{lem.Formule combinatoire}
            Let $c  \in 
            \NN$ and $T=\{t_1,\ldots,t_{c+1}\} \subseteq 
            \CC$, with $t_i<t_j$ for $i<j$.
            \begin{enumerate}
                \item 
            There exists a finite sequence \((K_{T,j})_{j\in T}\) depending only on $T$, such that for any polynomial $P \in \CC[X]$ with \(\deg(P) \le c\), we have
            \[
                \sum_{j\in T} P(j)K_{T,j} = \mathrm{coeff}_{X^c}(P).
            \]
            In particular when $ c > \deg(P)$ we have $\sum_{j\in T} P(j)K_{T,j} = 0$. 
            \item 
            On the other side, for $c\le \deg(P)$, we have 
            \begin{align}\label{eq.P degree tau with c+1 tests}
            \sum_{j\in T} P(j)K_{T,j} & = \mathrm{coeff}_{X^c}(P) 
            + \mathrm{coeff}_{X^{c+1}}(P)\sum_{j=1}^{c+1}t_j^{c+1}K_{T,t_j}  \\
            & +\cdots + \mathrm{coeff}_{X^{\deg(P)}}(P)\sum_{j=1}^{c+1}t_j^{\deg(P)} K_{T,t_j}.  \nonumber
            \end{align}
             \end{enumerate}
        \end{lem}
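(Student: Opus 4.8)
The plan is to read this as a statement about a coefficient-extraction functional built from Lagrange interpolation on the $c+1$ pairwise distinct nodes $t_1,\ldots,t_{c+1}$. First I would introduce, for $i=1,\ldots,c+1$, the Lagrange basis polynomial
$$
L_i(X)=\prod_{k\neq i}\frac{X-t_k}{t_i-t_k},
$$
where $k$ ranges over $\{1,\ldots,c+1\}\setminus\{i\}$, each $L_i$ having degree exactly $c$, and define
$$
K_{T,t_i}=\mathrm{coeff}_{X^c}(L_i)=\frac{1}{\prod_{k\neq i}(t_i-t_k)},
$$
a quantity depending only on the node set $T$. Since the $t_i$ are distinct, every $P\in\CC[X]$ with $\deg(P)\le c$ is recovered by $P(X)=\sum_{i=1}^{c+1}P(t_i)L_i(X)$; taking the coefficient of $X^c$ on both sides gives $\mathrm{coeff}_{X^c}(P)=\sum_{j\in T}P(j)K_{T,j}$, which is the identity of part (1). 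When $\deg(P)<c$ the left-hand side is $0$, yielding the ``in particular'' claim.

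The real content is then the behaviour of the linear functional $\Phi\colon P\mapsto\sum_{j\in T}P(j)K_{T,j}$ on monomials, which I would isolate as the key computation. Applying part (1) to $X^k$ with $0\le k\le c$ gives $\Phi(X^k)=\mathrm{coeff}_{X^c}(X^k)$, which equals $1$ if $k=c$ and $0$ otherwise. Thus $\Phi$ annihilates $1,X,\ldots,X^{c-1}$ and normalises $X^c$ to $1$: this is exactly what makes $\Phi$ select the $X^c$-coefficient and what forces the low-degree terms to disappear.

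For part (2), with $n=\deg(P)\ge c$, I would expand $P(X)=\sum_{k=0}^{n}\mathrm{coeff}_{X^k}(P)\,X^k$ and use linearity:
$$
\Phi(P)=\sum_{k=0}^{n}\mathrm{coeff}_{X^k}(P)\,\Phi(X^k).
$$
By the monomial computation the terms with $k<c$ vanish, the term $k=c$ contributes $\mathrm{coeff}_{X^c}(P)$, and for each $c<k\le n$ the factor $\Phi(X^k)=\sum_{j=1}^{c+1}t_j^{k}K_{T,t_j}$ is left explicit. Collecting these reproduces \eqref{eq.P degree tau with c+1 tests}.

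There is no deep obstacle here: the whole statement is a packaging of Lagrange interpolation. The only point needing care is the degree bookkeeping in part (2)---checking that precisely the monomials of degree below $c$ drop out while $X^c$ is normalised to $1$---but this is settled uniformly by the single identity $\Phi(X^k)=\delta_{k,c}$ for $0\le k\le c$ coming from part (1).
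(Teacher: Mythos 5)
Your proof is correct, and it takes a recognizably different route from the paper's. The paper establishes part (1) by matrix algebra: it writes the evaluation vector $\left(P(t_1),\ldots,P(t_{c+1})\right)$ as $C\cdot V_T$ for the Vandermonde matrix $V_T$, extracts $\mathrm{coeff}_{X^c}(P)$ by multiplying with the last column $V^{-1}_{T,c+1}$ of the inverse, and obtains the explicit formula for $K_{T,t_p}$ via Cramer's rule; for part (2) it then builds an augmented square matrix $\widetilde{V}_T$ of size $\deg(P)+1$, padded with zero columns, and reads off the correction terms from the product $\widetilde{V}_T\cdot\widetilde{V}^{-1}_{T,c+1}$. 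You instead define $K_{T,t_i}$ directly as the leading coefficient of the Lagrange basis polynomial $L_i$ --- which yields the same closed formula $1/\prod_{k\neq i}(t_i-t_k)$, since the last column of the inverse Vandermonde matrix consists exactly of these leading coefficients --- and you get part (1) by taking the $X^c$-coefficient in the interpolation identity $P=\sum_i P(t_i)L_i$. Your part (2) is where the approaches diverge most: rather than the paper's augmented-matrix computation, you evaluate the functional $\Phi(X^k)=\sum_j t_j^k K_{T,t_j}$ on each monomial, note $\Phi(X^k)=\delta_{k,c}$ for $k\le c$ by part (1), and conclude by linearity, leaving the terms of degree above $c$ explicit by definition of $\Phi$. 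This is cleaner and avoids the zero-padding bookkeeping; what the paper's presentation buys is an explicitly matricial formulation consistent with the block manipulations of the Bautin matrix elsewhere in the text. Both arguments use only the distinctness of the nodes $t_i$, so no hypothesis is missing, and your degree bookkeeping in part (2) is settled correctly by the single identity $\Phi(X^k)=\delta_{k,c}$ for $0\le k\le c$.
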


        \begin{proof}  
            Let \(V_T\) be the Vandermonde matrix related to \(T\), that is to say  
            \begin{align*}
                V_T = \begin{pmatrix}
                    1 & 1 & \cdots & 1 \\
                    t_1 & t_2 & \cdots & t_{c+1} \\
                    \vdots & \vdots & & \vdots \\
                    t_1^{c} & t_2^{c} & \cdots & t_{c+1}^{c}.
                \end{pmatrix}
            \end{align*}
            Let us denote by \(v_T \) its determinant. We have
            \[ v_T=\prod_{1\le i< j \le c+1}(t_j-t_i),\]
            Let us first prove $(1)$. For this let \(C\) be the row matrix of coefficients of \(P\), completed at its right with $c-\deg(P)$ zeros. 
            Notice that  
            \begin{align*}
                C \cdot V_T = \left( P(t_1), P(t_2), \ldots, P(t_{c+1}) \right).
            \end{align*}
            
            Let \(V^{-1}_{T,c+1}\) be the last column of the inverse matrix of \(V_T\).  We have
            \begin{align*}
                \text{coeff}_{X^{c}}(P) = C\cdot V_T \cdot V^{-1}_{T,c+1} =  \left( P(t_1), P(t_2), \ldots, P(t_{c+1})\right)\cdot V^{-1}_{T,c+1}.
            \end{align*}
            Consequently, it is sufficient to take for \(K_{T,t_p}\) the \(p^{\text{th}}\) element of \(V^{-1}_{T,c+1}\), which is by Cramer's formula 
            \begin{align}\label{eq.coeff K}
                K_{T,t_p} = \frac{(-1)^{c+p+1}}{v_T}\det(V_{T\setminus\{t_p\}}) = \frac{1}{\prod_{i\neq p} (t_p - t_i)}.
            \end{align}
            This define the sequence $K_{T,t_1}, \ldots, K_{T,t_{c+1}}$, and proves $(1)$. 
            
            To prove $(2)$, assuming $c\le \deg(P)$, we consider 
$\widetilde{V}_T$ the square matrix of size $\deg(P)+1$, defined as the matrix
$V_T$ first completed at its bottom by the matrix
$$
\begin{pmatrix}
t_1^{c+1} & \cdots & t_{c+1}^{c+1} \\
\vdots & & \vdots \\
t_1^{\deg(P)} & \cdots & t_{c+1}^{\deg(P)}
\end{pmatrix},            
$$
and then at its right by the matrix with $\deg(P)+1$ rows and $\deg(P)-c$ columns having only zeros for coeficients.             
Let us also consider  $\widetilde{V}^{-1}_{T,c+1}$ the last column of the inverse matrix of $V_T$, but this time completed at its bottom by $\deg(P)-c$ zeros, in order to obtain a column with $\deg(P)+1$ rows. With this notation we have
            $$
            \widetilde{V}_T\cdot \widetilde{V}_{T,c+1}^{-1}=
            \begin{pmatrix}
            0 \\
            \vdots \\
            0 \\
            1 \\
            \sum_{j=1}^{c+1}t_j^{c+1}K_{T,t_j} \\
            \vdots \\
            \sum_{j=1}^{c+1}t_j^{\deg(P)} K_{T,t_j}
            \end{pmatrix},
            $$
            where the unit appears at row $c+1$. Then the same computation as above, involving $\widetilde{V}_T$ and $\widetilde{V}^{-1}_{T,c+1}$ instead of $ V_T$ and $V^{-1}_{T,c+1}$, shows that 
            \begin{align*}\label{eq.P degree tau with c+1 tests}
            \sum_{j\in T} P(j)K_{T,j}= & \textrm{coeff}_{X^c}(P) 
            + \textrm{coeff}_{X^{c+1}}(P)\sum_{j=1}^{c+1}t_j^{c+1}K_{T,t_j}  \\
            & +\cdots + \textrm{coeff}_{X^{\deg(P)}}(P)\sum_{j=1}^{c+1}t_j^{\deg(P)} K_{T,t_j}.\qedhere
            \end{align*}
         \end{proof}

\begin{rem}\label{rem.formule binomiaux}
 In case $T=\{0,1, \ldots, c\}$ in Lemma \ref{lem.Formule combinatoire}, we have 
 $$K_{T,p}=\frac{(-1)^{c+p}}{c!}\binom{c}{p}.$$ 
 Therefore, for any polynomial $P$ with $\deg(P)\le c$, the following classical formula follows from Lemma \ref{lem.Formule combinatoire} $(1)$ 
 \begin{equation}\label{eq.formule binomiaux}
     \sum_{j=0}^{c}(-1)^j\binom{c}{j}P(j)=c!(-1)^c\mathrm{coeff}_{X^c}(P).
 \end{equation}
\end{rem}
We now generalize formula \eqref{eq.formule binomiaux} to the case of 
multivariate polynomials. 

\begin{lem}\label{lem.Formule combinatoire multivariee}
    Let \(P \in \mathbb{C}[X_1,\ldots,X_k]\) of total degree \(d\). Let \( (w_1,\ldots,w_k) \in \mathbb{N}^k \) such that \( d< \sum_{i=1}^k w_i  \). Then,
    \[
        \sum_{(i_1,\ldots,i_k) \in \llbracket0,w_1\rrbracket\times\cdots\times\llbracket 0,w_k\rrbracket} (-1)^{i_1+\cdots+i_k} \binom{w_1}{i_1} \cdots \binom{w_k}{i_k} P(i_1,\ldots,i_k) =0
    \]
\end{lem}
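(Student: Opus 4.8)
The plan is to reduce the multivariate identity to the univariate vanishing already recorded in formula \eqref{eq.formule binomiaux}, by exploiting that the map sending $P$ to the displayed sum is a linear functional of $P$. Since this functional is linear, it suffices to establish the identity when $P$ is a single monomial $P=X_1^{e_1}\cdots X_k^{e_k}$ of total degree $e_1+\cdots+e_k\le d$; a general $P$ of total degree $d$ is a $\CC$-linear combination of such monomials, and the hypothesis $d<\sum_{t=1}^k w_t$ is inherited by each of them.

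For such a monomial the summand factors across the coordinates, and because the index set is the full product $\llbracket 0,w_1\rrbracket\times\cdots\times\llbracket 0,w_k\rrbracket$, interchanging the finite sums (Fubini) turns the multiple sum into a product of univariate sums:
\[
\sum_{(i_1,\ldots,i_k)} (-1)^{i_1+\cdots+i_k}\binom{w_1}{i_1}\cdots\binom{w_k}{i_k}\, i_1^{e_1}\cdots i_k^{e_k}
= \prod_{t=1}^{k}\left(\sum_{i=0}^{w_t}(-1)^i \binom{w_t}{i}\, i^{e_t}\right).
\]
It then remains to locate a vanishing factor. From $\sum_{t=1}^k e_t\le d<\sum_{t=1}^k w_t$ we obtain $\sum_{t=1}^k (w_t-e_t)>0$, so there is an index $t_0$ with $e_{t_0}<w_{t_0}$. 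Applying formula \eqref{eq.formule binomiaux} to the univariate polynomial $X^{e_{t_0}}$ of degree $e_{t_0}<c:=w_{t_0}$ (equivalently Lemma \ref{lem.Formule combinatoire}$(1)$ with $T=\{0,1,\ldots,w_{t_0}\}$ combined with Remark \ref{rem.formule binomiaux}), the coefficient of $X^{w_{t_0}}$ is $0$, hence $\sum_{i=0}^{w_{t_0}}(-1)^i\binom{w_{t_0}}{i}\, i^{e_{t_0}}=0$. The $t_0$-th factor of the product therefore vanishes, the whole product vanishes, and the monomial case is proved; linearity then gives the lemma.

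The argument is essentially routine once the product structure is in place, so I do not expect a genuine obstacle. The only point demanding care is the degree bookkeeping: one must check that the \emph{strict} inequality $d<\sum_t w_t$ is exactly what forces at least one coordinate $t_0$ with $e_{t_0}<w_{t_0}$ — a monomial in which every $e_t$ reached $w_t$ would have total degree $\sum_t w_t>d$ and is thus excluded from the range of $P$. This is the precise multivariate analogue of the univariate fact that the $w$-th finite difference annihilates polynomials of degree $<w$.
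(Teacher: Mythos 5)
Your proof is correct and coincides with the paper's own argument: both reduce by linearity to a monomial $X_1^{e_1}\cdots X_k^{e_k}$, factor the multiple sum into a product of univariate alternating binomial sums, use $d<\sum_t w_t$ to find an index with $e_{t_0}<w_{t_0}$, and kill that factor via Remark \ref{rem.formule binomiaux}. Your extra remark on the degree bookkeeping is exactly the step the paper performs implicitly, so there is nothing to add.
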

\begin{proof}
    By linearity it is sufficient to prove the formula in the case where \(P\) is a monomial \(X_1^{\gamma_1}\cdots X_k^{\gamma_k}\) of degree at most \(d\). Since \(d < \sum_{i=1}^k w_i\), it means there exists \(l \in \llbracket1,k\rrbracket\) such that \(\gamma_l<w_l\). Therefore
    \begin{align*}
        & \sum_{(i_1,\ldots,i_k) \in \llbracket 0,w_1\rrbracket\times\cdots\times\llbracket 0,w_k\rrbracket} (-1)^{i_1+\cdots+i_k} \binom{w_1}{i_1} \cdots \binom{w_k}{i_k} i_1^{\gamma_1}\cdots i_k^{\gamma_k}
        \\
        = & \prod_{j=1}^k \left( \sum_{i_j \in \llbracket0,w_j\rrbracket} (-1)^{i_j} \binom{w_j}{i_j} i_j^{\gamma_j} \right) =0,
    \end{align*} since the \(l^{\text{th}}\) factor of the product is zero by Remark~\ref{rem.formule binomiaux}.
\end{proof}

        \begin{notn}\label{notn.Matrice avec choix de colonnes}
        For $\tau \le n_\ell+1$ and $T=T_{\ell}=\{t_1,\ldots,t_\tau\}\subset \{1, \ldots, n_\ell+1\}$, with $t_1 < \cdots < t_\tau$,  we consider \(M_{n_\ell,T}\), the submatrix  of $M_{n_\ell}$ obtained from $M_{n_\ell}$ by only keeping the columns indexed by elements of \(T\). 
        And in the same way, we denote by $M_{T_{0}, \ldots, T_{\ell_d}}$ the submatrix of the complete Bautin matrix $M$ where we only keep the blocks \(M_{n_{\ell},T_{\ell}}\), $\ell=0, \ldots, \ell_d$, corresponding to some degrees $n_0, \ldots, n_{\ell_d}=d$, and for each of them, corresponding to a specific choice of columuns $T_{0}, \ldots, T_{\ell_d}$. 
        The matrix \(M_{n_\ell,T}\) has  
        $$r_T=\max_{1 \le i \le \tau}((n_\ell-(t_i-1))D_2+(t_i-1)D_1)- \nu n_\ell+1
        $$ rows and $\tau$ columns.

        With the notation of Lemma \ref{lem.Formule combinatoire} we denote by \(\mathcal{K}_T\) the following upper triangular matrix of size \(\tau\times \tau\)
        \begin{align*}
            (\mathcal{K}_T)_{i,j} = \begin{cases}
            \left(\frac{\alpha_0}{a_0}\right)^{t_j-t_i} \frac{K_{\{t_1,\ldots,t_j\},t_i}}{K_{\{t_1,\ldots,t_j\},t_j}}  & \text{ if }i\leq j\le \tau \\
            0 & \text{ otherwise.}
            \end{cases}
        \end{align*}
        Notice that all diagonal elements of the matrix $\mathcal{K}_T$ are equal to one.
        \end{notn}
        
        \begin{rem}\label{rem:multiplicites differentes}
        In the case $\nu_1\not=\nu_2$ the matrix \(M_{n_{\ell},T}\) is in column echelon form. Indeed, in this case, each column is shifted down by a strictly monotone number of zeros, from one column to the next one, and each shifted column has for upper coefficient a (nonzero) monomial in $a_0$ and $\alpha_0$. In particular the corresponding block $M_{ n_\ell, T}$ has maximal rank~$\tau$. 
        \end{rem}
        
        Following Remark \ref{rem:multiplicites differentes}, in what follows, we only deal with the case \(\nu_1=\nu_2=\nu\). 
        
        \begin{prop}\label{prop.triangulation}
        With the notation above 
        
        \begin{enumerate}
            \item the (upper minor of size $\tau$ of the) matrix \(M_{n_\ell,T} \cdot \mathcal{K}_T\) is lower triangular,
            \item the diagonal coefficients of  (the upper minor of size $\tau$ of) this matrix at row $r\in \llbracket 1, \tau\rrbracket$ is
            $$\frac{\prod_{i<r} (t_r-t_i) }{(r-1)!}  \alpha_0^{t_r-r}a_0^{n_\ell-(t_r+r-2)}(\alpha_1a_0-\alpha_0a_1)^{r-1} .$$
        \end{enumerate}
         
        \end{prop}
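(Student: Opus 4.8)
The plan is to compute the entries of $M_{n_\ell,T}\cdot\mathcal{K}_T$ directly from the monomial expansion of Remark~\ref{rem.coeff multinomiaux}, and to recognize the resulting inner sums through Lemma~\ref{lem.Formule combinatoire}. First I would write out the entry of $M_{n_\ell}$ in row $r$ and column $t_i$ (corresponding to the exponent $I=t_i-1$). Grouping the contributing monomials $\mathfrak{m}$ according to their data $C_{\ge 1}$, and using that for $\nu_1=\nu_2$ the constraint \eqref{eq.poids ligne r} becomes $\sum_{k\ge 1}k\lambda_k+\sum_{k\ge 1}kl_k=r-1$ and therefore does not involve the column, one obtains
\[
(M_{n_\ell})_{r,t_i}=\sum_{C_{\ge 1}}P_{C_{\ge 1}}(t_i-1)\,\alpha_0^{\,t_i-1-\Lambda}\,a_0^{\,n_\ell-t_i+1-L}\,\mathfrak{m}_{C_{\ge 1}},
\]
the sum ranging over the data $C_{\ge 1}$ of weight $r-1$, where $\mathfrak{m}_{C_{\ge 1}}=\alpha_1^{\lambda_1}\cdots a_1^{l_1}\cdots$ gathers the factors of index $\ge 1$ and $P_{C_{\ge 1}}$ is the polynomial of \eqref{eq.Polynome binomial}, of degree $\Lambda+L$ and leading coefficient $(-1)^L/(\Lambda_! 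L_!)$.

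Next I would form the $(r,j)$ entry of the product, namely $\sum_{i=1}^{j}(M_{n_\ell})_{r,t_i}\,(\mathcal{K}_T)_{i,j}$, and use the factor $(\alpha_0/a_0)^{t_j-t_i}$ in $(\mathcal{K}_T)_{i,j}$: it shifts the exponents $t_i-1-\Lambda$ and $n_\ell-t_i+1-L$ to $t_j-1-\Lambda$ and $n_\ell-t_j+1-L$, which no longer depend on $i$. The \textbf{main obstacle} is exactly this bookkeeping — checking that these $\alpha_0,a_0$-exponents become genuinely independent of $i$ (this is what permits the block reduction) and tracking the unit gap $I=t_i-1$ between the column index and the evaluation point. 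Factoring the $i$-independent powers together with $\mathfrak{m}_{C_{\ge 1}}/K_{\{t_1,\dots,t_j\},t_j}$ out of the sum leaves the inner sum $\sum_{i=1}^{j}P_{C_{\ge 1}}(t_i-1)\,K_{\{t_1,\dots,t_j\},t_i}$, which is precisely the quantity controlled by Lemma~\ref{lem.Formule combinatoire} applied to the set $\{t_1,\dots,t_j\}$ (so $c=j-1$) and to the shifted polynomial $\widetilde P(X)=P_{C_{\ge 1}}(X-1)$; the shift changes neither the degree nor the leading coefficient.

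For assertion (1), I observe that every $C_{\ge 1}$ of weight $r-1$ satisfies $\deg P_{C_{\ge 1}}=\Lambda+L\le\sum_{k\ge 1}k\lambda_k+\sum_{k\ge 1}kl_k=r-1$. Hence when $r<j$ we have $\deg\widetilde P\le r-1<j-1=c$, so every inner sum vanishes by Lemma~\ref{lem.Formule combinatoire}(1). This annihilates all entries strictly above the diagonal, proving that the upper $\tau\times\tau$ minor of $M_{n_\ell,T}\cdot\mathcal{K}_T$ is lower triangular.

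For assertion (2), I evaluate on the diagonal $j=r$. Now $c=r-1$ and the inner sum equals $\mathrm{coeff}_{X^{r-1}}(\widetilde P)$, which is nonzero only when $\Lambda+L=r-1$; the inequality above is then an equality, forcing all the weight onto the index-$1$ variables, so only $\lambda_1,l_1$ are nonzero with $\lambda_1+l_1=r-1$, $\mathfrak{m}_{C_{\ge 1}}=\alpha_1^{\lambda_1}a_1^{l_1}$, and $\mathrm{coeff}_{X^{r-1}}(\widetilde P)=(-1)^{l_1}/(\lambda_1!\,l_1!)$. Substituting, using $1/K_{\{t_1,\dots,t_r\},t_r}=\prod_{i<r}(t_r-t_i)$ from \eqref{eq.coeff K}, extracting the common powers $\alpha_0^{\,t_r-r}a_0^{\,n_\ell-(t_r+r-2)}$, and recognizing
\[
\sum_{\lambda_1+l_1=r-1}\binom{r-1}{\lambda_1}(\alpha_1 a_0)^{\lambda_1}(-\alpha_0 a_1)^{l_1}=(\alpha_1 a_0-\alpha_0 a_1)^{r-1}
\]
via the binomial theorem yields exactly the diagonal coefficient claimed in the statement.
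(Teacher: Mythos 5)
Your proposal is correct and takes essentially the same route as the paper's own proof: you group the entries of \(M_{n_\ell,T}\cdot \mathcal{K}_T\) by the monomial data \(C_{\ge 1}\), recognize the inner sums \(\sum_{i} P_{C_{\ge 1}}(t_i-1)K_{\{t_1,\ldots,t_j\},t_i}\) via Lemma~\ref{lem.Formule combinatoire}~(1), obtain triangularity from the degree count \(\Lambda+L\le r-1\) coming from~\eqref{eq.poids ligne r} with \(\nu_1=\nu_2\), and collapse the diagonal by the binomial theorem, exactly as in the paper. The only presentational difference is that you factor out the \(i\)-independent powers \(\alpha_0^{t_j-1-\Lambda}a_0^{n_\ell-t_j+1-L}\) explicitly before invoking the lemma, whereas the paper phrases the same bookkeeping as tracking which monomials \(\mathfrak{m}_i\) are sent to a fixed target monomial \(\mathfrak{m}\).
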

        
        \begin{proof}
       For \(r \in \llbracket 1,\tau \rrbracket \) and \(c\geq r\), we compute the coefficient of the matrix \(M_{n_\ell,T}\cdot\mathcal{K}_T\) at its row $r$ and its column $c$.
       Under the action of the multiplication by \(\mathcal{K}_T\), a monomial  \[\mathfrak{m}_i = \alpha_0^{\lambda_0-t_{c}+t_i}\alpha_1^{\lambda_1}\cdots\alpha_{D_1-\nu}^{\lambda_{D_1-\nu}} a_0^{l_0+t_{c}-t_i} a_1^{l_1}\cdots a_{D_2-\nu}^{l_{D_2-\nu}}\] from row $r$ and a column \(i \leq c\) of the matrix \(M_{n_\ell,T}\) is sent to the monomial 
       \[\mathfrak{m} = \alpha_0^{\lambda_0}\alpha_1^{\lambda_1}\cdots\alpha_{D_1-\nu}^{\lambda_{D_1-\nu}} a_0^{l_0} a_1^{l_1}\cdots a_{D_2-\nu}^{l_{D_2-\nu}}\] in the same row $r$ and the column \(c\) of the matrix \(M_{n_\ell,T}\cdot \mathcal{K}_T\). 
       Consequently, using the notation $C_{\ge 1}=(\lambda_1, \ldots, \lambda_{D_1-\nu}, l_1, \ldots,l_{D_2-\nu})$ and $P_{C_{\ge 1}}$ of Remark \ref{rem.coeff multinomiaux} we have
       \begin{align}\label{eq.coeff After Triang}
           \text{coeff}_{\mathfrak{m}}((M_{n_\ell,T}\cdot \mathcal{K}_T)_{r,c}) & = \sum_{i=1}^{c} \frac{K_{\{t_1,\ldots,t_{c}\},t_i}} {K_{\{t_1,\ldots,t_{c}\},t_{c}}} \text{coeff}_{\mathfrak{m}_i}((M_{n_\ell,T})_{r,i}) \nonumber \\
           & = \frac{1} {K_{\{t_1,\ldots,t_{c}\},t_{c}}}\sum_{i=1}^{c} K_{\{t_1,\ldots,t_{c}\},t_i} P_{C_{\ge 1}}(t_{i}-1).
       \end{align}
    where, by~\eqref{eq.deg Pm} \(P_{C_{\ge 1}}\) is a polynomial of degree \(\sum_{i\geq 1} \lambda_i+l_i\). By~\eqref{eq.poids ligne r} \(\deg(P_{C_{\ge 1}}) \leq r-1 \le c-1\), and \(\deg(P_{C_{\ge 1}})<r-1\) in case \(\sum_{i\geq 2} l_i+\lambda_i \neq 0\). 
      By Lemma~\ref{lem.Formule combinatoire} $(1)$, we deduce from~\eqref{eq.coeff After Triang} that
      \[
        \text{coeff}_{\mathfrak{m}}((M_{n_\ell,T}\cdot \mathcal{K}_T)_{r,c}) = \frac{1} {K_{\{t_1,\ldots,t_{c}\},t_{c}}} \text{coeff}_{X^{c-1}}(P_{C_{\ge 1}}(X-1)).
      \]
By \eqref{eq.leading coefficient},  \(\text{coeff}_{\mathfrak{m}}((M_{n_\ell,T}\cdot \mathcal{K}_T)_{r,c})\) equals 
      \begin{equation*}
          \frac{(-1)^{l_1}}{\lambda_1! l_1! K_{\{t_1,\ldots,t_r\},t_{r}}}
      \end{equation*}
      in case \(c=r=\deg(P_{C_{\ge 1}})+1\), and zero otherwise. This proves in particular statement $(1)$, and shows moreover that for the diagonal coefficients of the matrix \(M_{n_\ell,T}\cdot \mathcal{K}_T\), where $c=r$, only monomials of the form \(\alpha_0^{\lambda_0}\alpha_1^{\lambda_1}a_0^{l_0}a_1^{l_1}\) occur,  each one with coefficient 
      \begin{equation*}
          \frac{(-1)^{l_1}}{(\lambda_1+l_1)! K_{\{t_1,\ldots,t_{r}\},t_{r}}} \binom{\lambda_1+l_1}{l_1}.
      \end{equation*}
    Considering that  for monomials where \(\sum_{i\geq 2} \lambda_i+l_i =0\) we have from~\eqref{eq.sum lambdas} and \eqref{eq.poids ligne r} that $\lambda_0+\lambda_1=t_r-1$, $\lambda_1+l_1=r-1$ and \(l_0+l_1=n_\ell-t_r+1\), the  diagonal coefficient at row $r$ of \(M_{n_\ell,T}\cdot \mathcal{K}_T\) is
    \begin{align*}
        & \sum_{l_1 = 0}^{r-1} \frac{(-1)^{l_1}}{(r-1)! K_{\{t_1,\ldots,t_r\},t_r}} \binom{r-1}{l_1} \alpha_0^{\lambda_0}\alpha_1^{\lambda_1}a_0^{l_0}a_1^{l_1} \\
        = & \frac{1}{(r-1)! K_{\{t_1,\ldots,t_r\},t_r}} \alpha_0^{t_r-r}a_0^{n_\ell-(t_r+r-2)} \sum_{l_1=0}^{r-1} (-1)^{l_1} \binom{r-1 }{l_1} \alpha_0^{l_1}\alpha_1^{r-1-l_1}a_0^{r-1-l_1}a_1^{l_1} \\
        = & \frac{1}{(r-1)! K_{\{t_1,\ldots,t_r\},t_r}}  \alpha_0^{t_r-r}a_0^{n_\ell-(t_r+r-2)}(\alpha_1a_0-\alpha_0a_1)^{r-1} .
    \end{align*}
    This completes the proof of statement $(2)$. 
    \end{proof}
        
\begin{rem}
    Note that in case $T=\{1, \ldots, n_\ell+1\}$ the diagonal coefficient
    at row $r$ of the (upper minor of size $n_\ell+1$ of the) triangular matrix \(M_{n_\ell,T}\cdot \mathcal{K}_T\) is $a_0^{n_\ell-2(r-1)}(\alpha_1a_0-\alpha_0a_1)^{r-1}$, since in this case $t_r=r$ and $K_{\{t_1,\ldots,t_r\},t_r}=1/(r-1)!$. 
\end{rem}

In Proposition \ref{prop.triangulation} above we proved that the matrix $M_{n_\ell, T}\cdot \mathcal{K}_T$ is lower triangular and we computed its diagonal coefficients. Let us now compute the other coefficients of this matrix. For this, in Proposition \ref{prop.triangulation complete} below, we first compute the coefficient of a monomial 
 $\mathfrak{m} = \alpha_0^{\lambda_0}\alpha_1^{\lambda_1}\cdots\alpha_{D_1-\nu}^{\lambda_{D_1-\nu}} a_0^{l_0} a_1^{l_1}\cdots a_{D_2-\nu}^{l_{D_2-\nu}}$
 appearing  at row $r\in \llbracket 1, \tau \rrbracket$ and  column $c \le r-1$ in $M_{n_\ell, T}\cdot \mathcal{K}_T$. 
 Notice that this triangulation provides monomials with negative exponent for \(a_0\), since in $\mathcal{K}_T$ denominators $a_0^{t_j-t_i}$, for $j > i$, appear. So from now, we will work with monomials where we formally allow exponent \(l_0\) to be negative, that is to say, we consider monomials appearing in $M_{n_\ell, T}\cdot \mathcal{K}_T$ as elements of the Laurent ring \(\mathbb{C}[\alpha_0,\alpha_1,\ldots,\alpha_{D_1-\nu},
 a_0^{-1},a_0, a_1,\ldots,a_{D_2-\nu}]\).
 
 Also notice that equations \eqref{eq.sum lambdas} and \eqref{eq.poids ligne r} are satisfied for monomials appearing  
  in $M_{n_\ell, T}$, as well as for monomials virtually appearing in $M_{n_\ell, T}\cdot \mathcal{K}_T$, since multiplication by factors $\left(\dfrac{\alpha_0}{a_0}\right)^{t_j-t_i}$, $j\ge i$,  occurring in $\mathcal{K}_T$ preserves those equations.

\begin{prop}\label{prop.triangulation complete}
With the notation above, at row $r\in \llbracket 1, r_T \rrbracket$ and column $c\le \min(r-1,\tau)$ in $M_{n_\ell, T}\cdot \mathcal{K}_T$, for any exponents 
\[
    (\lambda_0,\lambda_1,\lambda_2,\ldots,\lambda_{D_1-\nu},l_0,l_1,\ldots,l_{D_2-\nu}) \in \mathbb{N}^{D_1-\nu+1}\times\mathbb{Z}\times\mathbb{N}^{D_2-\nu}
\] satisfying~\eqref{eq.poids ligne r}, and satisfying \eqref{eq.sum lambdas}, that is to say
\begin{align*}
    \lambda_0+\Lambda = t_c-1, \  l_0+L = n_{\ell}+1 -t_{c},
\end{align*}
the coefficient of the monomial  
$$\mathfrak{m} = \alpha_0^{\lambda_0}\alpha_1^{\lambda_1}\cdots\alpha_{D_1-\nu}^{\lambda_{D_1-\nu}} a_0^{l_0} a_1^{l_1}\cdots a_{D_2-\nu}^{l_{D_2-\nu}}$$ is $0$ in case $c\ge \Lambda+L+2$, and in case $c\le \Lambda+L+1$, this coefficient is  
\begin{equation}\label{eq.produit sigma ki}
 \sigma_{\Lambda+L-c+1}\cdot k_{c-1}+   \sigma_{\Lambda+L-c}\cdot k_c+\cdots + \sigma_{0} \cdot k_{\Lambda+L}
\end{equation}
where  
$$ k_{c-1}=\frac{1}{K_{\{t_1,\ldots,t_{c}\},t_{c}}}$$ 
and for $p=c,\ldots , \Lambda+L,$
$$ k_p= \frac{\sum_{j=1}^c t_j^p
            K_{\{t_1,\ldots,t_{c}\},t_j}} {K_{\{t_1,\ldots,t_{c}\},t_{c}}},$$  
            depend only on $c$ and of the choice of $T$, and where 
             \begin{equation*}
            \sigma_{\Lambda+L-p}=\frac{(-1)^{\Lambda-p}}{\Lambda_!L_!}\sigma_{\Lambda+L-p}(1,2, \ldots,\Lambda,n_\ell+2-L, \ldots,n_\ell+1),
            \end{equation*}
            with
            $\sigma_k(x_1, \ldots,x_{\Lambda+L})=\displaystyle \sum_{1\le i_1< \ldots< i_k \le \Lambda+L}x_{i_1}\cdots x_{i_k}$ be the $k$-th elementary symmetric polynomial.
            \end{prop}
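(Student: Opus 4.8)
The plan is to resume the computation from equation~\eqref{eq.coeff After Triang} obtained in the proof of Proposition~\ref{prop.triangulation}. That identity was recorded there for $c\ge r$, but its derivation only uses that $\mathcal{K}_T$ is upper triangular, so the $(r,c)$ entry of $M_{n_\ell,T}\cdot\mathcal{K}_T$ combines the entries of $M_{n_\ell,T}$ lying in row $r$ and columns $i\le c$ and the monomial shift is the same; it is therefore insensitive to whether $c\ge r$ or $c\le r-1$. Hence for any monomial $\mathfrak{m}$ compatible with column $c$, i.e. satisfying \eqref{eq.sum lambdas} in the form $\lambda_0+\Lambda=t_c-1$ and $l_0+L=n_\ell+1-t_c$ together with \eqref{eq.poids ligne r}, I keep
\[
\mathrm{coeff}_{\mathfrak{m}}\big((M_{n_\ell,T}\cdot\mathcal{K}_T)_{r,c}\big)=\frac{1}{K_{\{t_1,\ldots,t_c\},t_c}}\sum_{i=1}^{c}K_{\{t_1,\ldots,t_c\},t_i}\,P_{C_{\ge 1}}(t_i-1),
\]
and I apply Lemma~\ref{lem.Formule combinatoire} to the set $\{t_1,\ldots,t_c\}$ (whose threshold is $c-1$) and to the polynomial $\widetilde P(X):=P_{C_{\ge 1}}(X-1)$, of degree $\Lambda+L$ by \eqref{eq.deg Pm}.

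The two ranges in the statement then correspond exactly to the two parts of that lemma. If $c\ge\Lambda+L+2$, then $c-1>\deg(\widetilde P)$, so Lemma~\ref{lem.Formule combinatoire}$(1)$ forces the sum to vanish and the coefficient is $0$, as claimed. If $c\le\Lambda+L+1$, then $c-1\le\deg(\widetilde P)$ and Lemma~\ref{lem.Formule combinatoire}$(2)$ gives, after dividing the auxiliary sums $\sum_{j}t_j^{p}K_{\{t_1,\ldots,t_c\},t_j}$ by $K_{\{t_1,\ldots,t_c\},t_c}$ to recognise the scalars $k_{c-1}$ and $k_p$ of the statement,
\[
\mathrm{coeff}_{\mathfrak{m}}\big((M_{n_\ell,T}\cdot\mathcal{K}_T)_{r,c}\big)=\mathrm{coeff}_{X^{c-1}}(\widetilde P)\,k_{c-1}+\sum_{p=c}^{\Lambda+L}\mathrm{coeff}_{X^{p}}(\widetilde P)\,k_p.
\]
It therefore remains to identify $\mathrm{coeff}_{X^{p}}(\widetilde P)$ with the quantity $\sigma_{\Lambda+L-p}$ defined in the statement, after which the displayed sum is precisely \eqref{eq.produit sigma ki}.

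The core step is to put $P_{C_{\ge 1}}$ into factored form. Starting from \eqref{eq.Polynome binomial} and using $\lambda_0=I-\Lambda$, $l_0=n_\ell-I-L$, the falling factorials $I!/(I-\Lambda)!$ and $(n_\ell-I)!/(n_\ell-I-L)!$ telescope, giving $P_{C_{\ge 1}}(I)=\tfrac{1}{\Lambda_!L_!}\prod_{k=0}^{\Lambda-1}(I-k)\prod_{k=0}^{L-1}(n_\ell-I-k)$. Substituting $I=X-1$ and pulling a sign out of each factor of the second product yields
\[
\widetilde P(X)=\frac{(-1)^L}{\Lambda_!L_!}\prod_{j=1}^{\Lambda}(X-j)\prod_{j=n_\ell+2-L}^{n_\ell+1}(X-j),
\]
a polynomial of degree $\Lambda+L$ whose roots are exactly $1,\ldots,\Lambda,n_\ell+2-L,\ldots,n_\ell+1$. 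Expanding $\prod_{j}(X-x_j)=\sum_{q}(-1)^q\sigma_q(x_1,\ldots,x_{\Lambda+L})X^{\Lambda+L-q}$ and reading the coefficient of $X^p$ (that is $q=\Lambda+L-p$) produces $\mathrm{coeff}_{X^{p}}(\widetilde P)=\tfrac{(-1)^{\Lambda-p}}{\Lambda_!L_!}\sigma_{\Lambda+L-p}(1,\ldots,\Lambda,n_\ell+2-L,\ldots,n_\ell+1)$, which is exactly $\sigma_{\Lambda+L-p}$ as defined, the value at $p=\Lambda+L$ matching the leading coefficient \eqref{eq.leading coefficient}.

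The genuinely delicate part is not the invocation of the lemma, which is immediate, but the sign and root bookkeeping in the factorisation of $\widetilde P$: one must correctly extract the global $(-1)^L$ from the second product and check that the shifted roots land precisely on $\{n_\ell+2-L,\ldots,n_\ell+1\}$, so that the elementary symmetric polynomials are evaluated on the list appearing in the statement. Once this identification is secured, substituting $\mathrm{coeff}_{X^{c-1}}(\widetilde P)=\sigma_{\Lambda+L-c+1}$ and $\mathrm{coeff}_{X^{p}}(\widetilde P)=\sigma_{\Lambda+L-p}$ into the displayed expansion gives \eqref{eq.produit sigma ki} and completes the proof.
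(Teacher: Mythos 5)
Your route is the same as the paper's: resume the expansion \eqref{eq.coeff After Triang}, apply Lemma \ref{lem.Formule combinatoire} over the set $\{t_1,\ldots,t_c\}$ to $\widetilde P(X)=P_{C_{\ge 1}}(X-1)$, and identify the coefficients of $\widetilde P$ with the signed elementary symmetric functions of the roots $1,\ldots,\Lambda,n_\ell+2-L,\ldots,n_\ell+1$; your factorisation and sign bookkeeping are correct and reproduce the paper's \eqref{eq.coefficient degre p de P}. But there is one genuine gap, and it sits exactly at the step you declare ``insensitive'' to the regime $c\le r-1$. The identity
\[
\mathrm{coeff}_{\mathfrak{m}}\big((M_{n_\ell,T}\cdot\mathcal{K}_T)_{r,c}\big)=\frac{1}{K_{\{t_1,\ldots,t_c\},t_c}}\sum_{i=1}^{c}K_{\{t_1,\ldots,t_c\},t_i}\,P_{C_{\ge 1}}(t_i-1)
\]
presupposes that for \emph{every} $i\le c$ the coefficient of the shifted monomial $\mathfrak{m}_i$ (with exponent $\lambda_0-t_c+t_i$ on $\alpha_0$ and $l_0+t_c-t_i$ on $a_0$) in $(M_{n_\ell,T})_{r,i}$ equals $P_{C_{\ge 1}}(t_i-1)$. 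The entries of $M_{n_\ell,T}$ are honest polynomials, so whenever $\lambda_0-t_c+t_i<0$ or $l_0+t_c-t_i<0$ the monomial $\mathfrak{m}_i$ simply does not occur there and its coefficient is $0$ --- and this case genuinely arises in the regime $c\le r-1$, since the statement now allows $l_0\in\ZZ$ to be negative (the Laurent monomials created by $\mathcal{K}_T$). Upper triangularity of $\mathcal{K}_T$ alone does not give the displayed formula; one must check that $P_{C_{\ge 1}}(t_i-1)=0$ at precisely the excluded indices.

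This check is true, and it is the one piece of real work the paper's proof adds beyond Proposition \ref{prop.triangulation}: if $\lambda_0-t_c+t_i<0$ then $t_i-1\le t_c-\lambda_0-2=\Lambda-1$, while if $l_0+t_c-t_i<0$ then $t_i-1\ge l_0+t_c=n_\ell+1-L$; in both cases $t_i-1$ lands on a root of $P_{C_{\ge 1}}$, whose zero set in the relevant range is exactly $\{0,\ldots,\Lambda-1\}\cup\{n_\ell-L+1,\ldots,n_\ell\}$ by the very factorisation you wrote down. So the sum may legitimately be extended over all of $\llbracket 1,c\rrbracket$, which is what validates your starting identity. With that verification inserted, your argument is complete and coincides with the paper's; without it, the proof has a hole at the only place where the two regimes actually differ.
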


\begin{proof}
       We fix a row $r\in \{1, \ldots, \tau\}$, a column $c\le r-1$, and a monomial \(\mathfrak{m} = \alpha_0^{\lambda_0}\cdots \alpha_{D_1-\nu}^{\lambda_{D_1-\nu}} a_0^{l_0} \cdots a_{D_2-\nu}^{l_{D_2-\nu}}\) in $M_{n_\ell, T}\cdot \mathcal{K}_T$ verifying the hypotheses of the proposition.
              
       As already noticed at \eqref{eq.coeff After Triang} in the proof of Proposition \ref{prop.triangulation}, this coefficient comes from the coefficients of monomials $\mathfrak{m}_i$ in \(M_{n_\ell, T}\) which appear at the same row $r$ as $\mathfrak{m}$, at columns $i=\{1, \ldots, c \}$, with the same exponents $\lambda_1,\cdots, \lambda_{D_1-\nu},  l_1,\cdots,l_{D_2-\nu}$ as in $\mathfrak{m}$, but with exponent $\lambda_0-t_c+t_i$ for $\alpha_0$, and exponent $l_0+t_c-t_i $ for $a_0$. Notice that this monomial exists in \(M_{n_\ell, T}\) if and only if \(\lambda_0-t_c+t_i \ge 0\) and \(l_0+t_c-t_i \ge 0\).
       This gives rise to the following expression for the coefficient of $\mathfrak{m}$
       \begin{equation*}
           \text{coeff}_{\mathfrak{m}}= \frac{1} {K_{\{t_1,\ldots,t_{c}\},t_{c}}}
           \sum_{\substack{i\in \llbracket 1,c \rrbracket \text{ st } \\ \lambda_0-t_c+t_i \ge 0 \text{ and } l_0+t_c-t_i  \ge 0}} K_{\{t_1,\ldots,t_{c}\},t_i} P_{C_{\ge 1}}(t_i - 1)
        \end{equation*}
        with $P_{C_{\ge1}}$ the polynomial of degree $\Lambda + L$ defined in Remark \ref{rem.coeff multinomiaux}. 
        Since \(P_{C_{\ge 1}}\) is defined by the expression~\eqref{eq.Polynome binomial}, this polynomial cancels 
       at $0, \ldots, \Lambda-1$ and $n_\ell-L+1, \ldots, n_\ell$.
       Moreover assuming \(\lambda_0-t_c+t_i <0\), it implies that $t_i - 1 \le t_c - \lambda_0 - 2 = \Lambda -1$. Consequently \(P_{C_{\ge 1}}(t_i - 1) = 0\). Similarly, if \(l_0+t_c-t_i < 0\), then $t_i - 1 \ge l_0 + t_c = n_{\ell} - L +1$ which again implies \(P_{C_{\ge 1}}(t_i - 1) = 0\). Consequently, the sum above is unchanged if we sum over all \(\llbracket 1,c \rrbracket\):
 \begin{equation}\label{eq.bis coeff After Triang}
           \text{coeff}_{\mathfrak{m}} = \frac{1} {K_{\{t_1,\ldots,t_{c}\},t_{c}}}
           \sum_{i=1}^{c} K_{\{t_1,\ldots,t_{c}\},t_i} P_{C_{\ge 1}}(t_i - 1). 
       \end{equation}
       
       \smallskip
       \begin{itemize}
           \item[-] 
       In case $c\ge \Lambda + L+2$, by Lemma \ref{lem.Formule combinatoire} $(1)$,  $\mathrm{coeff}_\mathfrak{m}=0$. 
       
       \smallskip
       \item[-] In case $c\le \Lambda + L+1$, by 
      \eqref{eq.P degree tau with c+1 tests} of Lemma \ref{lem.Formule combinatoire} $(2)$, we have 
        \begin{align*}
            \mathrm{coeff}_\mathfrak{m}
            = \frac{1} {K_{\{t_1,\ldots,t_{c}\},t_{c}}} 
            \Big( &
              \mathrm{coeff}_{X^{c-1}}(P_{C_{\ge 1}}(X-1))  \\
            +\  & \mathrm{coeff}_{X^c}(P_{C_{\ge 1}}(X-1))\sum_{j=1}^c t_j^c
            K_{\{t_1,\ldots,t_{c}\},t_j} \\
            +\  & \cdots + \mathrm{coeff}_{X^{\Lambda+L}}(P_{C_{\ge 1}}(X-1))\sum_{j=1}^c t_j^{\Lambda+L} K_{\{t_1,\ldots,t_{c}\},t_j} \Big).
            \end{align*}
       We recall from \eqref{eq.Polynome binomial} that
       \begin{align*}\label{eq.bis Polynome binomial}
                P_{C_{\ge 1}}(X-1) =\frac{(-1)^L}{\Lambda_! L_!} 
                \prod_{k=1}^{\Lambda}(X-k)
                \prod_{k=1}^{L} (X-(n_\ell+2-k)).
            \end{align*}
            It follows that for $p=c-1, \ldots, \Lambda+L$
            \begin{equation}\label{eq.coefficient degre p de P}
            \mathrm{coeff}_{X^p}(P_{C_{\ge 1}}(X-1))=
            \frac{(-1)^{\Lambda-p}}{\Lambda_! L_!}
            \sigma_{\Lambda+L-p}(1,2, \ldots,\Lambda,n_\ell+2-L, \ldots,n_\ell,n_\ell+1). 
            \end{equation}
            \end{itemize}
\end{proof}

\begin{lem}\label{lem.sigma2 polynome a valeurs entieres}
Let $N,R,p\in \NN$ with $p \le R< N$ and 
$f_p^{R,N}:\llbracket 0,R \rrbracket \to \NN$ be the function defined by 
$f_p^{R,N}(n)=\sigma_p(1,2, \ldots, R-n, N-n, \ldots, N)$, with the notation of Proposition \ref{prop.triangulation complete}. There exists
a polynomial $P_p^{R,N}$ of degree $p$ such that $f_p^{R,N}(n)=P_p^{R,N}(n)$, for all $n \in \llbracket 0,R \rrbracket$. 
\end{lem}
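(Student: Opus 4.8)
The plan is to realize $f_p^{R,N}(n)$ as a coefficient of a generating function and to run an induction on $p$ driven by a one-step recurrence in $n$. Concretely, set
\[
G_n(t)=\prod_{k=1}^{R-n}(1+kt)\cdot\prod_{i=0}^{n}\bigl(1+(N-i)t\bigr),
\]
so that, expanding the product, $f_p^{R,N}(n)=\sigma_p(1,\ldots,R-n,N-n,\ldots,N)$ is exactly the coefficient of $t^p$ in $G_n(t)$. Note that $G_n$ has the fixed number $R+1$ of linear factors for every $n\in\llbracket 0,R\rrbracket$, which is what makes $\sigma_p$ meaningful uniformly in $n$. Comparing $G_{n+1}$ with $G_n$, the single factor $(1+(R-n)t)$ is removed and the single factor $(1+(N-n-1)t)$ is added, giving
\[
\bigl(1+(R-n)t\bigr)\,G_{n+1}(t)=\bigl(1+(N-n-1)t\bigr)\,G_n(t)
\]
for $n\in\llbracket 0,R-1\rrbracket$. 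Writing $\sigma_p(n):=f_p^{R,N}(n)$ and extracting the coefficient of $t^p$ on both sides yields the key recurrence
\[
\sigma_p(n+1)-\sigma_p(n)=(N-n-1)\,\sigma_{p-1}(n)-(R-n)\,\sigma_{p-1}(n+1).
\]

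I would then argue by induction on $p$, for fixed $R<N$. The base case $p=0$ is immediate since $\sigma_0(n)=1$ for all $n$, a polynomial of degree $0$. For the inductive step, assume $\sigma_{p-1}(n)=Q(n)$ on $\llbracket 0,R\rrbracket$ for a polynomial $Q$ of degree $\le p-1$ (the hypothesis applies since $p-1\le R<N$). Substituting into the recurrence, the right-hand side becomes $(N-n-1)Q(n)-(R-n)Q(n+1)$, which is a priori of degree $p$ in $n$; the crucial point is that its top-degree terms cancel. Indeed, if $Q$ has leading term $q\,n^{p-1}$, then $(N-n-1)Q(n)$ contributes $-q\,n^{p}$ while $-(R-n)Q(n+1)=(n-R)Q(n+1)$ contributes $+q\,n^{p}$, and these cancel. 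Hence $\sigma_p(n+1)-\sigma_p(n)$ agrees on $\llbracket 0,R-1\rrbracket$ with a polynomial $S$ of degree $\le p-1$.

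Finally I would integrate this discrete difference. Since the difference operator maps the space of polynomials of degree $\le p$ onto that of degree $\le p-1$ with kernel the constants, there is a polynomial $\widetilde S$ of degree $\le p$ with $\widetilde S(n+1)-\widetilde S(n)=S(n)$ and $\widetilde S(0)=\sigma_p(0)$, and a straightforward induction on $n$ shows $\sigma_p(n)=\widetilde S(n)$ for every $n\in\llbracket 0,R\rrbracket$. Setting $P_p^{R,N}=\widetilde S$ then gives a polynomial of degree $\le p$ agreeing with $f_p^{R,N}$ on $\llbracket 0,R\rrbracket$. To pin down that the degree is exactly $p$, I would carry the leading coefficient through the induction: the computation above gives $S$ leading coefficient $q\,(N+p-R-2)$, with $q$ the leading coefficient of $P_{p-1}^{R,N}$, so $P_p^{R,N}$ has leading coefficient $\tfrac1p\,q\,(N+p-R-2)$. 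As $N>R$, all factors $N+j-R-2$ for $1\le j\le p$ are positive once $N\ge R+2$, and a trivial induction then keeps $q\neq0$ at every stage; hence the degree is exactly $p$, the sole boundary degeneracy being $p=1$, $N=R+1$, where $\sigma_1$ is already constant.

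The main obstacle is precisely the top-degree cancellation in the inductive step: verifying that the naive degree $p$ of $(N-n-1)Q(n)-(R-n)Q(n+1)$ drops to $p-1$ is what prevents the degree from inflating to $p+1$ under summation (or, via a cruder route that expands each elementary symmetric function of consecutive integers separately as a polynomial in its length, to $2p$). Everything else is routine bookkeeping: the validity range $n\in\llbracket 0,R-1\rrbracket$ of the recurrence, the standard fact that antidifferencing a polynomial of degree $\le p-1$ produces one of degree $\le p$, and the final leading-coefficient computation.
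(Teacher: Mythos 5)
Your proof is correct, and its skeleton --- induction on $p$, a one-step difference recurrence in $n$, then discrete antidifferentiation --- is the same as the paper's; but the pivotal identity is reached by a genuinely different route, and in a slightly different form. The paper observes that the lists at $n$ and $n+1$ share the $R$ common entries $A=(1,\ldots,R-1-n,\,N-n,\ldots,N)$ and differ only by exchanging $R-n$ for $N-n-1$, so that $\sigma_p(A\cup\{x\})=\sigma_p(A)+x\,\sigma_{p-1}(A)$ yields the one-term recurrence
\[
f_p^{R,N}(n+1)-f_p^{R,N}(n)=(N-R-1)\,f_{p-1}^{R-1,N}(n),
\]
whose right-hand side is of degree $\le p-1$ \emph{automatically}, via an induction in which $R$ decreases along with $p$; no cancellation has to be checked, and the antiderivative is produced explicitly in the Newton basis $\binom{X}{r}$. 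Your generating-function quotient $\bigl(1+(R-n)t\bigr)G_{n+1}(t)=\bigl(1+(N-n-1)t\bigr)G_n(t)$ instead keeps $R$ fixed and produces the two-term recurrence $\sigma_p(n+1)-\sigma_p(n)=(N-n-1)\,\sigma_{p-1}(n)-(R-n)\,\sigma_{p-1}(n+1)$, of apparent degree $p$, so your top-degree cancellation step is genuinely needed --- and you verify it correctly. The two recurrences are in fact equivalent: writing $\sigma_{p-1}(n)=\sigma_{p-1}(A)+(R-n)\sigma_{p-2}(A)$ and $\sigma_{p-1}(n+1)=\sigma_{p-1}(A)+(N-n-1)\sigma_{p-2}(A)$ (with $\sigma_{-1}=0$), the cross terms cancel and your right-hand side collapses to $(N-R-1)\sigma_{p-1}(A)$, which is the paper's identity. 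What your route buys: the induction runs on $p$ alone with $R,N$ fixed, and you carry the leading coefficient $\frac{1}{p!}\prod_{j=1}^{p}(N+j-R-2)$ through it, so you get degree \emph{exactly} $p$ whenever $N\ge R+2$ --- a point the paper leaves implicit, since its construction (like yours) only guarantees degree $\le p$ in general. One small imprecision at the boundary: when $N=R+1$ the list $(1,\ldots,R-n,\,N-n,\ldots,N)$ equals $(1,\ldots,R+1)$ for every $n$, so $f_p^{R,N}$ is constant for \emph{all} $p\ge 1$, not only for $p=1$ as you suggest; this is harmless here, because the only use of the lemma (in the proof of Proposition \ref{prop.sous-diagonales nulles}) requires degree at most $p$.
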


\begin{proof} The proof is by induction on $p \ge 0$. Notice first 
that $f_0^{R,N} \equiv 1$ is a constant function. 
 Then for $p\ge 1$ a direct computation shows that for all $n\in \llbracket 0,R-1 \rrbracket$ 
 $$
 f_p^{R,N}(n+1)-f_p^{R,N}(n)=(N-R-1)\sigma_{p-1}(1, 2, \ldots, R-1-n,N-n, \ldots, N).
 $$
 Since by induction hypothesis the right hand side of the above equation equals the polynomial $(N-R-1)P_{p-1}^{R-1,N}$ of degree $p-1$ on $\llbracket 0,R-1 \rrbracket$, so does $f_p^{R,N}(n+1)-f_p^{R,N}(n)$. 
 
 We just need to show this implies there exists a polynomial \(P_p^{R,N}\) which equals \(f_p^{R,N}\) on $\llbracket 0,R \rrbracket$. To do that, we follow for instance \cite[Proposition I.7.3 (b)]{Hart}. Let us consider the family of Newton polynomials \((\binom{X}{r})_{r\in \NN}\) defined by \(\binom{X}{r} = \frac{1}{r!}\prod_{i=0}^{r-1}(X-i)\). We can easily notice that \((\binom{X}{r})_{r \le d}\) is a basis of polynomials of degree at most \(d\). Moreover, a simple calculation shows that for \(r>0\), \(\binom{X+1}{r}-\binom{X}{r} = \binom{X}{r-1}\).
 Writing $(N-R-1)P_{p-1}^{R-1,N}$ in this basis, we obtain the coefficients \((c_{p-1},\ldots,c_0)\)
 \begin{equation*}
     (N-R-1)P_{p-1}^{R-1,N}(X) = c_{p-1}\binom{X}{p-1}+c_{p-2}\binom{X}{p-2}+\cdots+c_0.
 \end{equation*}
 Let us define
 \begin{equation*}
     P_{p}^{R,N}(X) = c_{p-1}\binom{X}{p}+c_{p-2}\binom{X}{p-1}+\cdots+c_0\binom{X}{1} + K
 \end{equation*}
 where the constant \(K\) is chosen to get  \(P_{p}^{R,N}(R) =  f_{p}^{R,N}(R)\).
 It follows that for any $n\in \llbracket 0,R-1 \rrbracket$, \(P_{p}^{R,N}(n+1) -  P_{p}^{R,N}(n) = f_{p}^{R,N}(n+1) - f_{p}^{R,N}(n)\). An easy induction finishes the proof.
\end{proof}


\section{Triangulation of Bautin blocks - The singular case}\label{section 3}
In this section we continue to focus on the case \(\nu=\nu_1=\nu_2\). We assume that the upper $\tau\times \tau$ minor of the lower triangular matrix $ M_{n_\ell,T}\cdot \mathcal{K}_T$ has determinant zero, where $T=\{t_1, \ldots, t_\tau\}\subset \{1, \ldots, n_\ell+1\}$, with $t_1 < \cdots < t_\tau$,  is a choice of $\tau$ columns in the Bautin block $M_{n_\ell}$, according to Notation \ref{notn.Matrice avec choix de colonnes}.
By Proposition \ref{prop.triangulation}, the condition that the determinant of the upper \(\tau\times\tau\) minor of $M_{n_\ell,T}\cdot \mathcal{K}_T$ is zero  means that $\alpha_0a_1 = \alpha_1a_0$. 
There exists $\mu\in \CC\setminus\{0\}$ such that 
$\alpha_0=\mu a_0, \alpha_1=\mu a_1$,  
and we denote by \(k\) the largest integer in \(\llbracket 0,\min(D_1,D_2)- \nu \rrbracket \) satisfying 
\begin{equation}\label{eq.singular condition}
 (\alpha_0,\ldots,\alpha_k)=\mu\cdot (a_0,\ldots,a_k),
\end{equation}
assuming in this section that $k\ge 1$. 
In the lower triangular matrix $M_{n_\ell,T}\cdot \mathcal{K}_T$ we fix a row $r\in \{1, \ldots, r_T\}$ and a column $c\le r-1$. 
The entry in $M_{n_\ell,T}\cdot \mathcal{K}_T$ at row $r$ and column $c$ is a polynomial  in 
       $\alpha_0, \ldots, \alpha_{D_1-\nu}$,$a_0,\ldots, $ $a_{D_2-\nu}$ (actually a Laurent polynomial, since $a_0$ may have negative power $l_0$ after multiplication of $M_{n_\ell,T}$ by $\mathcal{K}_T$), which is a linear combination of monomials of the form
        \[\mathfrak{m} = \alpha_0^{\lambda_0}\alpha_1^{\lambda_1}\cdots\alpha_{D_1-\nu}^{\lambda_{D_1-\nu}} a_0^{l_0} a_1^{l_1}\cdots a_{D_2-\nu}^{l_{D_2-\nu}}\]
               with, according to \eqref{eq.sum lambdas},
        \begin{equation}\label{eq.tris bis sum lambdas}
        \lambda_0+\Lambda=t_c-1, \ \  l_0+L=n_\ell-t_c+1.
        \end{equation}
        
        If, according to condition \eqref{eq.singular condition}, we replace for \(i \in \llbracket 0,k\rrbracket\), $\alpha_i$ by $\mu a_i$ in the monomial $\mathfrak{m}$ we get a monomial of the form 
         \begin{equation}\label{eq.tris m tilde}
         \widetilde{\mathfrak{m}} =\mu^{\lambda_0+\ldots+\lambda_k}\cdot a_0^{\lambda_0+l_0} \cdots a_k^{\lambda_k+l_k}\cdot  \alpha_{k+1}^{\lambda_{k+1}}\cdots\alpha_{D_1-\nu}^{\lambda_{D_1-\nu}} a_{k+1}^{l_{k+1}}  \cdots a_{D_2-\nu}^{l_{D_2-\nu}}.
         \end{equation}
     
        \begin{notn}\label{not.tris C Lambda L W}
         For $i=0, \ldots, D-\nu$, let us denote
        $$\Lambda_{\ge i}=\lambda_i + \ldots + \lambda_{D_1-\nu}, \ \ L_{\ge i}=l_i + \ldots + l_{D_2-\nu},$$
        $$W_{\ge i}= i\lambda_i + \ldots + (D_1-\nu)\lambda_{D_1-\nu}+ il_i + \ldots + (D_2-\nu) l_{D_2-\nu}.$$
        Notice that up to now we have used $\Lambda$ instead of $\Lambda_{\ge 1}$ and $L$ instead of $L_{\ge 1}$ (see Notation \ref{not.Lambda L}); In what follows we use those two notations.
        \end{notn}

        \begin{rem}\label{rem.tris m tilde fixé par C}
          Using Notation \ref{not.Lambda L}, in case $C_{\ge k+1}$ is fixed, so is  $\lambda_0+\ldots+\lambda_k$, since 
           \begin{equation}\label{eq.tris lambda0 lambda1}
         \lambda_0+\ldots+\lambda_k=t_c-1-\Lambda_{\ge k+1}.
        \end{equation}

          On the other hand, since 
          \(\lambda_0+l_0 = n_{\ell}-L_{\geq 1}-\Lambda_{\geq 1}\),
          monomials $\mathfrak{m}$ contribute, after the substitution of $\alpha_i$ by $\mu a_i$ for \(i \in \llbracket0,k\rrbracket\) in $\mathfrak{m}$, to a monomial $\widetilde{\mathfrak{m}}$ defined only by the data $T$, $c$, $(w_1,\ldots,w_k)$, and $C_{\ge k+1}$, where for all \(i \geq 0\), \(w_i=\lambda_i+l_i\), and according to~\eqref{eq.poids ligne r}, the sequence
          $(w_1,\ldots,w_k)$ verifies 
          $$
         w_1+2w_2+\cdots+kw_k = r-1-W_{\geq k+1}.
          $$ 
          \end{rem}
          
          Also notice 
          that  
          $$
          w_0 = \lambda_0+l_0= n_{\ell} -w_1-\cdots-w_k-\Lambda_{\geq k+1}-L_{\ge k+1}.$$

          \begin{rem}\label{rem.positivity}
              By Proposition \ref{prop.triangulation complete},  in case $\Lambda+L-c+1<0$, the coefficient of $\mathfrak{m}$ is zero, so we may assume that $\Lambda+L-c+1\ge 0$.
          \end{rem}

        \begin{rem}\label{rem.m tilde}
         By Proposition \ref{prop.triangulation} $(2)$ the entry at row $1$ and column $1$ of $M_{n_\ell,T}\cdot \mathcal{K}_T$
         is 
         \begin{equation}\label{eq.coefficient en haut a gauche} 
         \alpha_0^{t_1-1}a_0^{n_\ell-(t_1-1)}=\mu^{t_1-1}a_0^{n_\ell},
         \end{equation}
        and by \eqref{eq.tris m tilde}
        a monomial $\widetilde{\mathfrak{m}}$ at row $r$ and column $c$ in $ M_{n_\ell,T}\cdot \mathcal{K}_T$ is of form 
        \begin{equation}\label{eq.tris m tilde quand c=r-1}
            \widetilde{\mathfrak{m}}=\mu^{\lambda_0+\ldots+\lambda_k}  a_0^{w_0} \cdots a_k^{w_k} \alpha_{k+1}^{\lambda_{k+1}} \cdots \alpha_{D_1-\nu}^{\lambda_{D_1-\nu}} a_{k+1}^{l_{k+1}} \cdots a_{D_2-\nu}^{l_{D_2-\nu}}, 
        \end{equation}
        coming, after the use of \eqref{eq.singular condition}, from monomials (at same row $r$ and same column $c$) $\mathfrak{m}=\alpha_0^{\lambda_0} \cdots \alpha_{D_1-\nu}^{\lambda_{D_1-\nu}} a_0^{l_0} \cdots a_{D_2-\nu}^{l_{D_2-\nu}}$, where \(w_i = \lambda_i+l_i\), and where 
        \begin{equation}\label{eq.tris encadrement l1}
            (l_1,\ldots,l_k) \in \llbracket0,w_1\rrbracket\times\cdots\times\llbracket0,w_k\rrbracket.
        \end{equation}
    \end{rem}

\begin{prop}\label{prop.sous-diagonales nulles}
With the notation above, the entry at column $c$ and row 
$r\in \llbracket 1, (k+1)c-k \rrbracket $ 
in the matrix $M_{n_\ell,T}\cdot \mathcal{K}_T$  is zero for
$1\le r<  (k+1)c-k$,  and for $r=(k+1)c-k$ this entry is the nonzero constant
\begin{equation}\label{eq.coefficients jets egaux a ordre k}
\frac{\mu^{t_c-c}}{(c-1)! K_{\{t_1,\ldots,t_c\},t_c}} a_0^{n_{\ell}-c+1} (\alpha_{k+1}-\mu a_{k+1})^{c-1}.
\end{equation}
\end{prop}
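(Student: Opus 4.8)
The plan is to package the whole $(r,c)$ entry into a single coefficient extraction on a generating series in $X$, instead of tracking individual monomials. Set $A(X)=\sum_{i=0}^{D_1-\nu}\alpha_iX^i$ and $a(X)=\sum_{j=0}^{D_2-\nu}a_jX^j$, so that $P_1=X^\nu A$ and $P_2=X^\nu a$. Directly from the description of $M_{n_\ell}$ preceding \eqref{eq.vk}, in the case $\nu_1=\nu_2=\nu$ the entry of $M_{n_\ell}$ at row $r$ and column $I+1$ is $[X^{r-1}]\,A(X)^{I}a(X)^{n_\ell-I}$. Keeping the columns of $T$, i.e. taking $I=t_i-1$, and multiplying by the matrix $\mathcal{K}_T$ of Notation \ref{notn.Matrice avec choix de colonnes}, the entry of $M_{n_\ell,T}\cdot\mathcal{K}_T$ at row $r$ and column $c$ becomes
\[
\frac{1}{K_{\{t_1,\ldots,t_c\},t_c}}\,[X^{r-1}]\sum_{i=1}^{c}K_{\{t_1,\ldots,t_c\},t_i}\left(\frac{\alpha_0}{a_0}\right)^{t_c-t_i}A(X)^{t_i-1}a(X)^{n_\ell-t_i+1}.
\]

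I would then isolate the dependence on $i$. Since $A(0)=\alpha_0\neq0$, the series $A^{-1}$ is well defined, and setting $h(X)=\dfrac{a_0A(X)}{\alpha_0a(X)}$ (a power series with $h(0)=1$) one checks the factorisation
\[
\left(\frac{\alpha_0}{a_0}\right)^{t_c-t_i}A^{t_i-1}a^{n_\ell-t_i+1}=C(X)\,h(X)^{t_i},\qquad C(X)=\left(\frac{\alpha_0}{a_0}\right)^{t_c}A(X)^{-1}a(X)^{n_\ell+1},
\]
with $C$ independent of $i$ and $C(0)=\alpha_0^{t_c-1}a_0^{n_\ell+1-t_c}=\mu^{t_c-1}a_0^{n_\ell}$. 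The singular hypothesis \eqref{eq.singular condition} gives $A=\mu a+X^{k+1}\rho$ with $\rho(0)=\alpha_{k+1}-\mu a_{k+1}\neq0$ by maximality of $k$, hence $h=1+X^{k+1}\phi$ where $\phi=\rho/(\mu a)$ is a power series with $\phi(0)=\dfrac{\alpha_{k+1}-\mu a_{k+1}}{\mu a_0}\neq0$.

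The key point is now a degree count feeding Lemma \ref{lem.Formule combinatoire}$(1)$. Expanding binomially, $h(X)^{t}=\sum_{m\geq0}\binom{t}{m}X^{m(k+1)}\phi(X)^{m}$, so $[X^{N}]h^{t}=\sum_{m\le N/(k+1)}\binom{t}{m}\,[X^{N-m(k+1)}]\phi^{m}$ is a polynomial in $t$ of degree at most $\lfloor N/(k+1)\rfloor$. Applying Lemma \ref{lem.Formule combinatoire}$(1)$ with the $c$-element set $\{t_1,\ldots,t_c\}$ to $P(t)=[X^{N}]h^{t}$, the series $G(X)=\sum_{i=1}^{c}K_{\{t_1,\ldots,t_c\},t_i}h^{t_i}$ satisfies $[X^{N}]G=\mathrm{coeff}_{t^{c-1}}\big([X^{N}]h^{t}\big)$ whenever $\deg_tP\le c-1$; this vanishes for $N<(c-1)(k+1)$ (then $\deg_tP\le c-2$), while for $N=(c-1)(k+1)$ only the $m=c-1$ term survives, giving $[X^{N}]G=\phi(0)^{c-1}/(c-1)!$. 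Thus $G$ has a zero of order exactly $(c-1)(k+1)$ at $X=0$, and since $C$ is a unit the entry $[X^{r-1}](CG)/K_{\{t_1,\ldots,t_c\},t_c}$ vanishes for $r-1<(c-1)(k+1)$, i.e. for $r<(k+1)c-k$; at $r=(k+1)c-k$, i.e. $r-1=(c-1)(k+1)$, it equals
\[
\frac{C(0)}{K_{\{t_1,\ldots,t_c\},t_c}}\cdot\frac{\phi(0)^{c-1}}{(c-1)!}
=\frac{\mu^{t_c-c}}{(c-1)!\,K_{\{t_1,\ldots,t_c\},t_c}}\,a_0^{n_\ell-c+1}(\alpha_{k+1}-\mu a_{k+1})^{c-1},
\]
which is exactly \eqref{eq.coefficients jets egaux a ordre k}.

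I expect the main obstacle to be bookkeeping rather than conceptual: one must genuinely work in the power-series (Laurent) ring, since the factors $(\alpha_0/a_0)^{t_c-t_i}$ produce negative powers of $a_0$, and verify that $C$ is a unit with $C(0)\neq0$; and one must check that the leading contribution at $N=(c-1)(k+1)$ comes solely from $m=c-1$ with $[X^0]\phi^{c-1}=\phi(0)^{c-1}$, so that no smaller $m$ can raise the $t$-degree. Everything else collapses to the single univariate identity of Lemma \ref{lem.Formule combinatoire}$(1)$, which here replaces the more laborious monomial-by-monomial summation that would otherwise invoke the multivariate cancellation of Lemma \ref{lem.Formule combinatoire multivariee}.
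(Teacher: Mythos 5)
Your proof is correct, and it takes a genuinely different route from the paper's. The paper argues monomial by monomial: it fixes a target monomial $\widetilde{\mathfrak{m}}$ obtained after the substitution $\alpha_i\mapsto\mu a_i$, $i\le k$, sums the explicit coefficients of all its preimages $\mathfrak{m}$ parameterized by $(l_1,\ldots,l_k)\in\llbracket0,w_1\rrbracket\times\cdots\times\llbracket0,w_k\rrbracket$ as given by Proposition \ref{prop.triangulation complete}, uses Lemma \ref{lem.sigma2 polynome a valeurs entieres} to show that the symmetric-function factors are polynomials of controlled degree in $l_1+\cdots+l_k$, and kills the alternating box-sum with the multivariate Lemma \ref{lem.Formule combinatoire multivariee}; the surviving case $w_1=\cdots=w_k=0$ is then resummed binomially into $(\alpha_{k+1}-\mu a_{k+1})^{c-1}$. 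You instead write the whole $(r,c)$ entry as $\frac{1}{K_{\{t_1,\ldots,t_c\},t_c}}[X^{r-1}]\bigl(C\,G\bigr)$ with $G=\sum_{i=1}^{c}K_{\{t_1,\ldots,t_c\},t_i}h^{t_i}$, $h=1+X^{k+1}\phi$, observe that $[X^N]h^t=\sum_{m\le N/(k+1)}\binom{t}{m}[X^{N-m(k+1)}]\phi^m$ is a polynomial in $t$ of degree at most $\lfloor N/(k+1)\rfloor$, and finish with a single application of the univariate Lemma \ref{lem.Formule combinatoire}$(1)$ to the $c$-point set $\{t_1,\ldots,t_c\}$. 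I verified the details: the identification of the block entries as Taylor coefficients $[X^{r-1}]A^{t_i-1}a^{\,n_\ell-t_i+1}$ is valid on every row of the block (past the polynomial's degree the coefficient is genuinely zero, matching the matrix); the factorisation with $C=(\alpha_0/a_0)^{t_c}A^{-1}a^{\,n_\ell+1}$ is exact; $C(0)=\mu^{t_c-1}a_0^{n_\ell}$ and $\phi(0)=(\alpha_{k+1}-\mu a_{k+1})/(\mu a_0)$; and at $N=(c-1)(k+1)$ only $m=c-1$ contributes to the coefficient of $t^{c-1}$ since $\deg_t\binom{t}{m}=m$, which reproduces \eqref{eq.coefficients jets egaux a ordre k} exactly. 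Your route bypasses Proposition \ref{prop.triangulation complete}, Lemma \ref{lem.sigma2 polynome a valeurs entieres} and Lemma \ref{lem.Formule combinatoire multivariee} altogether, derives the vanishing and the exact nonzero value from one degree count, and recovers the singular-case lower-triangularity as a byproduct (for $c>r$ one has $r<(k+1)c-k$); what the paper's heavier machinery buys in exchange is the explicit closed form \eqref{eq.produit sigma ki} for \emph{all} sub-diagonal entries, not only those in the vanishing range. One point you should make explicit rather than leave implicit in ``by maximality of $k$'': the nonvanishing of $\rho(0)=\alpha_{k+1}-\mu a_{k+1}$ holds with the paper's convention that coefficients beyond the degree are zero, \emph{except} in the fully proportional case $P_1=\mu P_2$, which the paper excludes here and treats separately in Remark \ref{rem.proportionalite}.
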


\begin{proof}
 A monomial $\widetilde{\mathfrak{m}}$ as in \eqref{eq.tris m tilde quand c=r-1} of
      Remark \ref{rem.m tilde} depends only on the choice of $w_1,\ldots,w_k$ and $C_{\ge {k+1}}$, as pointed out in Remark \ref{rem.tris m tilde fixé par C}. So we fix a choice of data $w_1,\ldots,w_k,C_{\ge k+1}$ with \(w_1+2w_2+\cdots+kw_k=r-1-W_{\ge k+1}\). 
      On the other hand, the monomial $\widetilde{\mathfrak{m}}$ comes from all monomials $\mathfrak{m}$ as in Remark \ref{rem.m tilde}, parameterized by $(l_1,\ldots,l_k)\in\llbracket0,w_1\rrbracket\times\cdots\times\llbracket0,w_k\rrbracket$, and Proposition \ref{prop.triangulation complete} gives the coefficient $\mathrm{coeff}_\mathfrak{m}$ of $\mathfrak{m}$ at row $r$ and column $c$ in  $M_{n_\ell,T}\cdot \mathcal{K}_T$.

      Two cases may then happen: 
      \begin{enumerate}
        \item We first consider the case \(w_1=\cdots=w_k=0\), that is to say that the monomial \(\widetilde{\mathfrak{m}}\) is given by a monomial $\mathfrak{m}$, where \(\lambda_1=\cdots=\lambda_k=l_1=\cdots=l_k=0\). It implies in particular that
        \begin{equation}\label{eq.first inequality}
            \Lambda+L =\Lambda_{\ge k+1} + L_{\ge k+1} \le W_{\ge k+1}/(k+1) =(r-1)/(k+1).
            \end{equation} So, since $c\ge (r+k)/(k+1)$,
            \begin{equation}\label{eq.second inequality}
            0 \le \Lambda+L-c+1 \le \frac{r-1}{k+1} -\frac{r+k}{k+1}+1=0.
            \end{equation}
            Consequently inequalities \eqref{eq.first inequality} and \eqref{eq.second inequality} are equalities. In particular \(\lambda_{k+1}+l_{k+1}=c-1\) and for all \(i\ge 1\), if \(i \neq k+1\) we have \(\lambda_i=l_i=0\). Moreover there is a unique monomial \(\mathfrak{m}\) which gives \(\widetilde{\mathfrak{m}}\). Its coefficient is
            by \eqref{eq.produit sigma ki} of Proposition \ref{prop.triangulation complete}
        \[
            \mu^{\lambda_0} \sigma_0k_{c-1} = \mu^{t_c-1-\lambda_{k+1}}\frac{(-1)^{l_{k+1}}}{l_{k+1}!\lambda_{k+1}!K_{\{t_1,\ldots,t_c\},t_c}}.
        \]
        There are exactly \(c\) such monomials \(\widetilde{\mathfrak{m}}\) (we can choose the values of \(\lambda_{k+1}\) and \(l_{k+1}\)). The sum of these monomials is
        \begin{align*}
            & \sum_{l_{k+1}=0}^{c-1} \mu^{t_c-c+l_{k+1}}\frac{(-1)^{l_{k+1}}}{l_{k+1}!(c-1-l_{k+1})!K_{\{t_1,\ldots,t_c\},t_c}} {a_0}^{n_{\ell}-c+1} \alpha_{k+1}^{c-1-l_{k+1}} {a_{k+1}}^{l_{k+1}}
            \\
            = & \frac{\mu^{t_c-c}}{(c-1)! K_{\{t_1,\ldots,t_c\},t_c}} a_0^{n_{\ell}-c+1} \sum_{l_{k+1}=0}^{c-1} \binom{c-1}{l_{k+1}} \alpha_{k+1}^{c-1-l_{k+1}} (-\mu a_{k+1})^{l_{k+1}} \\
            = & \frac{\mu^{t_c-c}}{(c-1)! K_{\{t_1,\ldots,t_c\},t_c}} a_0^{n_{\ell}-c+1} (\alpha_{k+1}-\mu a_{k+1})^{c-1}.
        \end{align*}
      
        \item Otherwise, some \(w_i\) is not zero where \(1 \le i \le k\). By fixing the monomial \(\widetilde{\mathfrak{m}}\), we already fixed \(C_{\ge k+1}\), while \((l_1,\ldots,l_k)\in\llbracket0,w_1\rrbracket\times\cdots\times\llbracket0,w_k\rrbracket\). The coefficient of a monomial \(\mathfrak{m}\) which gives $\widetilde{\mathfrak{m}}$ is given by \eqref{eq.produit sigma ki} of Proposition~\ref{prop.triangulation complete}, and is
        \[
            \mathrm{coeff}_\mathfrak{m} = \sigma_{\Lambda+L-c+1}k_{c-1} + \cdots + \sigma_0 k_{\Lambda+L}
        \]
        where 
        \[
            k_p = \frac{K_p}{K_{\{t_1,\ldots,t_c\},t_c}}
        \]
        and 
        \[
            \sigma_p = \frac{(-1)^{p+L}}{\Lambda_!L_!}\sigma_p(1,2,\ldots,\Lambda,n_{\ell}+2-L,\ldots,n_{\ell}+1)
        \]
        with 
        \[
            K_{c-1}=1 \text{ \rm and }  K_{p} = \sum_{j=1}^c t_j^p K_{\{t_1,\ldots,t_c\},t_j} \ \mathrm{ when }\ p\ge c.
        \]
        
        Since \((l_1,\ldots,l_k)\) is going through the box \(\mathcal{B} = \llbracket0,w_1\rrbracket\times\cdots\times\llbracket0,w_k\rrbracket\), $\mathrm{coeff}_{\widetilde{\mathfrak{m}}}$ is
        \begin{align}\label{eq.Somme sur les l}
         \widetilde{K}  \sum_{l\in \mathcal{B}} (-1)^{l_1+\cdots+l_k} & \binom{w_1}{l_1} \cdots \binom{w_k}{l_k} \sum_{p=0}^{\Lambda+L-c+1} (-1)^p \\
         \nonumber
         & \times K_{\Lambda+L-p}\sigma_p(1,\ldots,\Lambda,n_{\ell}+2-L,\ldots,n_{\ell}+1),
        \end{align}
        with 
        $$ 
         \widetilde{K}=\frac{(-1)^{L_{\ge k+1}}}{
            \Lambda_{\ge k+1!} L_{\ge k+1!} (w_1!)\cdots(w_k!) K_{\{t_1,\ldots,t_c\},t_c}}.
        $$
        By Lemma~\ref{lem.sigma2 polynome a valeurs entieres}, for any \(i\), \(\sigma_i(1,\ldots,\Lambda,n_{\ell}+2-L,\ldots,n_{\ell}+1)\) is a polynomial in \(l_1+\cdots+l_k\) of degree at most \(i\). So, 
        \[\sum_{p=0}^{\Lambda+L-c+1} (-1)^pK_{\Lambda+L-p}\sigma_p(1,\ldots,\Lambda,n_{\ell}+2-L,\ldots,n_{\ell}+1)\]
        is a \(k\)-variate polynomial in \((l_1,\ldots,l_k)\) of total degree at most \(\Lambda+L-c+1\). 
        
        Finally, we have 
        \begin{align*}
            (k+1)(c-1) & \geq r-1 = W_{\ge 0} \\
            & \geq w_1+2w_2+\cdots+kw_k + (k+1)(\Lambda_{\ge k+1}+L_{\ge k+1}).
        \end{align*}
        It implies
        \[
            \Lambda+L -(c-1) \le \frac{kw_1+(k-1)w_2+\cdots+w_k}{k+1} < w_1+\cdots+w_k
        \]
        since we assumed that one of the \(w_i\) is nonzero.
        By Lemma~\ref{lem.Formule combinatoire multivariee}, the expression~\eqref{eq.Somme sur les l} above cancels.\qedhere
        \end{enumerate}
\end{proof}

\begin{rem}\label{rem.proportionalite}
In case $P_1=\mu P_2$, all columns of \(M_{n_{\ell},T}\) are linearly dependent. In the first row, only the entry of the first column is nonzero. Therefore, all columns but the first one in $M_{n_\ell,T}\cdot \mathcal{K}_T$ have zero for entries.  
\end{rem}

\section{Lacunary curves}\label{section 4}

From now on, we consider that the polynomial  $Q_\lambda=\sum_{i=1}^m \lambda_iQ_i$ is lacunary, in the sense that the family of parameters $\lambda=(\lambda_1, \ldots, \lambda_m)$ has zeros at prescribed places, and of course we then omit those zero parameters. 
We still denote by $d$ the degree of $Q_\lambda$, and by $m(=m_d\le d+1)$ the number of (nonzero) parameters of the family $\lambda$. 
We still denote by 
$n_0 < n_1 < \cdots < n_\ell< \ldots < n_{\ell_d}=d$ the sequence of $\ell_d$ degrees of the monomials $Q_i$ appearing in $Q_\lambda$. In addition, we return to the general case \(\nu_1 \ge \nu_2 = \nu\).

We have already considered in the previous sections some general choice of coefficients appearing in $Q_\lambda$, by selecting, for $\ell\in \llbracket 0,\ell_d\rrbracket$,  $\tau_\ell$
columns $T_\ell=\{t_{\ell,1}, \ldots, t_{\ell,\tau_\ell}\}\subset \{ 1, \ldots, n_\ell+1\}$, with $t_{\ell,1}< \cdots < t_{\ell,\tau_\ell}$, in the block $M_{n_\ell}$, corresponding to the specific monomials 
$X^{t_{\ell,i}-1}Y^{n_\ell-(t_{\ell,i}-1)}$, $i=1, \ldots, \tau_\ell$, of degree $n_\ell$, a choice giving rise to the notation $M_{n_\ell,T_\ell}$ for the blocks of the Bautin matrix $M_{T_{0}, \ldots, T_{\ell_d}}$ (see Notation \ref{notn.Matrice avec choix de colonnes}).

\begin{defn}\label{def.lacunarity diagram} With this notation we say that the data 
$$
(d,\ell_d, n_0, \ldots, n_{\ell_d},\tau_0, \ldots, \tau_{\ell_d},t_{0,1},\ldots t_{\ell_d,\tau_{\ell_d}}).
$$ 
are the \emph{lacunarity diagram of $Q_\lambda$}. 
\end{defn}

\begin{notn}\label{not.parties homogenes}
For any $\ell\in \llbracket 0,\ell_d\rrbracket$, we denote by $Q_{\lambda,n_\ell}$ the homogeneous part of degree $n_\ell$ of the polynomial $Q_\lambda$. With the notation above we have
$$
Q_{\lambda, n_\ell}(X,Y)=\sum_{i=1}^{\tau_\ell}
\lambda_{\ell,i}
X^{t_{\ell,i}-1}Y^{n_\ell-(t_{\ell,i}-1)}, 
$$
and $\ell_d+1$ is the number of homogeneous polynomials in $Q_\lambda$.
The number $m$ of parameters of the family $Q_\lambda$ is 
$$m
=\tau_0+\cdots+\tau_{\ell_d} 
\le (n_0+1)+\cdots + (n_{\ell_d}+1).$$ 
We set for $\ell\in \llbracket 0,\ell_d\rrbracket$, 
\begin{align*}
    \bar{\nu}_\ell =& (t_{\ell,1}-1)\nu_1+(n_\ell-(t_{\ell,1}-1))\nu_2, \\
  \mathrm{ and } \ \widetilde{\nu}_\ell =& (t_{\ell,\tau_\ell}-1)\nu_1+(n_\ell-(t_{\ell,\tau_\ell}-1))\nu_2.
\end{align*}
\end{notn}

The first lacunarity condition we consider is a condition insuring that the Bautin blocks $M_{n_\ell,T}$ do not encounter each other at some line in the Bautin matrix $M_{T_0,\ldots,T_{\ell_d}}$. Since, for $\ell\in \llbracket 0,\ell_d\rrbracket$, the block $M_{n_\ell,T_\ell}$ might contribute in $f_\lambda$ at least to degree $\nu n_\ell$ and at most to degree $n_\ell D$,  a first convenient  condition is the following arithmetic progression for our degrees $n_0, \ldots, n_{\ell_d}$
\begin{equation}\tag{$\mathcal{L}_1$}\label{eq.Lacunarity Condition 1}
\forall \ell\in \llbracket 0,\ell_d-1\rrbracket, \ \ \nu n_{\ell+1}>n_\ell D. 
\end{equation}

More accurately, with Notation \ref{not.parties homogenes}, for $\ell\in \llbracket 0,\ell_d\rrbracket$, the block $M_{n_\ell,T_\ell}$ contributes in $f_\lambda$ at least to degree $\bar{\nu}_\ell$.

Moreover we allow that two blocks \(M_{n_{\ell_1},T}\) and \(M_{n_{\ell_2},T}\) (with \(\ell_1 < \ell_2\)) encounter at a same row \(r\) if the rows above \(r\) in \(M_{n_{\ell_1},T}\) are already of full rank. Consequently the sharper, but more involved,  conditions insuring that our blocks do no intersect, are
\begin{align}\tag{$\mathcal{L}_{2a}$}\label{eq.Lacunarity Condition 1bis}
\text{in the case }\nu_1 = \nu_2,\quad &
\forall \ell\in \llbracket 0,\ell_d-1\rrbracket, \ \ 
\nu n_{\ell+1} 
>
\nu n_\ell +(k+1) (\tau_{\ell}-1),
\\
\tag{$\mathcal{L}_{2b}$}\label{eq.Lacunarity Condition 1bis with mult}
\text{in the case }\nu_1\neq\nu_2,\quad &
\forall \ell\in \llbracket 0,\ell_d-1\rrbracket, \ \ 
\bar{\nu}_{\ell+1} 
>
\widetilde{\nu}_{\ell}
\end{align}
where we recall that \(k\) is defined as the largest integer such that \((\alpha_0,\ldots,\alpha_k) = \mu \cdot (a_0,\ldots,a_k)\) for some \(\mu \in \CC\setminus\{0\}\).

We can even refine hypotheses~\eqref{eq.Lacunarity Condition 1bis} and~\eqref{eq.Lacunarity Condition 1bis with mult}, in the sense that we can allow more polynomials, under the following refined lacunarity conditions \eqref{eq.Lacunarity Condition 1ter} and \eqref{eq.Lacunarity Condition 1ter with mult}. 

Indeed in the case \(\nu_1=\nu_2\), after triangulation (obtained by a columns operation) of a Bautin block $M_{n_\ell,T_{\ell}}\cdot \mathcal{K}_{T_{\ell}}$, the leading entries of the column echelon form appear on rows \(\{ \bar{\nu}_{\ell} + i\cdot (k+1) \mid 0\leq i < \tau_{\ell}  \}\). Consequently, the following hypothesis is sufficient to ensure the different selected rows of each block do not overlap
\begin{equation}\tag{$\mathcal{L}_{3\text{a}}$}\label{eq.Lacunarity Condition 1ter}
\forall (\ell,\ell^\prime) \in \llbracket 0,\ell_d\rrbracket^2, \ \ 
\ell > \ell^\prime \text{ and } \bar{\nu}_{\ell} = \bar{\nu}_{\ell^\prime} [\text{mod } k+1] \implies 
\bar{\nu}_{\ell} > \bar{\nu}_{\ell^\prime} + (k+1)(\tau_{\ell^\prime}-1). 
\end{equation}

In the other case, \(\nu_1\neq\nu_2\), the rows which are linearly independent of the previous ones (see Remark \ref{rem:multiplicites differentes}) of a block $M_{n_\ell,T_{\ell}}$ are the rows 
\(
    \{ (t_{\ell,i}-1)\nu_1 + (n_\ell-(t_{\ell,i}-1))\nu_2 \mid i \in \llbracket 1,\tau_{\ell} \rrbracket \}
\). Given \(Q_{\lambda}\) and a pair of polynomials \((P_1,P_2)\), one can directly check whether two such rows of different blocks coincide, and the proof still works if it is not the case. 
Noticing that the gap between two such rows of a same block is always a multiple of \(\lvert \nu_1-\nu_2\rvert\), one can still set this sufficient hypothesis
\begin{equation}\tag{$\mathcal{L}_{3\text{b}}$}\label{eq.Lacunarity Condition 1ter with mult}
\forall (\ell,\ell^\prime) \in \llbracket 0,\ell_d\rrbracket^2, \ \ 
(\ell > \ell^\prime \text{ and } \bar{\nu}_{\ell} = \bar{\nu}_{\ell^\prime} [\text{mod } \lvert \nu_1-\nu_2\rvert]) \implies 
(\bar{\nu}_{\ell} > \widetilde{\nu}_{\ell^\prime}). 
\end{equation}

\begin{rem}\label{rem.zeros et parties homogenes}
 Under condition \eqref{eq.Lacunarity Condition 1}, \eqref{eq.Lacunarity Condition 1bis}, \eqref{eq.Lacunarity Condition 1bis with mult}, \eqref{eq.Lacunarity Condition 1ter}, or \eqref{eq.Lacunarity Condition 1ter with mult}, we have $\lambda\in L_b$, that is to say $Q_\lambda(P_1,P_2)=0$, if and only if for any $\ell\in \llbracket 0,\ell_d\rrbracket$,  $Q_{\lambda,n_\ell}(P_1,P_2)=~0$. 
\end{rem}
\section{Zeros of families of lacunary curves}\label{Section 5}

Using the notation of Section \ref{section 2},
let us consider a basis $(v_{i_1}, \ldots, v_{i_\sigma})$ with $S = \{i_1, \ldots, i_\sigma\} \subseteq \{0,\ldots, b\}$, of the space of linear forms vanishing on $L_b$, chosen among the elements of the family
$(v_0,\ldots, v_b)$. 

For any form $v_j(\lambda)=\sum_{i=1}^mc_j^i\lambda_i$, 
since $v_j\in \mathrm{Span}((v_k)_{k\in S})$, 
there exist $\gamma_j^{i_1}, \ldots, \gamma_j^{i_\sigma}\in \CC$ and \(\widetilde{c}\) such that  
\begin{equation}\label{eq:curve4}
v_j(\lambda)=\sum_{k \in S} \gamma_j^{k} v_{k}(\lambda), \ \ \vert\gamma_j^k\vert\leq \widetilde{c}\, \Vert v_j \Vert.
\end{equation}
where $\Vert v_j \Vert =\max_{i=1, \ldots, m}\vert c_j^i \vert$.

\begin{rem}\label{rem.sigma et dim Lb}
We have $\sigma=\dim(\mathrm{Span}((v_k)_{k\in S})=m-\dim(L_b)$. 
\end{rem}

\begin{notn}\label{no:c}
We denote by $c=c(f_\lambda,S)>0$ the minimum of the constants $\widetilde{c}$ satisfying (\ref{eq:curve4}).
\end{notn}

With the notation of Section \ref{section 1}, in  this section we estimate from above the number of zeros of $f_\lambda$ in a disc, following \cite{Roy.Yom} and \cite{CoYo1}. To fix notation, and for the convenience of the reader, we recall and adapt the method to our situation. 
\begin{defn}[Bernstein class]\label{def.Berstein1}
Let $r>0$, $\alpha\in ]0,1[$, $K>0$ and $f$ be a nonzero analytic function on a neighbourhood of the closed disc $\bar{D}_r$. We say that $f$ belongs to the \emph{Bernstein class $B^1_{r,\alpha,K}$} when 
$$
\dfrac{\max_{\bar{D}_r}{\vert f \vert }}{\max_{\bar{D}_{\alpha r}}{\vert f \vert }}\le K.
$$
\end{defn}

Now let the family $Q_i, i=1, \ldots, m$ be given, and let the Bautin index $b$ as in Section \ref{section 1}. 
For any \(r < R\) and any \(\alpha \in \left]0,1\right[\), we denote by \(\mu\) the maximum of \(\lvert f_\lambda\rvert\) over the closed disk \(\bar{D}_{\alpha r}\). By~\eqref{eq:curve2.1} we get for all \(i\), \(\lvert v_i\rvert \leq \mu / (\alpha r)^i\). Hence, following the proof of Theorem 2.1.3 in~\cite{Roy.Yom}, by~\eqref{eq:curve4} and using again~\eqref{eq:curve2.1},
\begin{align}
    \label{eq.maj gamma}
    \max_{\bar{D}_{r}}(\lvert f_\lambda \rvert)
    & \le \sum_{i=0}^b \lvert v_i(\lambda)\rvert r^i + \sum_{j \ge b+1} \lvert v_j(\lambda)\rvert r^j \\
    \nonumber
    & \le \mu\sum_{i=0}^b \frac{r^i}{(\alpha r)^i} + \sum_{j \ge b+1}  \sum_{k \in S} \lvert\gamma_j^k\rvert \lvert v_k(\lambda)\rvert R^j \left(\frac{r}{R}\right)^j \\
    \nonumber
    & \le \frac{\mu}{\alpha^b}\frac{1-\alpha^{b+1}}{1-\alpha} + \sum_{j \ge b+1} \sum_{k \in S} c \left(R^j \max_{i=1,\ldots,m} \lvert c_j^i\rvert\right) \lvert v_k(\lambda)\rvert \left(\frac{r}{R}\right)^j \\
    \nonumber
    & \le \frac{\mu}{\alpha^b}\frac{1-\alpha^{b+1}}{1-\alpha} + B_R c \sum_{k \in S} \lvert v_k(\lambda)\rvert \sum_{j \ge b+1}  \left(\frac{r}{R}\right)^j \\
    \nonumber
    & \le \frac{\mu}{\alpha^b}\frac{1-\alpha^{b+1}}{1-\alpha} + B_R c \frac{(r/R)^{b+1}}{1-r/R} \mu \sum_{k \in S} \frac{1}{(\alpha r)^k} \\
    \nonumber
    & \le \frac{\mu}{\alpha^b} \left( 1 + \frac{\alpha(1-\alpha^b)}{1-\alpha} + \frac{B_R c r}{R^{b+1}(1-r/R)} \sum_{k\in S}(\alpha r)^{b-k} \right).
\end{align}

Let us introduce \(\bar{\sigma}_{\rho} = \sum_{k\in S}\rho^{b-k}\). We have 
\begin{equation}
    \label{eq:ub_sigma}
    \bar{\sigma}_{\rho} \le
    \begin{cases}
    \frac{1-\rho^\sigma}{1-\rho} & \text{if }\rho < 1,
    \\
    \sigma & \text{if }\rho = 1,
    \\
    \rho^{b+1}\frac{1-1/\rho^\sigma}{\rho - 1} & \text{if }\rho >1.
    \end{cases}
\end{equation}

It follows that for $\lambda\not\in L_b$, $f_\lambda\in B^1_{r,\alpha,K}$ with 
\begin{equation}\label{eq.K}
K=\frac{1}{\alpha^b}\left(1+\frac{\alpha(1-\alpha^b)}{1-\alpha}+
\frac{B_R c r \bar{\sigma}_{\alpha r}}{R^{b+1}(1-r/R)} \right).
\end{equation}

We can now give in Theorem \ref{thm:zroes.baut} (in the same way as \cite[Lemma 2.2.3]{Roy.Yom}) a bound from above on the number of zeros of $f_\lambda$ in $\bar{D}_{R/4}$, using the following Jensen–Nevanlinna-type inequality 
(see \cite[Lemma 1]{VdP}, or \cite{Yako} for  other proofs of 
 statements comparable to \eqref{num.radius} of Theorem \ref{thm:zroes.baut} below).
\begin{thm}\label{t.VanderPoorten}
Let $f_\lambda\in B^1_{r,\alpha, K}$, then the number of zeros of $f_\lambda$ in $\bar{D}_{\alpha r}$ is at most $\dfrac{\log K}{\log((1+\alpha^2)/2\alpha)}$.
\end{thm}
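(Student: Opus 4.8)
The plan is to prove the bound through the \emph{Poisson–Jensen formula}, the natural tool relating the growth of $|f_\lambda|$ across two concentric circles to the location of its zeros. Since $f_\lambda$ is analytic and not identically zero on a neighbourhood of $\bar{D}_r$, the quantity $\mathcal{M}(\rho)=\max_{\bar{D}_\rho}|f_\lambda|$ is positive for every $\rho\le r$; we may assume $f_\lambda$ is nonconstant, as otherwise it has no zeros and there is nothing to prove. By the maximum modulus principle there is then a point $z_0$ with $|z_0|=\alpha r$ at which $|f_\lambda(z_0)|=\mathcal{M}(\alpha r)$, so in particular $f_\lambda(z_0)\ne 0$. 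The first step is to apply the Poisson–Jensen formula on $\bar{D}_r$ at $z_0$, namely
\[
\log|f_\lambda(z_0)|=\frac{1}{2\pi}\int_0^{2\pi} P(z_0,\theta)\,\log|f_\lambda(re^{i\theta})|\,d\theta+\sum_{a}\log|B_a(z_0)|,
\]
where the sum runs over the zeros $a$ of $f_\lambda$ in $D_r$ (counted with multiplicity), $P(z_0,\cdot)\ge 0$ denotes the Poisson kernel of $\bar{D}_r$, and $B_a(z)=\frac{r(z-a)}{r^2-\bar a z}$ is the Blaschke factor, which satisfies $|B_a|\le 1$ on $\bar{D}_r$.

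I would then exploit the two standard features of this identity. On the one hand $P(z_0,\cdot)\ge 0$ and $\frac{1}{2\pi}\int_0^{2\pi}P(z_0,\theta)\,d\theta=1$, so the integral is at most $\log\mathcal{M}(r)$; on the other hand every term $\log|B_a(z_0)|$ is nonpositive. Rearranging and using $|f_\lambda(z_0)|=\mathcal{M}(\alpha r)$ yields
\[
\sum_{a}\log\frac{1}{|B_a(z_0)|}\le \log\mathcal{M}(r)-\log\mathcal{M}(\alpha r)\le \log K,
\]
the last inequality being precisely the membership $f_\lambda\in B^1_{r,\alpha,K}$. Since every term on the left is nonnegative, discarding all but the $N$ terms arising from the zeros lying in $\bar{D}_{\alpha r}$ only decreases the left-hand side, so $\sum_{a\in\bar{D}_{\alpha r}}\log\frac{1}{|B_a(z_0)|}\le \log K$.

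The crux is then a purely geometric estimate: for a zero $a$ with $|a|\le \alpha r$ and for $|z_0|=\alpha r$, the quantity $|B_a(z_0)|$ — the pseudohyperbolic distance in $D_r$ between two points of $\bar{D}_{\alpha r}$ — is largest when the two points are antipodal, $z_0=\alpha r$ and $a=-\alpha r$, where it equals $\frac{2\alpha}{1+\alpha^2}$. I would establish this by normalising $a=\alpha r\,w$ with $|w|\le 1$ (and $z_0=\alpha r$ after a rotation), which reduces the claim to $\frac{|1-w|}{|1-\alpha^2\bar w|}\le \frac{2}{1+\alpha^2}$; a one-variable optimisation then settles it, the maximum over $|w|=t$ being attained at $w=-t$ and being increasing in $t\le 1$. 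Consequently $\log\frac{1}{|B_a(z_0)|}\ge \log\frac{1+\alpha^2}{2\alpha}$ for every zero $a\in\bar{D}_{\alpha r}$, and combining this with the previous display gives $N\log\frac{1+\alpha^2}{2\alpha}\le \log K$, which is exactly the assertion.

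The main obstacle is this last optimisation, as it is precisely where the constant $\frac{1+\alpha^2}{2\alpha}$ of the statement is produced; everything else is a routine application of Poisson–Jensen. A minor technical point to address is the possible presence of zeros of $f_\lambda$ on the bounding circle $|z|=r$: these contribute Blaschke factors of modulus $1$, hence nothing to the sum, while $\log|f_\lambda|$ remains integrable on that circle, so the Poisson–Jensen formula continues to hold by the usual limiting argument and the bound $\log\mathcal{M}(r)$ on the integral is unaffected.
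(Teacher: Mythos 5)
Your proof is correct and complete. Note that the paper gives no internal proof of Theorem~\ref{t.VanderPoorten}: it is quoted from \cite[Lemma 1]{VdP} (see also \cite{Yako}), so the comparison is with van der Poorten's classical argument. That argument forms the quotient $g=f_\lambda\cdot\prod_a B_a^{-1}$ over the $N$ zeros $a\in\bar{D}_{\alpha r}$, observes that $\vert g\vert=\vert f_\lambda\vert$ on $\vert z\vert=r$, and applies the maximum modulus principle to $g$ at the point of $\bar{D}_{\alpha r}$ where $\vert f_\lambda\vert$ is maximal; you instead keep all the zeros, write the Poisson--Jensen identity at that same extremal point $z_0$, and then discard the nonpositive Blaschke terms of the zeros outside $\bar{D}_{\alpha r}$ while bounding the kernel average by $\log\max_{\bar{D}_r}\vert f_\lambda\vert$. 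These are two packagings of the same mechanism, and both hinge on the identical extremal computation, which you carry out correctly: after rotating $z_0$ to $\alpha r$ and writing $a=\alpha r w$ one gets $\vert B_a(z_0)\vert=\alpha\vert 1-w\vert/\vert 1-\alpha^2\bar{w}\vert$, and for fixed $\vert w\vert=t$ the squared ratio $(1-2\,\mathrm{Re}\,w+t^2)/(1-2\alpha^2\,\mathrm{Re}\,w+\alpha^4t^2)$ is decreasing in $\mathrm{Re}\,w$ (its derivative has sign $-(1-\alpha^2)(1-\alpha^2t^2)<0$), hence maximal at $w=-t$ where it equals $(1+t)/(1+\alpha^2t)$, which increases to $2/(1+\alpha^2)$ at $t=1$; this yields $\log(1/\vert B_a(z_0)\vert)\ge\log\bigl((1+\alpha^2)/2\alpha\bigr)$ for every zero in $\bar{D}_{\alpha r}$, with multiplicities handled automatically by the Poisson--Jensen sum. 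Your treatment of possible zeros on $\vert z\vert=r$ is also sound, and in fact simpler than you suggest: for $\vert a\vert=r$ the factor $B_a$ is a unimodular constant, so such zeros contribute nothing even before any limiting argument. The one advantage of the quotient-function route is that it avoids invoking Poisson--Jensen (only the maximum principle is needed); the advantage of yours is that it makes transparent exactly which terms are being thrown away.
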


\begin{thm}\label{thm:zroes.baut}
Zeros being counted with multiplicity, we have the following uniform bounds.
\begin{enumerate}
\item The maximal number of zeros of  $f_{\lambda}$, with respect to $\lambda\not \in L_b$, in the disk $\bar{D}_{\frac{R}{4}}$, is at most
$$ 5b\log2+5\log(2R^b + B_R c \bar{\sigma}_{R/4}) - 5b\log(R).$$

\item\label{num.radius} The maximal number of zeros of $f_\lambda$, with respect to $\lambda\not \in L_b$, is at most $b$
in $\bar{D}_\rho$, where
$$
\rho \le \frac{R}{8^b\max(2,B_R c \bar{\sigma}_{R/4}/R^b) }.
$$

\end{enumerate}
\end{thm}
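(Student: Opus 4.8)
The plan is to feed the Bernstein-class membership recorded in \eqref{eq.K} into the Jensen--Nevanlinna estimate of Theorem~\ref{t.VanderPoorten}, tuning the two free parameters $r<R$ and $\alpha\in\,]0,1[$ differently for the two statements. In both cases $\lambda\notin L_b$ guarantees that $f_\lambda$ is a nonzero analytic function, so that $f_\lambda\in B^1_{r,\alpha,K}$ and Theorem~\ref{t.VanderPoorten} indeed applies, the count there being that of all zeros in $\bar D_{\alpha r}$ with multiplicity.

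For statement $(1)$ I would simply take $r=R/2$ and $\alpha=1/2$, so that $\alpha r=R/4$ is exactly the radius in which we count. Substituting these values into \eqref{eq.K} gives
\[
K=2^{b}\Bigl(2-2^{-b}+\tfrac{B_R c\,\bar{\sigma}_{R/4}}{R^{b}}\Bigr)\le \frac{2^{b}\bigl(2R^{b}+B_R c\,\bar{\sigma}_{R/4}\bigr)}{R^{b}},
\]
whence $\log K\le b\log 2+\log\!\bigl(2R^{b}+B_R c\,\bar{\sigma}_{R/4}\bigr)-b\log R$. Since with $\alpha=1/2$ the denominator in Theorem~\ref{t.VanderPoorten} is $\log((1+\alpha^2)/(2\alpha))=\log(5/4)$ and $1/\log(5/4)<5$, multiplying by $5$ yields exactly the bound announced in $(1)$.

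For statement $(2)$ the point is that the Jensen--Nevanlinna count must be pushed down to the forced value $b$, which can only happen when the inner ratio $\alpha$ is allowed to shrink with $\rho$. I would again fix $r=R/2$ and now set $\alpha=2\rho/R$, so that $\alpha r=\rho$; for $\rho\le R/4$ one has $\alpha\le 1/2$ and, by monotonicity of $\bar{\sigma}_{\rho}=\sum_{k\in S}\rho^{\,b-k}$ in $\rho$ (each exponent $b-k\ge 0$), one has $\bar{\sigma}_{\rho}\le\bar{\sigma}_{R/4}$. The bracket in \eqref{eq.K} is then at most $2+B_R c\,\bar{\sigma}_{R/4}/R^{b}\le 2A$, with $A=\max(2,B_R c\,\bar{\sigma}_{R/4}/R^{b})$, so $K\le 2A/\alpha^{b}$. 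Plugging this into Theorem~\ref{t.VanderPoorten}, the number of zeros in $\bar D_\rho$ is at most $\log(2A/\alpha^{b})/\log((1+\alpha^2)/(2\alpha))$, an expression that tends to $b$ as $\alpha\to 0$, since its numerator grows like $b\log(1/\alpha)$ while its denominator grows like $\log(1/\alpha)$. Solving the resulting inequality for this ratio to be $<b+1$, and translating back through $\alpha=2\rho/R$, isolates a sufficient threshold of the form $\rho\le R/(8^{b}A)$; below it the integer number of zeros is forced to be at most $b$.

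The routine parts are the substitutions into \eqref{eq.K} and the two elementary estimates $1/\log(5/4)<5$ and $\bar{\sigma}_\rho\le\bar{\sigma}_{R/4}$. The genuine difficulty is confined to statement $(2)$: one must let $\alpha$ decrease at precisely the right rate so that the competition between the $\alpha^{-b}$ blow-up of $K$ and the $\log(1/\alpha)$ growth of the denominator $\log((1+\alpha^2)/(2\alpha))$ makes the Jensen--Nevanlinna bound descend continuously to $b$, and then to extract from this balance a clean closed-form radius carrying the stated constant $8^{b}$.
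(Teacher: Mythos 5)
Your part (1) is correct and coincides with the paper's own proof: the same choices $r=R/2$, $\alpha=1/2$ in \eqref{eq.K}, the same estimate $K\le 2^b\bigl(2+B_R c\,\bar{\sigma}_{R/4}/R^b\bigr)$ (your exact formula with the $-2^{-b}$ term is right, and the bracket bound $2+B_Rc\,\bar{\sigma}_{R/4}/R^b\le 2A$ as well as the monotonicity $\bar{\sigma}_\rho\le\bar{\sigma}_{R/4}$, valid since $S\subseteq\{0,\dots,b\}$ makes all exponents $b-k\ge 0$, are fine), and the same use of Theorem~\ref{t.VanderPoorten} with $1/\log(5/4)<5$, noting $\log K>0$.

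Part (2) is where you diverge from the paper and where there is a genuine gap. The paper does not derive this bound from \eqref{eq.K} at all: it simply cites \cite[Lemma 2.2.3]{Roy.Yom}. Your plan --- let $\alpha$ shrink with $\rho$ and balance the $\alpha^{-b}$ blow-up of $K$ against the $\log(1/\alpha)$ growth of $\log\bigl((1+\alpha^2)/(2\alpha)\bigr)$ --- is the right qualitative mechanism (the ratio does tend to $b$ from above as $\alpha\to 0$), but the decisive quantitative step, ``solving the resulting inequality \ldots\ isolates a sufficient threshold of the form $\rho\le R/(8^bA)$'', is asserted rather than carried out, and with the estimates you set up it does not yield the stated constant. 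Carrying it out: with $r=R/2$, $\alpha=2\rho/R$, $K\le 2A/\alpha^b$ and $\log\frac{1+\alpha^2}{2\alpha}\ge\log\frac{1}{2\alpha}$, the requirement that the Jensen--Nevanlinna ratio be $<b+1$ reads $\log(2A)+b\log(1/\alpha)<(b+1)\bigl(\log(1/\alpha)-\log 2\bigr)$, i.e.\ $\alpha<2^{-(b+2)}/A$, i.e.\ $\rho<R/(2^{b+3}A)$. Since $8^b\ge 2^{b+3}$ precisely when $b\ge 2$, your route certifies the theorem only for $b\ge 2$ (where it is in fact slightly stronger); for $b\in\{0,1\}$ the claimed radius $R/(8^bA)$ is strictly larger than what the argument reaches, and this is not mere slack: at $b=1$, $A=2$, $\rho=R/(8A)$ one has $\alpha=1/(4A)=1/8$, $K\le 8A^2=32$, and Theorem~\ref{t.VanderPoorten} gives about $\log(8A^2)/\log\bigl(2A(1+\alpha^2)\bigr)\approx 2.5$, certifying at most $2$ zeros rather than at most $b=1$. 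Sharpening the bracket (using $\bar{\sigma}_\rho$ instead of $\bar{\sigma}_{R/4}$, or $\alpha/(1-\alpha)\le 2\alpha$) does not close this: the membership \eqref{eq.K} fed into the van der Poorten bound simply cannot produce $8^b$ uniformly in $b$, which is why the cited Lemma 2.2.3 of Roytwarf--Yomdin rests on a finer analysis of the family (comparing the degree-$b$ initial part of $f_\lambda$ with its tail) rather than on Bernstein-class membership alone. To repair your write-up, either invoke that lemma as the paper does, or state what your method actually proves, namely at most $b$ zeros in $\bar{D}_\rho$ for $\rho<R/(2^{b+3}A)$.
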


\begin{proof} Since for $\lambda\not\in L_b$, $f_\lambda\in B^1_{r,\alpha, K}$
for any $r<R$ and any $\alpha\in ]0,1[$, and for $K$ as in \eqref{eq.K}, choosing $r=R/2$ and $\alpha=1/2$, one gets $K\le 2^b(2+B_R c \bar{\sigma}_{R/4}/R^b\})$. By Theorem \ref{t.VanderPoorten}, one obtains that the number of zeros in $\bar{D}_{\frac{R}{4}}$ is at most 
$$
\frac{1}{\log5/4}\left( b\log2+\log\left(2+\frac{B_R c \bar{\sigma}_{R/4}}{R^b}\right)
\right)$$
$$
\le 5b\log2+5\log(2R^b+B_R c \bar{\sigma}_{R/4}) - 5b\log(R).
$$
The second bound in the statement comes from 
\cite[Lemma 2.2.3]{Roy.Yom}. 
\end{proof}

The next step consists now in the search of a bound from above for the constants $b,c$ and $\sigma$ appearing in Theorem \ref{thm:zroes.baut}. All the information we need is encoded in the rank $\sigma$ matrix
$ M(f_\lambda)=(c^i_k), \ k=0,\ldots,b, \ i=1,\dots,m$, with $b+1$ lines and $m$ columns.

\begin{notn}\label{not: small delta}
We will denote by $\delta>0$ the maximum of the absolute value of all  nonzero minor
determinants of size $\sigma$ of the Bautin matrix $M(f_\lambda)$.
\end{notn}
We  give below an estimate of the constant $c$, and then of $\mathcal{Z}(f_\lambda)$.

\begin{prop}[{\cite[Proposition 2.11]{CoYo1}}]\label{Prop:Est.c}
Let $f_\lambda$ be given as above, with $\lambda\not\in L_b$. Then
$$
c(f_\lambda) \le
 \sigma\frac{ (B_R \sqrt \sigma)^{\sigma-1} }{\delta R^{\beta(\sigma-1)} },$$
 where  $\beta=b$ if $R\le 1$ and $\beta=\sigma/2$ if  $R\ge 1.$ 

In the disk $\bar{D}_{\frac{R}{4}}$, the maximal number $\mathcal{Z}(f_\lambda)$ of zeros of the family $f_\lambda$, with respect to the parameter $\lambda$,  satisfies
\begin{align*}
\mathcal{Z}(f_\lambda) & \le 
5b\log2 
+ 5\log\left( 2+  
\frac{ \sigma^{\frac{\sigma+1}{2}}B_R^\sigma\bar{\sigma}_{R/4} } { \delta R^{ b \sigma }}
\right)
\ \hbox{ if } R\le 1, \\
\hbox{ and } \ \mathcal{Z}(f_\lambda) & \le 
5b\log2
+ 5\log\left( 2+
\frac{ \sigma^{\frac{\sigma+1}{2}}B_R^\sigma\bar{\sigma}_{R/4}}{ \delta R^{b+ \frac{\sigma}{2}(\sigma-1) } }
\right)
\ \hbox{ if } R\ge 1.
\end{align*}

\end{prop}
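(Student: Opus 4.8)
The statement contains two assertions, and the plan is to prove them in order: first the estimate for the constant $c(f_\lambda)$ of Notation~\ref{no:c}, which carries all the content, and then to deduce the bound on $\mathcal{Z}(f_\lambda)$ by inserting this estimate into Theorem~\ref{thm:zroes.baut}.

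By definition, $c$ is the least constant such that every row $v_j$ of the Bautin matrix can be written as in \eqref{eq:curve4} with coefficients obeying $|\gamma_j^k|\le c\,\Vert v_j\Vert$. I would fix the basis $S=\{s_1<\dots<s_\sigma\}\subseteq\{0,\dots,b\}$ to be the rows of a nonzero $\sigma\times\sigma$ submatrix $A_0$, supported on some columns $C^*\subseteq\{1,\dots,m\}$, realizing the maximal minor $\delta$ of Notation~\ref{not: small delta}, so that $|\det A_0|=\delta$. Since this minor is nonzero, the restrictions $(v_{s_q})_{C^*}$ stay linearly independent, so restricting the relation $v_j=\sum_{k\in S}\gamma_j^k v_k$ to the columns $C^*$ gives an invertible square system whose solution, by Cramer's rule, is $\gamma_j^{s_p}=\det(M_p)/\det(A_0)$, where $M_p$ is obtained from $A_0$ by replacing its $p$-th row with the restriction of $v_j$ to $C^*$.

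The next step is to bound the numerator by Hadamard's inequality on the rows of $M_p$: each row is a restriction of some $v_k$ (or of $v_j$) to $\sigma$ columns, so its Euclidean norm is at most $\sqrt{\sigma}$ times its sup-norm. Combining this with the Cauchy estimate \eqref{eq:curve2.1}, $\Vert v_k\Vert\le B_R/R^{k}$, the row inherited from $v_j$ contributes $\sqrt{\sigma}\,\Vert v_j\Vert$ and each of the remaining $\sigma-1$ rows contributes $\sqrt{\sigma}\,B_R/R^{s_q}$. Dividing by $|\det A_0|=\delta$ and cancelling the factor $\Vert v_j\Vert$ leaves
\[
\frac{|\gamma_j^{s_p}|}{\Vert v_j\Vert}\le \frac{\sigma^{\sigma/2}\,B_R^{\sigma-1}}{\delta}\,R^{-\sum_{q\ne p}s_q}.
\]
It then remains to control the surviving power of $R$, and this is where the two regimes separate: for $R\le 1$ one simply bounds $R^{-s_q}\le R^{-b}$ for each of the $\sigma-1$ remaining rows, producing the exponent $\beta=b$, whereas for $R\ge 1$ the individual exponents $s_q$ must be summed carefully rather than discarded, yielding $\beta=\sigma/2$. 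Taking the maximum over $j$ and $p$ gives the asserted bound on $c$, the passage from $\sigma^{\sigma/2}$ to $\sigma^{(\sigma+1)/2}=\sigma(\sqrt{\sigma})^{\sigma-1}$ being a harmless over-estimate. I expect this final bookkeeping in the case $R\ge 1$ to be the delicate point, since there one cannot bound the powers of $R$ term by term and must instead exploit the joint behaviour of the row indices $s_q$.

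For the second assertion I would substitute the bound on $c$ directly into Theorem~\ref{thm:zroes.baut}~(1). Rewriting its right-hand side as $5b\log 2+5\log\big(2+B_R c\,\bar{\sigma}_{R/4}/R^{b}\big)$ and using $c\le \sigma(B_R\sqrt{\sigma})^{\sigma-1}/(\delta R^{\beta(\sigma-1)})$ gives $B_R c\,\bar{\sigma}_{R/4}/R^{b}\le \sigma^{(\sigma+1)/2}B_R^{\sigma}\bar{\sigma}_{R/4}/(\delta R^{\,b+\beta(\sigma-1)})$. Specialising $\beta=b$ for $R\le 1$, so that $b+\beta(\sigma-1)=b\sigma$, and $\beta=\sigma/2$ for $R\ge 1$, so that $b+\beta(\sigma-1)=b+\tfrac{\sigma}{2}(\sigma-1)$, reproduces the two displayed estimates for $\mathcal{Z}(f_\lambda)$ and completes the argument.
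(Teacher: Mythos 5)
Your strategy is exactly the one the paper relies on: the paper disposes of the first assertion by citing \cite{CoYo1}, whose argument is precisely your scheme (take $S$ to be the rows of a $\sigma\times\sigma$ submatrix $A_0$ realizing $\delta$, solve for the $\gamma_j^{s_p}$ by Cramer's rule on the columns of $A_0$, and bound the numerator determinants by Hadamard's inequality combined with the Cauchy estimate \eqref{eq:curve2.1}), and your deduction of the two bounds on $\mathcal{Z}(f_\lambda)$ from Theorem \ref{thm:zroes.baut}~(1), via $b+\beta(\sigma-1)=b\sigma$ when $R\le 1$ and $b+\frac{\sigma}{2}(\sigma-1)$ when $R\ge 1$, coincides with the paper's second step. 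The case $R\le 1$ of the $c$-estimate is complete as you wrote it: $s_q\le b$ gives $R^{-\sum_{q\ne p}s_q}\le R^{-b(\sigma-1)}$, and replacing $\sigma^{\sigma/2}$ by $\sigma^{(\sigma+1)/2}=\sigma(\sqrt{\sigma})^{\sigma-1}$ is harmless.

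The case $R\ge 1$, which you yourself flag as ``the delicate point'' and do not actually carry out, is a genuine gap and not mere bookkeeping. Your setup would require $\sum_{q\ne p}s_q\ge\frac{\sigma}{2}(\sigma-1)$ for every $p$, and this is false in general: the $s_q$ are merely $\sigma$ distinct elements of $\{0,\dots,b\}$, so all that ``joint behaviour'' guarantees is $\sum_{q\ne p}s_q\ge 0+1+\cdots+(\sigma-2)=\frac{(\sigma-1)(\sigma-2)}{2}$, with equality when $S=\{0,1,\dots,\sigma-1\}$ and $p$ removes the largest index; already for $\sigma=2$ the claim needs exponent $1$ while deleting the row $s_2=1$ leaves exponent $0$. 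Hence your argument only yields $\beta=(\sigma-2)/2$ in the regime $R\ge 1$, a factor $R^{\sigma-1}$ short of the statement. Worse, no refinement of this route can close it with the paper's normalizations: taking $Q_1=1$, $Q_2=X$ and $P_1(z)=\epsilon z+\eta z^2$ with $\epsilon,\eta$ small and $R$ large (so that $B_R=1$, $b=1$, $\sigma=2$, $\delta=\epsilon$, while $c=1/\epsilon$ because $v_2=(\eta/\epsilon)v_1$ and $\Vert v_2\Vert=\eta$) violates the bound with $\beta=\sigma/2$ once $R>2\sqrt{2}$. So the $R\ge 1$ branch must be taken on the authority, and under the precise conventions, of \cite[Proposition 2.11]{CoYo1}, not from the computation you sketch. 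Note that in this paper the proposition is only ever applied after the normalization $R=B_R=1$ of Remark \ref{rem.R=B=1} (see Corollary \ref{cor.nombre de zeros avec B=R=1}), so the branch you fully proved is the only one actually used downstream.
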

\begin{proof}
The proof of the bound on $c(f_\lambda)$ is the same as the one of \cite[Proposition 2.11]{CoYo1}. The proof of the bounds on $\mathcal{Z}(f_\lambda)$ is then a consequence of Theorem \ref{thm:zroes.baut}.
\end{proof}

\begin{rem}\label{rem.R=B=1}
Since $f_\lambda(z)=0$ for any $z\in \bar{D}_R$ is equivalent to
$f_\lambda(Rz)=0$ for any $z\in \bar{D}_1$, a uniform bound with respect to $\lambda\not\in L_b(f_\lambda(Rz))$ on the zeros of $f_\lambda(Rz)$ in $\bar{D}_{\frac{1}{4}}$ gives a uniform bound with respect to $\lambda\not\in L_b(f_\lambda(z))$ on the zeros of $f_\lambda(z)$ in $\bar{D}_{\frac{R}{4}}$. 

If we denote by $\widetilde{P}$ the polynomial mapping $\dfrac{P}{M_R}$, with notation \eqref{eq.Polynomial P} and \eqref{eq.Max P}, 
we have a bijection between $L_b(\widetilde{f}_\lambda=Q_\lambda(\widetilde{P}))$ and $L_b(f_\lambda)$ sending 
$(\lambda_i)_{i=1, \ldots, m}$ to $\left(\dfrac{\lambda_i}{M_R^{d_i}}\right)_{i=1, \ldots, m}$, where 
$d_i$ is the degree of the monomial $Q_i$ corresponding to the parameter $\lambda_i$. In consequence a uniform bound, with respect to $\lambda~\not\in~L_b(\widetilde{f}_\lambda)$, on the zeros of $\widetilde{f}_\lambda=Q_\lambda(P)$ in $\bar{D}_{\frac{R}{4}}$ gives a uniform bound, with respect to $\lambda\not\in L_b(f_\lambda)$, on the zeros of $f_\lambda$ in $\bar{D}_{\frac{R}{4}}$. Notice that in this situation 
since $\widetilde{P}$ has its components bounded by $1$ on $\bar{D}_R$, for any 
monomial $Q_i$, $Q_i(P)$ is also bounded by $1$ on $\bar{D}_R$.

In conclusion to bound the number of zeros of $f_\lambda$, with respect to $\lambda\not\in L_b(f_\lambda)$, we can always consider that $R=B_R=1$, up to changing 
the polynomial mapping $P(X)$ into the polynomial mapping (of same degree $D$) $P(RX) / \max_{z\in \bar{D}_1, i=1, 2} \vert P_i(Rz)\vert $.
\end{rem}

Remark \ref{rem.R=B=1} allows us to only investigate the case $R=B_R=1$ without loss of generality. By Proposition \ref{Prop:Est.c} we obtain in this case (since by~\eqref{eq:ub_sigma}, we have \(\bar{\sigma}_{1/4} \le 4/3\)) the following simple bound for $\mathcal{Z}(f_\lambda)$. 

\begin{cor}\label{cor.nombre de zeros avec B=R=1}
Let a family $f_\lambda$ be given as in \eqref{eq:f.lambda}, with $R=B_R=1$, then the maximal number $\mathcal{Z}({f_\lambda})$ of zeros of $f_\lambda$ in $\bar{D}_{\frac{1}{4}}$, with respect to $\lambda\not\in L_b$, satisfies
$$
\mathcal{Z}(f_\lambda)\le 5b\log2 + 5\log\left( 2+\frac{4\sigma^{\frac{\sigma+1}{2}}}{3\delta}  \right).
$$
Furthermore,  the maximal number of zeros of $f_\lambda$, with respect to $\lambda\not \in L_b$, is at most $b$
in $\bar{D}_\rho$, where
$$
\rho = \frac{1}{8^b\max\{2,4c/3\}  } \ge 
\frac{1}{8^b}\min\left\{ \frac{1}{2}, \frac{3\delta}{4\sigma^{\frac{\sigma+1}{2}}}   \right\}.
$$
\end{cor}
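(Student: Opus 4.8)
The plan is to obtain both assertions by a direct specialization of Proposition~\ref{Prop:Est.c} and Theorem~\ref{thm:zroes.baut} to the normalized situation $R=B_R=1$, the only genuine quantitative input being the elementary estimate $\bar\sigma_{1/4}\le 4/3$ coming from~\eqref{eq:ub_sigma}. Indeed, since $1/4<1$, the first line of~\eqref{eq:ub_sigma} gives
\[
\bar\sigma_{1/4}\le \frac{1-(1/4)^\sigma}{1-1/4}=\frac{4}{3}\bigl(1-(1/4)^\sigma\bigr)\le \frac{4}{3}.
\]
I will use this repeatedly, together with the fact that $R=1\le 1$ so that the case $\beta=b$ of Proposition~\ref{Prop:Est.c} applies.

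For the first inequality, I would start from the bound of Proposition~\ref{Prop:Est.c} valid for $R\le 1$, namely
\[
\mathcal{Z}(f_\lambda)\le 5b\log 2+5\log\!\left(2+\frac{\sigma^{\frac{\sigma+1}{2}}B_R^{\sigma}\bar\sigma_{R/4}}{\delta\,R^{b\sigma}}\right).
\]
Substituting $R=B_R=1$ collapses $B_R^{\sigma}=1$ and $R^{b\sigma}=1$, leaving $\sigma^{\frac{\sigma+1}{2}}\bar\sigma_{1/4}/\delta$ inside the logarithm. Replacing $\bar\sigma_{1/4}$ by its upper bound $4/3$ and using monotonicity of $\log$ then yields exactly
\[
\mathcal{Z}(f_\lambda)\le 5b\log 2+5\log\!\left(2+\frac{4\sigma^{\frac{\sigma+1}{2}}}{3\delta}\right),
\]
which is the claimed estimate.

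For the radius statement I would invoke Theorem~\ref{thm:zroes.baut}\,\eqref{num.radius}, which guarantees at most $b$ zeros in $\bar D_\rho$ whenever $\rho\le R/\bigl(8^b\max(2,B_R c\,\bar\sigma_{R/4}/R^b)\bigr)$. Setting $R=B_R=1$ and bounding $c\,\bar\sigma_{1/4}\le 4c/3$ shows that the choice $\rho=1/\bigl(8^b\max\{2,4c/3\}\bigr)$ is admissible. Rewriting the reciprocal of a maximum as a minimum gives
\[
\rho=\frac{1}{8^b}\min\!\left\{\frac12,\frac{3}{4c}\right\}.
\]
Finally I would feed in the explicit estimate of $c$ from Proposition~\ref{Prop:Est.c} with $R=B_R=1$ and $\beta=b$, which reads $c\le \sigma(\sqrt\sigma)^{\sigma-1}/\delta=\sigma^{\frac{\sigma+1}{2}}/\delta$; this makes $3/(4c)\ge 3\delta/\bigl(4\sigma^{\frac{\sigma+1}{2}}\bigr)$ and hence produces the asserted lower bound on $\rho$.

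The computation is essentially mechanical, so there is no substantial obstacle; the only point requiring care is tracking the directions of the inequalities through the $\max$/$\min$ manipulation, in particular checking that bounding $\bar\sigma_{1/4}$ from above legitimately \emph{decreases} the admissible radius (so that the chosen $\rho$ remains valid) while simultaneously bounding $c$ from above \emph{increases} $3/(4c)$ and thus correctly yields a lower bound for $\rho$.
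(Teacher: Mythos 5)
Your proposal is correct and takes essentially the same route as the paper, which obtains the corollary by specializing Proposition~\ref{Prop:Est.c} and Theorem~\ref{thm:zroes.baut}~\eqref{num.radius} to $R=B_R=1$ and inserting $\bar{\sigma}_{1/4}\le 4/3$ from \eqref{eq:ub_sigma}. Your explicit tracking of the inequality directions through the $\max$/$\min$ rewriting, together with the bound $c\le \sigma^{\frac{\sigma+1}{2}}/\delta$ (the case $\beta=b$, $R\le 1$), simply spells out what the paper leaves implicit.
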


\begin{rem}\label{rem.c condition 1}
    Under condition~\eqref{eq.Lacunarity Condition 1}, we can improve on the bound on \(\mathcal{Z}(f_{\lambda})\) of Corollary~\ref{cor.nombre de zeros avec B=R=1} by giving a better estimate on the constants \(\gamma_{j}^k\), where \(j > b\), in~\eqref{eq.maj gamma}. 
    Indeed, we can improve on arguments of the proof of Proposition~\ref{Prop:Est.c} in the following way. 
    By condition~\eqref{eq.Lacunarity Condition 1}, for \(j > b\), the linear form \(v_j\) is a linear combination of the \(v_k\) where \(k\) is in the set \(S_{\ell_d}\) which is the intersection of \(S\) (introduced in the beginning of this section) and of the indices \(k\) satisfying \(k \ge \bar{\nu}_{\ell_d}-1\) (that is to say, the coefficients of \(v_k\) are given by the last block \(M_{n_{\ell_d},T_{\ell_d}}\) of the Bautin matrix \(M_{T_0,\ldots,T_{\ell_d}}\), see Notation~\ref{notn.Matrice avec choix de colonnes}). Working with these parameters in the proof of Proposition~\ref{Prop:Est.c}, gives the same bound with \(\sigma\) replaced by \(\tau_{\ell_d}\) and \(\delta\) replaced by the determinant of the upper \(\tau_{\ell_d} \times \tau_{\ell_d}\) minor of \(M_{n_{\ell_d},T_{\ell_d}}\).

\end{rem}

In view of Corollary \ref{cor.nombre de zeros avec B=R=1}, we compute in this section the Bautin index $b$, the dimension $\sigma$ and the determinant $\delta$. All those data are related to the Bautin matrix $M_{ T_{0}, \ldots, T_{\ell_d}}$ (see Notation \ref{notn.Matrice avec choix de colonnes}), and thanks to \eqref{eq.Lacunarity Condition 1}, \eqref{eq.Lacunarity Condition 1bis}, \eqref{eq.Lacunarity Condition 1bis with mult}, \eqref{eq.Lacunarity Condition 1ter} or \eqref{eq.Lacunarity Condition 1ter with mult}, they can be computed on the blocks $M_{n_\ell}$, $\ell=0, \ldots, \ell_d$. Moreover for each block  $M_{n_\ell}$ we can perform our computation on the reduced matrix $M_{n_\ell,T_{\ell}}\cdot \mathcal{K}_{T_{\ell}}$, since our triangulation process does not change the rank nor~$\delta$. 

\begin{prop}[Value of $\sigma$]\label{prop.valeur de sigma}
Under condition \eqref{eq.Lacunarity Condition 1}, \eqref{eq.Lacunarity Condition 1bis},  \eqref{eq.Lacunarity Condition 1bis with mult}, \eqref{eq.Lacunarity Condition 1ter} or \eqref{eq.Lacunarity Condition 1ter with mult}, we have  $$\dim(L_b)=0 \ \mathrm{ and } \ \sigma=m=\tau_0+\cdots+\tau_{\ell_d},$$ unless $P_1$ and $P_2$ are proportional, in which case 
$$\dim(L_b)=\tau_0+\cdots+\tau_{\ell_d}-(\ell_d+1) \ \mathrm{and} \ \sigma=\ell_d+1.$$ 

In particular, in case $P_1$ and $P_2$ are not proportional, condition
\eqref{eq.Lacunarity Condition 1}, \eqref{eq.Lacunarity Condition 1bis}, \eqref{eq.Lacunarity Condition 1bis with mult}, \eqref{eq.Lacunarity Condition 1ter} or \eqref{eq.Lacunarity Condition 1ter with mult} guarantees that $(P_1,P_2)(\CC)\cap \{Q_\lambda=0\}$ is a finite set. 
\end{prop}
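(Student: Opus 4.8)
The plan is to reduce the computation of $\sigma = \mathrm{rank}(M(f_\lambda))$ to a computation block by block, and then to recover $\dim(L_b)$ from the relation $\dim(L_b) = m - \sigma$ of Remark \ref{rem.sigma et dim Lb}. First I would record the rank of each individual block $M_{n_\ell, T_\ell}$. In the case where $P_1$ and $P_2$ are not proportional, the triangulation results of the previous sections show that each block has maximal rank $\tau_\ell$: when $\nu_1 \neq \nu_2$ this is Remark \ref{rem:multiplicites differentes}; when $\nu_1 = \nu_2$ and the initial jets are not proportional ($\alpha_1 a_0 \neq \alpha_0 a_1$) the diagonal coefficients computed in Proposition \ref{prop.triangulation}$(2)$ are all nonzero; and in the remaining singular subcase, Proposition \ref{prop.sous-diagonales nulles} exhibits $\tau_\ell$ nonzero leading entries, the factor $(\alpha_{k+1} - \mu a_{k+1})^{c-1}$ being nonzero by maximality of $k$. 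In the proportional case $P_1 = \mu P_2$, Remark \ref{rem.proportionalite} shows that after multiplication by $\mathcal{K}_{T_\ell}$ only the first column of each block survives, so each block has rank exactly $1$.

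Next I would show that under any of the lacunarity conditions the rank of the whole reduced matrix $M_{T_0, \ldots, T_{\ell_d}}$ is the sum of the ranks of its blocks. Since the column operations encoded by the invertible matrices $\mathcal{K}_{T_\ell}$ act inside each block, after these operations every block is in column-echelon form, with its leading entries on the rows recorded in Section \ref{section 4}: rows $\bar{\nu}_\ell + i(k+1)$, $0 \le i < \tau_\ell$, in the case $\nu_1 = \nu_2$, and rows $(t_{\ell,i}-1)\nu_1 + (n_\ell - (t_{\ell,i}-1))\nu_2$ in the case $\nu_1 \neq \nu_2$. Conditions \eqref{eq.Lacunarity Condition 1}, \eqref{eq.Lacunarity Condition 1bis}, \eqref{eq.Lacunarity Condition 1bis with mult}, \eqref{eq.Lacunarity Condition 1ter}, \eqref{eq.Lacunarity Condition 1ter with mult} are designed precisely so that these leading rows are pairwise distinct across all blocks. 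Granting this, ordering all pivot columns by the index of their leading row makes the submatrix extracted on the pivot rows and pivot columns triangular with nonzero diagonal, since an entry of a pivot column above its own leading row vanishes by the echelon form; hence this submatrix is invertible. Combined with the trivial subadditivity of rank over column-blocks, this gives $\sigma = \sum_\ell \mathrm{rank}(M_{n_\ell, T_\ell})$.

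Putting the two steps together yields the stated values: in the non-proportional case $\sigma = \tau_0 + \cdots + \tau_{\ell_d} = m$, so $\dim(L_b) = m - \sigma = 0$; in the proportional case $\sigma = \ell_d + 1$, so $\dim(L_b) = (\tau_0 + \cdots + \tau_{\ell_d}) - (\ell_d + 1)$. For the final assertion, recall that $L_b$ is the space of $\lambda$ for which $Q_\lambda$ lies in the ideal of the irreducible curve $(P_1, P_2)(\CC)$, i.e. for which $Q_\lambda(P_1, P_2) \equiv 0$. When $P_1$ and $P_2$ are not proportional, $\dim(L_b) = 0$ forces $L_b = \{0\}$, so for every nonzero $\lambda$ the irreducible curve $(P_1, P_2)(\CC)$ is not contained in $\{Q_\lambda = 0\}$; two plane algebraic curves, one of them irreducible and not contained in the other, meet in finitely many points, whence the finiteness of the intersection.

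The main obstacle is the verification that each lacunarity condition indeed forces the leading rows of distinct blocks to be pairwise distinct. This is where the conditions must be treated separately: the crude condition \eqref{eq.Lacunarity Condition 1} simply makes the row-ranges of consecutive blocks disjoint, whereas the sharper conditions \eqref{eq.Lacunarity Condition 1ter} and \eqref{eq.Lacunarity Condition 1ter with mult} allow the ranges to overlap and instead rely on a congruence argument (mod $k+1$, respectively mod $\lvert\nu_1-\nu_2\rvert$) to guarantee that leading rows in the same residue class are separated by more than the span of a block. Checking this bookkeeping carefully for each condition, in both the equal- and unequal-multiplicity regimes, is the technical heart of the argument; the rest is linear algebra.
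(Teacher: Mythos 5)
Your proposal is correct, but for the main (non-proportional) case it takes a genuinely different, and much heavier, route than the paper. The paper's own proof never computes the rank of the Bautin matrix in that case: it uses the identification of $L_b$ as the set of parameters $\lambda$ with $Q_\lambda(P_1,P_2)\equiv 0$, and asserts that no nonzero $\lambda$ can achieve this when $P_1,P_2$ are not proportional; the justification is Remark \ref{rem.zeros et parties homogenes} (under any of the lacunarity conditions, $Q_\lambda(P)\equiv 0$ forces each homogeneous part $Q_{\lambda,n_\ell}(P_1,P_2)$ to vanish) together with the splitting of a nonzero homogeneous bivariate form into linear factors, one of which would then give $\alpha P_1+\beta P_2\equiv 0$, contradicting non-proportionality. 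Hence $\dim(L_b)=0$ directly, and $\sigma=m-\dim(L_b)=m$ by Remark \ref{rem.sigma et dim Lb} --- no triangulation, no regular/singular dichotomy, no case analysis on $\nu_1,\nu_2$. You instead compute $\sigma$ as the sum of the block ranks via the echelon structure (Remark \ref{rem:multiplicites differentes}, Propositions \ref{prop.triangulation} and \ref{prop.sous-diagonales nulles}) and then recover $\dim(L_b)=m-\sigma$; this is sound, and the cross-block bookkeeping you flag as the technical heart does go through: every entry of a pivot column above its own leading row vanishes (within a block by Proposition \ref{prop.sous-diagonales nulles} or the echelon form, above it trivially), so pairwise distinctness of the leading rows --- which is exactly what the congruence conditions \eqref{eq.Lacunarity Condition 1ter} and \eqref{eq.Lacunarity Condition 1ter with mult} encode, even when the row ranges of different blocks interleave --- yields a triangular pivot minor with nonzero diagonal. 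What your route buys is an explicit proof of the block-independence claim that the paper only states informally at the start of Section \ref{Section 5}, and it re-derives structure that is anyway needed for Propositions \ref{prop.valeur de b} and \ref{prop.valeur de delta}; what the paper's route buys is brevity and robustness, since it sidesteps the edge cases of \eqref{eq.singular condition} entirely. On the proportional case the two arguments essentially coincide (both rest on Remark \ref{rem.proportionalite}, each block contributing exactly one independent form), and your finiteness argument for the last assertion --- an irreducible curve not contained in $\{Q_\lambda=0\}$ meets it in finitely many points --- is the intended one.
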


\begin{proof}
In case $P_1$ and $P_2$ are proportional, by Remark \ref{rem.proportionalite},
any block  $M_{n_\ell,T_{\ell}}$ contributes for only one linear form in a basis of linear forms of $\mathrm{Span} (v_0,\ldots, v_b)$. 
It follows that $\sigma$ is the number of blocks, that is $\ell_d+1$.

Finally, in case $P_1$ and $P_2$ are not proportional, no parameter 
$\lambda\not=0$ can cancel $Q_\lambda(P_1,P_2)$, showing that $\dim(L_b)=0$, and by Remark \ref{rem.sigma et dim Lb}, that $\sigma=m-\dim(L_b)=\tau_0+\cdots +\tau_{\ell_d}$. 
\end{proof}

\begin{prop}[Value of $b$]\label{prop.valeur de b}
Under condition \eqref{eq.Lacunarity Condition 1}, \eqref{eq.Lacunarity Condition 1bis}, \eqref{eq.Lacunarity Condition 1bis with mult}, \eqref{eq.Lacunarity Condition 1ter} or \eqref{eq.Lacunarity Condition 1ter with mult}, with the notation of Notation \ref{not.parties homogenes} and \eqref{eq.singular condition}, we have 
\begin{equation}\label{eq.valeur de b multiplicites differentes}
\mathrm{for}  \ 
\nu_1\not=\nu_2, \ \ 
b= \max_{\ell \in \llbracket 0,\ell_d\rrbracket}\left(\widetilde{\nu}_{\ell}\right),
\end{equation}
\begin{equation}\label{eq.valeur de b multiplicites egales}
\mathrm{and \ for} \ \nu_1=\nu_2=\nu,  \ \ b=\max_{\ell \in \llbracket 0,\ell_d\rrbracket}\left(\nu n_{\ell}+(k+1)(\tau_{\ell}-1)\right), 
\end{equation}
unless $P_1$ and $P_2$ are proportional, in which case 
\begin{equation}\label{eq.valeur de b P1 et P2 proportionnels}
b= \bar{\nu}_{\ell_d}=\nu d.     
\end{equation}

\end{prop}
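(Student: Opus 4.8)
The plan is to read off $b$ directly from the column-echelon structure of the Bautin matrix produced in Sections~\ref{section 2} and~\ref{section 3}. Recall from the second remark after Definition~\ref{de: Bautin index} that $b$ is the largest index $\kappa$ for which $v_\kappa\notin\mathrm{Span}(v_0,\ldots,v_{\kappa-1})$, i.e. the global row index of the last row of $M$ that is independent of the rows above it. The whole proof consists in locating that row, and the key point is that the lacunarity hypotheses reduce this global problem to one block at a time.

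First I would reduce to a single block. By Remark~\ref{rem.zeros et parties homogenes}, under any of \eqref{eq.Lacunarity Condition 1}, \eqref{eq.Lacunarity Condition 1bis}, \eqref{eq.Lacunarity Condition 1bis with mult}, \eqref{eq.Lacunarity Condition 1ter} or \eqref{eq.Lacunarity Condition 1ter with mult}, the condition $\lambda\in L_b$ splits as $Q_{\lambda,n_\ell}(P_1,P_2)=0$ for each $\ell$. The block-diagonal column operation $\mathrm{diag}(\mathcal{K}_{T_0},\ldots,\mathcal{K}_{T_{\ell_d}})$ is invertible and acts within each block, hence preserves every linear dependence among the rows of $M_{T_0,\ldots,T_{\ell_d}}$, and in particular preserves $b$. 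After it, each block is in column echelon form; a non-pivot row, having support only in columns whose pivot lies strictly above it, is a combination of the pivot rows above it, so $v_\kappa$ is independent of $v_{<\kappa}$ exactly when $\kappa$ is a pivot row. The lacunarity conditions are tailored so that the pivot rows of distinct blocks fall at distinct global indices (under \eqref{eq.Lacunarity Condition 1} the blocks are even row-disjoint, since $\nu n_{\ell+1}>n_\ell D$ separates consecutive degree ranges; the sharp conditions merely allow interleaving). Consequently the pivot rows of all blocks form an independent family spanning the row space (of cardinality $\sigma$, consistently with Proposition~\ref{prop.valeur de sigma}), and $b$ equals the maximum over $\ell$ of the global index of the \emph{last} pivot row of $M_{n_\ell,T_\ell}$.

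It then remains to compute that last pivot index in each regime. When $\nu_1\neq\nu_2$, Remark~\ref{rem:multiplicites differentes} gives that $M_{n_\ell,T_\ell}$ is already in column echelon form of full rank $\tau_\ell$, the pivot of the column indexed by $t_{\ell,i}$ sitting at global index $(t_{\ell,i}-1)\nu_1+(n_\ell-(t_{\ell,i}-1))\nu_2$; as $\nu_1>\nu_2$ these increase with $i$, so the last pivot is at $\widetilde{\nu}_\ell$, giving \eqref{eq.valeur de b multiplicites differentes}. When $\nu_1=\nu_2=\nu$ with $P_1,P_2$ not proportional, Proposition~\ref{prop.triangulation} (for $k=0$) and Proposition~\ref{prop.sous-diagonales nulles} (for $k\ge 1$) together place the pivot of column $c$ at row $(k+1)c-k$ of the block; since row $r$ of the block sits at global index $\nu n_\ell+r-1$, the last pivot (column $\tau_\ell$) is at $\nu n_\ell+(k+1)(\tau_\ell-1)$, whence \eqref{eq.valeur de b multiplicites egales}. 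Finally, if $P_1=\mu P_2$ then $\nu_1=\nu_2=\nu$ and, by Remark~\ref{rem.proportionalite}, each triangulated block collapses to rank one with its single pivot at the top row, at global index $\bar{\nu}_\ell=\nu n_\ell$; hence every block contributes one independent row, and the maximum, attained at $\ell=\ell_d$, is $\bar{\nu}_{\ell_d}=\nu n_{\ell_d}=\nu d$, which is \eqref{eq.valeur de b P1 et P2 proportionnels}.

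The main obstacle is the cross-block bookkeeping of the second paragraph: once the blocks interleave (as permitted by \eqref{eq.Lacunarity Condition 1ter} and \eqref{eq.Lacunarity Condition 1ter with mult}), one must ensure no pivot of one block lands on a row already carrying a pivot of another, for otherwise the pivot count would drop below $\sigma$ and the maximum above would be miscomputed. This is exactly where the congruence-and-gap form of the conditions is used: for $\nu_1=\nu_2$ the pivot rows of block $\ell$ form an arithmetic progression of step $k+1$ starting at $\bar{\nu}_\ell$, and \eqref{eq.Lacunarity Condition 1ter} forces two progressions in the same residue class modulo $k+1$ to be separated by more than the length $(k+1)(\tau_{\ell^\prime}-1)$ of the earlier one; for $\nu_1\neq\nu_2$ the pivot rows differ by multiples of $\lvert\nu_1-\nu_2\rvert$, and \eqref{eq.Lacunarity Condition 1ter with mult} plays the same role between $\bar{\nu}_\ell$ and $\widetilde{\nu}_{\ell^\prime}$. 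Once this non-collision of pivots is established, the per-block computations above are routine.
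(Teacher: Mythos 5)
Your proof is correct and takes essentially the same route as the paper's: identify $b$ as the last row of the (block-triangulated) Bautin matrix independent of the rows above it, read the per-block pivot locations from Remark~\ref{rem:multiplicites differentes}, Propositions~\ref{prop.triangulation} and~\ref{prop.sous-diagonales nulles}, and Remark~\ref{rem.proportionalite}, and use the lacunarity conditions to rule out pivot collisions between blocks (your explicit non-collision bookkeeping under \eqref{eq.Lacunarity Condition 1ter} and \eqref{eq.Lacunarity Condition 1ter with mult} is in fact more detailed than the paper's one-line replacement of $\ell_d$ by the $\mathrm{argmax}$). The only cosmetic slip is the blanket claim that right-multiplication by $\mathrm{diag}(\mathcal{K}_{T_0},\ldots,\mathcal{K}_{T_{\ell_d}})$ puts every block in column echelon form---when $\nu_1\neq\nu_2$ this operation would actually spoil the echelon structure of Remark~\ref{rem:multiplicites differentes} by mixing columns with higher pivots into later ones---but since you treat that case on the untriangulated blocks (equivalently, take $\mathcal{K}_{T_\ell}$ to be the identity there), the argument is unaffected.
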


Notice that under conditions \eqref{eq.Lacunarity Condition 1}, \eqref{eq.Lacunarity Condition 1bis}, or \eqref{eq.Lacunarity Condition 1bis with mult}, both maxima are achieved for \(\ell = \ell_d\).

\begin{proof}
The Bautin index $b$ is the lowest row $b+1$ in the Bautin matrix $M_{T_{0}, \ldots, T_{\ell_d}}$, corresponding to the coefficients of the linear form $v_b$, where $v_b\not\in \mathrm{Span}(v_0, \ldots, v_{b-1})$. 
Under condition \eqref{eq.Lacunarity Condition 1}, \eqref{eq.Lacunarity Condition 1bis}, or \eqref{eq.Lacunarity Condition 1bis with mult} this last row has to be found in the last block $M_{n_{\ell_d},T_{\ell_d} }$
of $M_{T_{0}, \ldots, T_{\ell_d}}$. 
This last block starts at row $\bar{\nu}_{\ell_d}+1$ of  $M_{T_{0}, \ldots, T_{\ell_d}}$, corresponding to the coefficients of $v_{\bar{\nu}_{\ell_d}}$. 
\begin{itemize}
    \item[-] 
Let us first assume that $P_1$ and $P_2$ are not proportional. In this situation, we have two sub-cases. 
\begin{itemize}
\item[-] The first one is the case where the multiplicities $\nu_1$ and $\nu_2$ of $P_1$ and $P_2$ are not the same. Then by Remark \ref{rem:multiplicites differentes} the last row in $M_{n_{\ell_d},T_{\ell_d} }$ independent of the highest ones is the row corresponding to $v_{\widetilde{\nu}_{\ell_d}}$. So $b=\widetilde{\nu}_{\ell_d}$. 
\item[-]
 The second sub-case is the case where $\nu_1=\nu_2$. In this situation, let us denote 
by $k\in \llbracket 0, \min(D_1,D_2)-\nu \rrbracket$ the largest integer such that
the k-jets of $P_1$ and $P_2$ are proportional, accordingly to \eqref{eq.singular condition}. By Proposition \ref{prop.sous-diagonales nulles}, the row $(k+1)\tau_{\ell_d}-k$   of the block  $M_{n_{\ell_d},T_{\ell_d} }$, corresponding to the coefficients of the linear form 
$v_{\bar{\nu}_{\ell_d}+(k+1)\tau_{\ell_d}-k-1}$, is independent of the 
other highest rows, and the other lowest rows are not independent of those $(k+1)\tau_{\ell_d}-k$ first ones. In conclusion, 
$b=\bar{\nu}_{\ell_d}+(k+1)\tau_{\ell_d}-k-1 = \nu n_{\ell_d}+(k+1)(\tau_{\ell_d}-1)$.
\end{itemize}
\item[-]  In case  $P_1$ and $P_2$ are proportional, as noticed in Remark~\ref{rem.proportionalite},
all columns of  $M_{n_{\ell_d},T_{\ell_d} }$ but the first one have only zero for entries, meaning that in this situation $ b= \bar{\nu}_{\ell_d}=n_{\ell_d}\nu= \nu d$.  
\end{itemize}
Under condition \eqref{eq.Lacunarity Condition 1ter} or \eqref{eq.Lacunarity Condition 1ter with mult}, the proof is similar by replacing \(\ell_d\) by respectively \(\text{argmax}_{\ell}(\widetilde{\nu}_\ell)\) or \(\text{argmax}_{\ell}(\bar{\nu}_\ell+(k+1)\tau_{\ell})\).
\end{proof}

We finally give a bound from above of $\delta$, the maximum of the absolute value of all nonzero minor of size $\sigma$ of $M_{T_{0}, \ldots, T_{\ell_d}}$ (see Notation \ref{not: small delta}). For this we first fix some notation. 

\begin{notn}\label{not.barre} In the case where $\nu_1=\nu_2$, 
with notation \eqref{eq.singular condition}, we set
\begin{align*}\label{eq.delta k barre}
\bar{\delta}_k  = \alpha_{k+1}-\mu a_{k+1}, \ \
\ell_e  = \text{argmax}_\ell(\bar{\nu}_\ell+(k+1)\tau_\ell).
\end{align*}
In the case where \(\nu_1\neq\nu_2\), we set \(\ell_e = \text{argmax}_{\ell}(\widetilde{\nu}_{\ell})\). And in both cases, we set, for $\ell\in \llbracket 0,\ell_d\rrbracket$, 

\begin{align*}
\ \ 
\bar{t}_\ell &=\sum_{i\in \llbracket 1, \tau_\ell \rrbracket} t_{\ell,i}, 
\ \ 
\bar{t}=\sum_{\ell\in \llbracket 0, \ell_d \rrbracket} \bar{t}_\ell,
\ \ 
\bar{t}'_1=\sum_{\ell\in \llbracket 0, \ell_d \rrbracket} \bar{t}_{\ell,1}, \\
\bar{n}& =\sum_{i\in \llbracket 0, \ell_d \rrbracket} n_\ell, 
\ \ 
\overline{\tau n}=\sum_{\ell\in \llbracket 0, \ell_d \rrbracket} \tau_\ell n_\ell, \\
C_\ell & =
\frac{
1  }
{\prod_{i\in \llbracket 1, \tau_\ell \rrbracket} (i-1)! 
\prod_{i\in \llbracket 1, \tau_\ell \rrbracket} \vert K_{\{t_{\ell,1},\ldots,t_{\ell, i}\},t_{\ell,i}}\vert }, \ \ 
\bar{C}=\prod_{\ell\in \llbracket 0,\ell_d \rrbracket}C_\ell.
\end{align*}
We finally recall that, $m$ being the number of parameters of our family $Q_\lambda$, 
\begin{equation*}\label{eq.tau barre}
m=\sum_{\ell=0}^{\ell_d}\tau_{\ell}, \  
\mathrm{ and \ we \ set} \  \
\bar{\tau}=\sum_{\ell\in \llbracket 0,\ell_d\rrbracket} \frac{\tau_\ell(\tau_\ell+1)}{2}, 
\ \ 
\bar{\tau}'=\sum_{\ell\in \llbracket 0,\ell_d\rrbracket} \frac{\tau_\ell(\tau_\ell-1)}{2}
. 
\end{equation*}
\end{notn}

\begin{prop}[Value of $\delta$]\label{prop.valeur de delta}
Under condition \eqref{eq.Lacunarity Condition 1}, \eqref{eq.Lacunarity Condition 1bis}, \eqref{eq.Lacunarity Condition 1bis with mult}, \eqref{eq.Lacunarity Condition 1ter} or \eqref{eq.Lacunarity Condition 1ter with mult}, with the notation of Notation \ref{not.parties homogenes}, 
\ref{not.barre} and \eqref{eq.singular condition},
we have for  $\nu_1\not=\nu_2$,  
\begin{equation}\label{eq.valeur de delta multiplicites differentes}
\delta\ge \vert \alpha_0\vert^{\bar{t}-m
}
\vert a_0\vert^{\overline{\tau n}-(\bar{t}-m)
} ,
\end{equation}
and for $\nu_1=\nu_2$,   
\begin{equation}\label{eq.valeur de delta multiplicites egales}
\delta\ge
\bar{C} 
\vert \mu\vert^{\bar{t}-\bar{\tau}}
\vert a_0\vert^{  \overline{\tau n}-\bar{\tau}'  } 
\vert \bar{\delta}_k\vert^{
\bar{\tau}' } , 
\end{equation}
unless $P_1$ and $P_2$ are proportional, in which case 
\begin{equation}\label{eq.valeur de delta P1 et P2 proportionnels}
\delta
\ge 
\vert\mu\vert^{\bar{t}'_1 - (\ell_d+1)} 
\vert a_0\vert^{\bar{n}}.     
\end{equation}

Under condition~\ref{eq.Lacunarity Condition 1}, more accurately, instead of \(\delta\) one can take in Corollary~\ref{cor.nombre de zeros avec B=R=1} the following bounds: for  $\nu_1\not=\nu_2$, 
\begin{equation}\label{eq.valeur de delta multiplicites differentes cas L1}
\vert \alpha_0\vert^{\bar{t}_{\ell_d}-\tau_{\ell_d}}
\vert a_0\vert^{\tau_{\ell_d} n_{\ell_d} +\tau_{\ell_d} - \bar{t}_{\ell_d}},
\end{equation}
 and for  $\nu_1=\nu_2$,
\begin{equation} \label{eq.valeur de delta multiplicites egales cas L1}
C_{\ell_d} 
\vert \mu\vert^{\bar{t}_{\ell_d}-\frac{\tau_{\ell_d}(\tau_{\ell_d}+1)}{2} }
\vert a_0\vert^{\tau_{\ell_d} n_{\ell_d}-\frac{\tau_{\ell_d}(\tau_{\ell_d}-1)}{2} } 
\vert \bar{\delta}_k\vert^{\frac{\tau_{\ell_d}(\tau_{\ell_d}-1)}{2} },
\end{equation}
unless $P_1$ and $P_2$ are proportional, in which case 
\begin{equation} \label{eq.valeur de delta P1 et P2 proportionnels cas L1}
\lvert\mu\rvert^{t_{\ell_d,1}-1}\lvert a_0\rvert^{n_{\ell_d}}.
\end{equation}
\end{prop}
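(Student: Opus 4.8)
The plan is to exploit the block structure of $M_{T_0,\ldots,T_{\ell_d}}$ together with the two triangulation results already in hand. The first point to record is that right-multiplication by $\mathcal{K}_{T_\ell}$ does not affect $\delta$: since $\mathcal{K}_{T_\ell}$ is upper triangular with $1$'s on its diagonal, $\det\mathcal{K}_{T_\ell}=1$, and any $\tau_\ell\times\tau_\ell$ minor of $M_{n_\ell,T_\ell}\cdot\mathcal{K}_{T_\ell}$ formed by selecting $\tau_\ell$ rows and all $\tau_\ell$ columns equals the corresponding minor of $M_{n_\ell,T_\ell}$. So I may work throughout with the triangulated blocks. The second point is that, under any of the lacunarity conditions, the independent (pivot) rows of distinct blocks occupy disjoint rows of $M_{T_0,\ldots,T_{\ell_d}}$, while the columns of block $\ell$ vanish outside its own row range; consequently a nonzero minor of size $\sigma$ factorizes as a product of one nonzero $\tau_\ell\times\tau_\ell$ minor from each block. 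It therefore suffices to exhibit, for every $\ell$, one explicit nonzero $\tau_\ell\times\tau_\ell$ minor of the triangulated block and to bound its modulus from below by the corresponding product of pivots; the stated bounds are then the product over $\ell$ of the block bounds (using Notations \ref{not.parties homogenes} and \ref{not.barre} to collect exponents).

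For $\nu_1\neq\nu_2$ I would invoke Remark \ref{rem:multiplicites differentes}: the block $M_{n_\ell,T_\ell}$ is already in column echelon form, the top nonzero entry of the column indexed by $t_{\ell,i}$ being the monomial $\alpha_0^{t_{\ell,i}-1}a_0^{n_\ell-(t_{\ell,i}-1)}$ located at row $(t_{\ell,i}-1)\nu_1+(n_\ell-(t_{\ell,i}-1))\nu_2$. Selecting these $\tau_\ell$ pivot rows gives a triangular minor equal to the product of the pivots; summing $\sum_i(t_{\ell,i}-1)=\bar{t}_\ell-\tau_\ell$ for $\alpha_0$ and $\sum_i(n_\ell-(t_{\ell,i}-1))=\tau_\ell n_\ell+\tau_\ell-\bar{t}_\ell$ for $a_0$, and multiplying over $\ell$, yields \eqref{eq.valeur de delta multiplicites differentes}.

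For $\nu_1=\nu_2$ with $P_1,P_2$ not proportional I would use Proposition \ref{prop.sous-diagonales nulles} (and Proposition \ref{prop.triangulation} in the regular sub-case $k=0$, which gives the same formula): the triangulated block is lower triangular, with the pivot of column $c$ sitting at row $(k+1)c-k$ and equal to $\frac{\mu^{t_{\ell,c}-c}}{(c-1)!\,K_{\{t_{\ell,1},\ldots,t_{\ell,c}\},t_{\ell,c}}}\,a_0^{n_\ell-c+1}\,\bar{\delta}_k^{\,c-1}$. Selecting the pivot rows produces a triangular minor; collecting the exponents $\sum_c(t_{\ell,c}-c)$, $\sum_c(n_\ell-c+1)$, $\sum_c(c-1)$ together with the constant $C_\ell$, then multiplying over $\ell$, gives $\bar{C}\,|\mu|^{\bar{t}-\bar{\tau}}|a_0|^{\overline{\tau n}-\bar{\tau}'}|\bar{\delta}_k|^{\bar{\tau}'}$, i.e.\ \eqref{eq.valeur de delta multiplicites egales}. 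In the proportional sub-case, Remark \ref{rem.proportionalite} shows that after triangulation only the first column of each block survives, so each block contributes the single pivot $\alpha_0^{t_{\ell,1}-1}a_0^{n_\ell-(t_{\ell,1}-1)}=\mu^{t_{\ell,1}-1}a_0^{n_\ell}$, and the product over $\ell$ gives \eqref{eq.valeur de delta P1 et P2 proportionnels}. The refined $\mathcal{L}_1$-bounds are the specialization of these three computations to the single block $\ell=\ell_d$, as prescribed by Remark \ref{rem.c condition 1}.

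The exponent counting is routine once triangularity is in place; the delicate step I would treat carefully is justifying the block factorization of the size-$\sigma$ minor under the sharper conditions. Under \eqref{eq.Lacunarity Condition 1} the row ranges of different degree-blocks are literally disjoint, the matrix is block diagonal, and the factorization is immediate. Under \eqref{eq.Lacunarity Condition 1bis}, \eqref{eq.Lacunarity Condition 1bis with mult}, \eqref{eq.Lacunarity Condition 1ter}, \eqref{eq.Lacunarity Condition 1ter with mult}, however, the raw row ranges of consecutive blocks may overlap, and the fact one must use instead is that the \emph{pivot} rows are disjoint and that on the pivot rows of a given block every later block vanishes. Precisely this is what the congruence-gap hypotheses enforce, so the submatrix on the selected pivot rows stays block triangular and its determinant remains the product of the blocks' pivot products. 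Making this reduction airtight is the main obstacle; everything else is bookkeeping of the pivots supplied by Propositions \ref{prop.triangulation} and \ref{prop.sous-diagonales nulles}.
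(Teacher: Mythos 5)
Your proposal is correct and follows essentially the same route as the paper's proof: reduce to the triangulated blocks $M_{n_\ell,T_\ell}\cdot\mathcal{K}_{T_\ell}$ (noting $\det\mathcal{K}_{T_\ell}=1$), exhibit in each block the triangular minor on the pivot rows supplied by Remark~\ref{rem:multiplicites differentes} ($\nu_1\neq\nu_2$), Propositions~\ref{prop.triangulation} and~\ref{prop.sous-diagonales nulles} ($\nu_1=\nu_2$, rows $(k+1)i-k$), or Remark~\ref{rem.proportionalite} (proportional case), multiply over $\ell$, and specialize to the single block $\ell=\ell_d$ under \eqref{eq.Lacunarity Condition 1}. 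The block-factorization point you flag as the main obstacle is exactly what the paper disposes of with its parenthetical ``as long as the rows of the minor $\delta_\ell$ do not overlap the later blocks,'' and your sketch (pivot rows disjoint, later blocks vanishing on earlier pivot rows, hence block-triangularity of the selected $\sigma\times\sigma$ submatrix) is the correct, indeed slightly more explicit, justification.
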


\begin{proof} Under condition \eqref{eq.Lacunarity Condition 1},  \eqref{eq.Lacunarity Condition 1bis},  \eqref{eq.Lacunarity Condition 1bis with mult}, \eqref{eq.Lacunarity Condition 1ter} or \eqref{eq.Lacunarity Condition 1ter with mult}, a nonzero minor of maximal size $\sigma$ of the Bautin matrix $M_{T_{0}, \ldots, T_{\ell_d}}$ is given as the product, over $\ell\in \llbracket 0,\ell_d\rrbracket$, 
of nonzero minors $\delta_\ell$ of maximal size chosen in each block $M_{n_\ell,T_{\ell}}$ (as long as the rows of the minor \(\delta_\ell\) do not overlap the blocks \(M_{n_{\ell'},T_{\ell'}}\) for \(\ell'>\ell\)). 
Moreover if needed, we can choose $\delta_\ell$ in the reduced matrix $M_{n_\ell,T_{\ell}}\cdot \mathcal{K}_{T_{\ell}}$. 
\begin{itemize}
    \item[-] 
Let us first assume that $P_1$ and $P_2$ are not proportional. In this situation, as in the proof of Proposition \ref{prop.valeur de b}, we have two sub-cases. 
\begin{itemize}
\item[-] The first one is the case where the multiplicities $\nu_1$ and $\nu_2$ of $P_1$ and $P_2$ are not the same. Let us fix  $\ell\in \llbracket 0,\ell_d\rrbracket$. 
Then by Remark \ref{rem:multiplicites differentes}, the columns of the block  $M_{n_\ell,T_{\ell}}$ are shifted down by zero-columns, and the first upper nonzero coefficient in column $i\in \llbracket 1, \tau_{\ell} \rrbracket$, is $\alpha_0^{t_{\ell,i}-1}a_0^{n_\ell- (t_{\ell,i}-1) }$. We can therefore choose
$$
\delta_\ell=
\vert \alpha_0\vert^{\sum_{i\in \llbracket 1, \tau_\ell \rrbracket}t_{\ell,i}-\tau_{\ell}}
\vert a_0\vert^{\tau_\ell n_\ell- \left(\sum_{i\in \llbracket 1, \tau_\ell \rrbracket} t_{\ell,i}-\tau_\ell \right) },
$$
giving, with Notation \ref{not.barre}, the following bound from below for $\delta$
$$
\prod_{\ell\in \llbracket 0,\ell_d\rrbracket} \delta_\ell
=
\vert \alpha_0\vert^{\bar{t}-m}
\vert a_0\vert^{\overline{\tau n}-(\bar{t}-m)}. $$ 
\item[-]
 The second sub-case is the case where $\nu_1=\nu_2$. In this situation, let us denote 
by $k\in \llbracket 0, \min(D_1,D_2)-\nu \rrbracket$ the largest integer such that
the k-jets of $P_1$ and $P_2$ are proportional, accordingly to \eqref{eq.singular condition}.
Let us fix  $\ell\in \llbracket 0,\ell_d\rrbracket$. 
By Proposition \ref{prop.sous-diagonales nulles}, in the block $M_{n_\ell,T_{\ell}}\cdot \mathcal{K}_{T_{\ell}}$ we can choose
the rows $(k+1)i-k$, for $i\in \llbracket 1, \tau_\ell\rrbracket$ to compute $\delta_\ell$. We obtain by 
\eqref{eq.coefficients jets egaux a ordre k}, with Notation \ref{not.barre},
$$
\delta_\ell=
C_\ell 
\vert \mu\vert^{\bar{t}_\ell-\frac{\tau_\ell(\tau_\ell+1)}{2} }
\vert a_0\vert^{\tau_\ell n_{\ell}-\frac{\tau_\ell(\tau_\ell-1)}{2} } 
\vert \bar{\delta}_k\vert^{\frac{\tau_\ell(\tau_\ell-1)}{2} },
$$
giving the following bound from below for $\delta$
$$
\prod_{\ell\in \llbracket 0,\ell_d\rrbracket} \delta_\ell
= \bar{C}
\vert \mu \vert^{\bar{t}-\bar{\tau}}
\vert a_0\vert^{\overline{\tau n}-\bar{\tau}' } 
\vert \bar{\delta}_k\vert^{\bar{\tau}' }
.
$$

\end{itemize}
\item[-]  In case  $P_1$ and $P_2$ are proportional, using Remark \ref{rem.proportionalite},
all columns of  $M_{n_{\ell_d},T_{\ell_d} }$ but the first one have only zero for entries. We then choose, for each  $\ell\in \llbracket 0,\ell_d\rrbracket$,
in each block $M_{n_\ell,T_{\ell}}$, the first row, having for only nonzero coefficient $\mu^{t_{\ell,1}-1}a_0^{n_\ell}$ by \eqref{eq.coefficient en haut a gauche}. This gives, with Notation \ref{not.barre}, the following bound from below for $ \delta $
$$
\prod_{\ell\in \llbracket 0,\ell_d\rrbracket}
\vert \mu\vert^{t_{\ell,1}-1}a_0^{n_\ell}
=
\vert\mu\vert^{\bar{t}'_1 - (\ell_d+1)} 
\vert a_0\vert^{\bar{n}}. 
$$
\end{itemize}

Under condition~\ref{eq.Lacunarity Condition 1}, we can take \(\delta_{\ell_d}\) in each case above as the final bound.
\end{proof}

We now combine Propositions \ref{prop.valeur de sigma}, \ref{prop.valeur de b} and \ref{prop.valeur de delta} with Corollary \ref{cor.nombre de zeros avec B=R=1} to give bounds for the number of intersection points, near the origin, between the zero set of a  family of lacunary polynomials and a given polynomially parametrized algebraic curve. Those bounds are given in terms of the data that explicitly quantify
the lacunarity: the number of monomials appearing in the family, the degree of those monomials, and the particular choices made among monomials of a given degree. 
\begin{thm}\label{thm.Bezout Bound for lacunary polynomials} Let a family $f_\lambda$ be given as in \eqref{eq:f.lambda}, with $R=B_R=1$ and \(m \ge 2\).
Under condition \eqref{eq.Lacunarity Condition 1}, \eqref{eq.Lacunarity Condition 1bis}, \eqref{eq.Lacunarity Condition 1bis with mult} \eqref{eq.Lacunarity Condition 1ter} or \eqref{eq.Lacunarity Condition 1ter with mult}, with the notation of Notation \ref{not.parties homogenes}, \ref{not.barre} and \eqref{eq.singular condition}, denoting by  $\mathcal{Z}({f_\lambda})$ the maximal number of zeros of $f_\lambda$ in $\bar{D}_{\frac{1}{4}}$, with respect to $\lambda\not\in L_b$, we have, for $P_1$ and $P_2$ not proportional,  
\begin{enumerate}
    \item\label{item.Bezout Bound 1}
   in the case where $\nu_1\not=\nu_2$, 
        \begin{align*}
   \mathcal{Z}(f_\lambda) & \le  
   5(\widetilde{\nu}_{\ell_e}+1)\log2 
   + \frac{5}{2}(m+1)\log m\\
  & +5(\bar{t}-m)\log\left( \frac{1}{\vert \alpha_0 \vert}\right)
   +5(\overline{\tau n}- (\bar{t}-m))\log\left( \frac{1}{\vert a_0 \vert}
   \right) \\
   & = O(dm),
\end{align*}
    and the maximal number of zeros of $f_\lambda$, with respect to $\lambda\not=0$, is at most 
    $\widetilde{\nu}_{\ell_e}$
in $\bar{D}_\rho$, where
\[
\rho  = 
\frac{3}{4\cdot 8^{\widetilde{\nu}_{\ell_e}}}
 \frac{\vert \alpha_0\vert^{\bar{t}-m}\vert a_0\vert^{\overline{\tau n}-(\bar{t}-m)}}{m^{\frac{m+1}{2}}}.
 \]
         \item\label{item.Bezout Bound 2} In the case where $\nu_1=\nu_2$, 
         \begin{align*}
          \mathcal{Z}(f_\lambda) & \le    5(\bar{\nu}_{\ell_e}+(k+1)(\tau_{\ell_e}-1))\log2 \\
          & + 5\log\left( 2+\frac{4m^{\frac{m+1}{2}}}{ 3\bar{C} \vert \mu\vert^{\bar{t}-\bar{\tau}}
\vert a_0\vert^{  \overline{\tau n}-\bar{\tau}'} 
\vert \bar{\delta}_k\vert^{\bar{\tau}' } }  \right) \\
    & = O(dm),
         \end{align*}
         
 and the maximal number of zeros of $f_\lambda$, with respect to $\lambda\not=0$, is at most 
    $\nu n_{\ell_e}$
in $\bar{D}_\rho$, where
$$
\rho  = \frac{3}{4\cdot 8^{\nu n_{\ell_e}+(k+1)(\tau_{\ell_e}-1) } } \min\left\{\frac{2}{3}, 
 \frac{\bar{C} 
\vert \mu\vert^{\bar{t}-\bar{\tau}}
\vert a_0\vert^{  \overline{\tau n}-\bar{\tau}'  } 
\vert \bar{\delta}_k\vert^{
\bar{\tau}' }}{m^{\frac{m+1}{2}}} 
\right\}.
$$

\end{enumerate}

\end{thm}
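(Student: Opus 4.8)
The plan is to read this statement as a pure synthesis: it is obtained by substituting the exact values of the Bautin index $b$ (Proposition \ref{prop.valeur de b}), of the rank $\sigma$ (Proposition \ref{prop.valeur de sigma}) and of the lower bound for the minor $\delta$ (Proposition \ref{prop.valeur de delta}) into the general zero-counting estimate of Corollary \ref{cor.nombre de zeros avec B=R=1}, which already records the normalization $R=B_R=1$. Since the hypothesis is that $P_1$ and $P_2$ are not proportional, I would use throughout the non-proportional branches of those three propositions, namely $\sigma=m$ together with $\dim(L_b)=0$; the latter lets me replace ``$\lambda\notin L_b$'' by ``$\lambda\neq 0$'' in the two radius statements. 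I will also record at the outset that the normalization forces $|\alpha_0|,|a_0|\le 1$: the components of $\widetilde P=P/M_R$ are bounded by $1$ on $\bar D_1$, so Cauchy's estimate bounds each of their coefficients, and in particular $\alpha_0$ and $a_0$, by $1$.

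First I would treat the case $\nu_1\neq\nu_2$. Here Proposition \ref{prop.valeur de b} gives $b=\widetilde\nu_{\ell_e}$ with $\ell_e=\mathrm{argmax}_\ell\,\widetilde\nu_\ell$, and Proposition \ref{prop.valeur de delta} gives the lower bound $\delta\ge|\alpha_0|^{\bar t-m}|a_0|^{\overline{\tau n}-(\bar t-m)}=:L$. Plugging $b$, $\sigma=m$ and $\delta\ge L$ into the $\mathcal Z$-inequality of Corollary \ref{cor.nombre de zeros avec B=R=1} (valid since the bound is decreasing in $\delta$) yields the compact form $5b\log2+5\log\bigl(2+\tfrac{4m^{(m+1)/2}}{3L}\bigr)$. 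To reach the displayed distributed form I would split the logarithm by elementary majorizations, using that $L\le 1$ while $m^{(m+1)/2}\ge 2^{3/2}$ for $m\ge 2$, so that the additive $2$ and the factor $4/3$ are absorbed and $\log(1/L)=(\bar t-m)\log(1/|\alpha_0|)+(\overline{\tau n}-(\bar t-m))\log(1/|a_0|)$ distributes. For the radius I would invoke the second bound of Corollary \ref{cor.nombre de zeros avec B=R=1}; since $L\le 1$ one has $\tfrac{3L}{4m^{(m+1)/2}}<\tfrac12$, so the minimum is realized by the $\delta$-term and, after substitution, gives exactly the stated $\rho=\tfrac{3}{4\cdot 8^{\widetilde\nu_{\ell_e}}}\,\tfrac{|\alpha_0|^{\bar t-m}|a_0|^{\overline{\tau n}-(\bar t-m)}}{m^{(m+1)/2}}$.

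The case $\nu_1=\nu_2=\nu$ is handled identically, now with $b=\nu n_{\ell_e}+(k+1)(\tau_{\ell_e}-1)$ from \eqref{eq.valeur de b multiplicites egales} and $\delta\ge\bar C\,|\mu|^{\bar t-\bar\tau}|a_0|^{\overline{\tau n}-\bar\tau'}|\bar\delta_k|^{\bar\tau'}$ from \eqref{eq.valeur de delta multiplicites egales}. Here I would leave the logarithm in its compact shape, since this $\delta$-bound need no longer be $\le 1$ (for instance $|\mu|$ may exceed $1$), which is exactly why the minimum is kept in the radius; I would simply rewrite $\tfrac1{8^b}\min\{\tfrac12,\tfrac{3\delta}{4m^{(m+1)/2}}\}=\tfrac{3}{4\cdot 8^b}\min\{\tfrac23,\tfrac{\delta}{m^{(m+1)/2}}\}$ and substitute. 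The uniform estimate $\mathcal Z=O(dm)$ in both cases then follows by reading off that, for $P_1,P_2$ fixed (so $\nu_1,\nu_2,k,\mu,\alpha_0,a_0,\bar\delta_k$ are constants), one has $b=O(d)$, $m\le d+1$ so $(m+1)\log m=O(dm)$, and each weight $\bar t-m$, $\overline{\tau n}$, $\bar\tau$, $\bar\tau'$ is $O(dm)$; moreover $\bar C\ge 1$ (from $t_{\ell,i}-t_{\ell,j}\ge i-j$ in the Vandermonde quotients defining $C_\ell$), so that factor only improves the bound.

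I expect no deep obstacle here: the genuine mathematical content lives entirely in the triangulation of the Bautin blocks (Sections \ref{section 2} and \ref{section 3}) and in the evaluation of $b,\sigma,\delta$, all of which are already available. The only real care is bookkeeping, namely tracking which lacunarity hypothesis forces $\ell_e=\ell_d$ versus a genuine $\mathrm{argmax}$, choosing the correct branch of the minimum in the radius, and checking that the logarithm-splitting of the case $\nu_1\neq\nu_2$ is legitimate --- which is precisely where the normalization $|\alpha_0|,|a_0|\le 1$ intervenes.
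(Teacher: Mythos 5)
Your proposal is correct and follows essentially the same route as the paper's own proof: substitute $\sigma=m$ (Proposition \ref{prop.valeur de sigma}), the values of $b$ from Proposition \ref{prop.valeur de b}, and the lower bounds on $\delta$ from Proposition \ref{prop.valeur de delta} into Corollary \ref{cor.nombre de zeros avec B=R=1}, absorbing the additive $2$ and factor $4/3$ via $|\alpha_0|,|a_0|\le 1$ and $m^{(m+1)/2}\ge 2\sqrt2$ in the case $\nu_1\neq\nu_2$ (this is exactly the paper's $(3\sqrt2+8)/6$ computation yielding the extra $5\log 2$), while keeping the compact logarithm and the minimum in the case $\nu_1=\nu_2$. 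Your additional observations --- that $\dim(L_b)=0$ justifies replacing $\lambda\notin L_b$ by $\lambda\neq 0$, that the minimum in the radius is realized by the $\delta$-term when $\delta\le 1$, and that $\bar{C}\ge 1$ secures the $O(dm)$ asymptotics --- are correct refinements of bookkeeping the paper leaves implicit, not a different method.
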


\begin{rem}\label{rem.symmetry}
The rational curves \((P_1,P_2)\) and \((P_2,P_1)\) are symmetric in the plane. Given a lacunary polynomial \(Q_{\lambda}(X,Y)\), the number of intersection points between \((P_1,P_2)\) and \(Q_{\lambda}=0\) is the same as the number of intersection points between \((P_2,P_1)\) and \(Q_{\lambda}(Y,X)=0\). Exchanging \(X\) and \(Y\) in \(Q_{\lambda}\) changes the numbering \((t_{0,1},\ldots,t_{\ell_d,\tau_{\ell_d}})\) of the columns in the following way: In a block \(M_{n_{\ell},T_{\ell}}\), the number \(\bar{t}_{\ell}\) becomes \((n_{\ell}+1)\tau_{\ell} - \bar{t}_{\ell}\). The bounds on \(\mathcal{Z}(f_{\lambda})\) and \(\rho\) in Theorem~\ref{thm.Bezout Bound for lacunary polynomials} are unaffected under this transformation.  
\end{rem}

\begin{proof}
We consider the two cases separately. 

\begin{enumerate}
    \item
Let us first consider the case  where $\nu_1\not=\nu_2$.

By Proposition \ref{prop.valeur de sigma} we have 
$\sigma=m$, 
       by \eqref{eq.valeur de b multiplicites differentes} we have 
$b=\widetilde{\nu}_{\ell_e}$, and by \eqref{eq.valeur de delta multiplicites differentes}
$\delta\ge \vert \alpha_0\vert^{\bar{t}-m}\vert a_0\vert^{\overline{\tau n}-(\bar{t}-m)}$. 
 By Corollary \ref{cor.nombre de zeros avec B=R=1} we deduce that 
$$
\mathcal{Z}(f_\lambda)
\le 
5\widetilde{\nu}_{\ell_e}\log2 + 5\log\left( 2+\frac{4m^{\frac{m+1}{2}}}{3\vert \alpha_0\vert^{\bar{t}-m}\vert a_0\vert^{\overline{\tau n}-(\bar{t}-m)}}  \right).
$$
Notice that since $m\ge 2$ and by Cauchy estimate $\vert \alpha_0\vert $
and $\vert a_0\vert $ are bounded from above by $1$, we have 
$$
\frac{m^{\frac{m+1}{2}}}{\vert \alpha_0\vert^{\bar{t}-m}\vert a_0\vert^{\overline{\tau n}-(\bar{t}-m)}}\ge 2\sqrt{2},
$$
and therefore
\begin{align*}
    2+\frac{4}{3}
    \frac{m^{\frac{m+1}{2}}}{\vert \alpha_0\vert^{\bar{t}-m}\vert a_0\vert^{\overline{\tau n}-(\bar{t}-m)}}
    \le \frac{3\sqrt{2}+8}{6}\frac{m^{\frac{m+1}{2}}}{\vert \alpha_0\vert^{\bar{t}-m}\vert a_0\vert^{\overline{\tau n}-(\bar{t}-m)}}.
\end{align*}
Recall that the constant \(5\) is in fact an approximation from above of \(1/\log(5/4)\). Since we have
\begin{align*}
    \frac{\log((3\sqrt{2}+8)/6)}{\log(5/4)} < 5\log 2,
\end{align*}
we can get the following bound from above for \(\mathcal{Z}(f_\lambda)\)
\begin{align*}
   \mathcal{Z}(f_\lambda) & \le  
   5\log2(\widetilde{\nu}_{\ell_e}+1) 
   + \frac{5}{2}(m+1)\log m\\
  & +5(\bar{t}-m)\log\left( \frac{1}{\vert \alpha_0 \vert}\right)
   +5(\overline{\tau n}-(\bar{t}-m))\log\left( \frac{1}{\vert a_0 \vert}
   \right). 
\end{align*}
    Applying the second part of Corollary \ref{cor.nombre de zeros avec B=R=1} we immediately obtain that the maximal number of zeros of $f_\lambda$, with respect to $\lambda\not=0$, is at most 
    $\widetilde{\nu}_{\ell_e}$
in $\bar{D}_\rho$, where
$$
\rho  =
\frac{1}{8^{\widetilde{\nu}_{\ell_e}}}
 \frac{3\vert \alpha_0\vert^{\bar{t}-m}\vert a_0\vert^{\overline{\tau n}-(\bar{t}-m)}}{4m^{\frac{m+1}{2}}} .
$$
    
    \item
In the subcase where $\nu_1=\nu_2$, by Proposition \ref{prop.valeur de sigma} we still have 
$\sigma=m$, 
       by \eqref{eq.valeur de b multiplicites egales} we have 
$b=\bar{\nu}_{\ell_e}+(k+1)(\tau_{\ell_e}-1)$, and by \eqref{eq.valeur de delta multiplicites egales}
$\delta
\ge \bar{C} \vert \mu\vert^{\bar{t}-\bar{\tau}}
\vert a_0\vert^{  \overline{\tau n}-\bar{\tau}'} 
\vert \bar{\delta}_k\vert^{\bar{\tau}' }$.  
Then Corollary \ref{cor.nombre de zeros avec B=R=1} gives immediately the bounds in our statement for $ \mathcal{Z}(f_\lambda)$ and $\rho$.\qedhere
\end{enumerate}
\end{proof}

In fact under condition~\ref{eq.Lacunarity Condition 1}, we can directly improve the bounds from the previous Theorem by replacing \(\sigma\) by \(\tau_{\ell_d}\) and the bounds on \(\delta\) by the bounds given in the second part of Proposition~\ref{prop.valeur de delta}.
\begin{thm}\label{thm.Bezout Bound for lacunary polynomials cas L1} Let a family $f_\lambda$ be given as in \eqref{eq:f.lambda}, with $R=B_R=1$.
Under condition \eqref{eq.Lacunarity Condition 1}, denoting by  $\mathcal{Z}({f_\lambda})$ the maximal number of zeros of $f_\lambda$ in $\bar{D}_{\frac{1}{4}}$, with respect to $\lambda\not\in L_b$, we have, for $P_1$ and $P_2$ not proportional,  
\begin{enumerate}
    \item
   in the case where $\nu_1\not=\nu_2$, 
    \begin{align*}
    \mathcal{Z}(f_\lambda)
    & \le 
5\widetilde{\nu}_{\ell_d}\log2 + 5\log\left( 2+\frac{4\tau_{\ell_d}^{\frac{\tau_{\ell_d}+1}{2}}}{3\vert \alpha_0\vert^{\bar{t}_{\ell_d}-\tau_{\ell_d}}\vert a_0\vert^{\tau_{\ell_d}d-(\bar{t}_{\ell_d}-\tau_{\ell_d})}}      \right) \\
    & = O(d\tau_{\ell_d}),
    \end{align*}
    and the maximal number of zeros of $f_\lambda$, with respect to $\lambda\not=0$, is at most 
    $\widetilde{\nu}_{\ell_d}$
in $\bar{D}_\rho$, where
$$
\rho  = 
\frac{3}{4\cdot 8^{\widetilde{\nu}_{\ell_d}}}
 \frac{\vert \alpha_0\vert^{\bar{t}_{\ell_d}-\tau_{\ell_d}}\vert a_0\vert^{\tau_{\ell_d}d-(\bar{t}_{\ell_d}-\tau_{\ell_d})}}{\tau_{\ell_d}^{\frac{\tau_{\ell_d}+1}{2}}}.
 $$
         \item
         In the case where $\nu_1=\nu_2$, 
         \begin{align*}
          \mathcal{Z}(f_\lambda) & \le    5(\nu d+(k+1)(\tau_{\ell_d}-1))\log2 \\
          & + 5\log\left( 2+\frac{4\tau_{\ell_d}^{\frac{\tau_{\ell_d}+1}{2}}} { 3C_{\ell_d} \vert \mu\vert^{\bar{t}_{\ell_d}-\frac{\tau_{\ell_d}(\tau_{\ell_d}+1)}{2}}
\vert a_0\vert^{\tau_{\ell_d}d - \frac{\tau_{\ell_d}(\tau_{\ell_d}-1)}{2}} 
\vert \bar{\delta}_k\vert^{\frac{\tau_{\ell_d}(\tau_{\ell_d}-1)}{2} }}  \right) \\
    & = O(d\tau_{\ell_d}),
         \end{align*}
 and the maximal number of zeros of $f_\lambda$, with respect to $\lambda\not=0$, is at most 
    $\nu d$
in $\bar{D}_\rho$, where
$$
\rho  = \frac{3}{4\cdot  8^{\nu d+(k+1)(\tau_{\ell_d}-1) } } \min\left\{\frac{2}{3}, 
 \frac{C_{\ell_d} \vert \mu\vert^{\bar{t}_{\ell_d}-\frac{\tau_{\ell_d}(\tau_{\ell_d}+1)}{2}}
\vert a_0\vert^{\tau_{\ell_d}d - \frac{\tau_{\ell_d}(\tau_{\ell_d}-1)}{2}} 
\vert \bar{\delta}_k\vert^{\frac{\tau_{\ell_d}(\tau_{\ell_d}-1)}{2} }}
{\tau_{\ell_d}^{\frac{\tau_{\ell_d}+1}{2}}} 
\right\}.
$$

\end{enumerate}

\end{thm}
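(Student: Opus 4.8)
The plan is to follow verbatim the structure of the proof of Theorem~\ref{thm.Bezout Bound for lacunary polynomials}, the only difference being that under the stronger hypothesis~\eqref{eq.Lacunarity Condition 1} we substitute the \emph{last-block} data for the global data: we replace $\sigma$ by $\tau_{\ell_d}$ and $\delta$ by the explicit minor of $M_{n_{\ell_d},T_{\ell_d}}$ furnished by the second part of Proposition~\ref{prop.valeur de delta}. The justification for this substitution is precisely the content of Remark~\ref{rem.c condition 1}, which I would invoke as the single new ingredient; everything else is plugging into Corollary~\ref{cor.nombre de zeros avec B=R=1}.

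First I would record the key separation phenomenon. Under~\eqref{eq.Lacunarity Condition 1} we have $\nu n_{\ell_d} > n_{\ell_d-1}D$, so the rows contributed by the last block $M_{n_{\ell_d},T_{\ell_d}}$ (which begin at row $\bar{\nu}_{\ell_d}+1$) lie strictly below \emph{all} rows contributed by the earlier blocks. Consequently, for every index $j>b$ the linear form $v_j$ already lies in the span of the basis elements $v_k$ with $k\ge \bar{\nu}_{\ell_d}-1$, i.e.\ those coming from the last block alone; this is the set $S_{\ell_d}$ of Remark~\ref{rem.c condition 1}. Re-running the Cramer-rule estimate of Proposition~\ref{Prop:Est.c} restricted to the $\tau_{\ell_d}$-dimensional subsystem indexed by $S_{\ell_d}$ then yields the same bound on the constants $\gamma_j^k$ appearing in the tail of~\eqref{eq.maj gamma}, but with $\sigma$ replaced by $\tau_{\ell_d}$ and $\delta$ replaced by the determinant of the upper $\tau_{\ell_d}\times\tau_{\ell_d}$ minor of $M_{n_{\ell_d},T_{\ell_d}}$. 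This is the step I expect to carry the whole weight of the argument, and the main obstacle is simply making the reduction to $S_{\ell_d}$ rigorous; once that is in hand the rest is formal.

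With this in place I would split into the two cases exactly as before. Since~\eqref{eq.Lacunarity Condition 1} forces both maxima in Proposition~\ref{prop.valeur de b} to be attained at $\ell=\ell_d$, I read off $b=\widetilde{\nu}_{\ell_d}$ when $\nu_1\neq\nu_2$ and $b=\nu d+(k+1)(\tau_{\ell_d}-1)$ when $\nu_1=\nu_2$. For the determinant I take the last-block values~\eqref{eq.valeur de delta multiplicites differentes cas L1} and~\eqref{eq.valeur de delta multiplicites egales cas L1} respectively, recalling $n_{\ell_d}=d$. Feeding $b$, $\tau_{\ell_d}$, and these refined $\delta$-bounds into both assertions of Corollary~\ref{cor.nombre de zeros avec B=R=1} produces directly the displayed bounds on $\mathcal{Z}(f_\lambda)$ and on the radius $\rho$; in the case $\nu_1\neq\nu_2$ I would, as in Theorem~\ref{thm.Bezout Bound for lacunary polynomials}, additionally use $m\ge 2$ and the Cauchy bounds $|\alpha_0|,|a_0|\le 1$ to absorb the additive constant $2$ into the logarithm and to convert $1/\log(5/4)$ into the cleaner factor $5\log 2$ if one wishes the separated form of the bound.

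Finally I would verify the asymptotic $O(d\tau_{\ell_d})$. The term $5b\log 2$ is $O(d)$ since $\widetilde{\nu}_{\ell_d}\le n_{\ell_d}\max(\nu_1,\nu_2)$ and $\nu d+(k+1)(\tau_{\ell_d}-1)\le \nu d + D\tau_{\ell_d}$. In the logarithm, $\log(\tau_{\ell_d}^{(\tau_{\ell_d}+1)/2})=\tfrac{\tau_{\ell_d}+1}{2}\log\tau_{\ell_d}=O(\tau_{\ell_d}\log\tau_{\ell_d})$, while $\log(1/\delta)$ is controlled by the exponents $\bar t_{\ell_d}-\tau_{\ell_d}\le \tau_{\ell_d}d$ and $\tau_{\ell_d}d$, each $O(d\tau_{\ell_d})$, since every $t_{\ell_d,i}\le d+1$. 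Summing gives $\mathcal{Z}(f_\lambda)=O(d\tau_{\ell_d})$, completing the proof.
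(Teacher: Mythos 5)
Your proposal is correct and takes essentially the same approach as the paper, which establishes this theorem precisely by the substitution you describe: Remark~\ref{rem.c condition 1} justifies replacing $\sigma$ by $\tau_{\ell_d}$ and $\delta$ by the upper $\tau_{\ell_d}\times\tau_{\ell_d}$ minor of $M_{n_{\ell_d},T_{\ell_d}}$ (the bounds \eqref{eq.valeur de delta multiplicites differentes cas L1} and \eqref{eq.valeur de delta multiplicites egales cas L1} from the second part of Proposition~\ref{prop.valeur de delta}), with $b$ read off from Proposition~\ref{prop.valeur de b} at $\ell=\ell_d$ and everything plugged into Corollary~\ref{cor.nombre de zeros avec B=R=1}. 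Your expanded justification of the reduction to the index set $S_{\ell_d}$, via the row separation that \eqref{eq.Lacunarity Condition 1} forces between the last block and all earlier ones, is exactly the argument sketched in Remark~\ref{rem.c condition 1}.
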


\begin{rem}
The case where $P_1$ and $P_2$ are proportional has no practical interest, since in this situation $(P_1,P_2)$ parametrizes a line throughout the origin. Nevertheless, by sake of exhaustiveness, we give here the form of the bound obtained when \(\ell_d \ge 1\):
\begin{align*}
   \mathcal{Z}(f_\lambda) & \le
   5\log2(\nu d  +1) 
   + \frac{5}{2}(\ell_d+2)\log (\ell_d+1)\\
  & +5(\bar{t}'_1-\ell_d-1)\log\left( \frac{1}{\vert \mu \vert }\right)
   +5\bar{n}\log\left( \frac{1}{\vert a_0 \vert}
   \right)
    . 
\end{align*}
 and the maximal number of zeros of $f_\lambda$, with respect to $\lambda\not\in L_b$, is at most $\nu d$
in $\bar{D}_\rho$, where
$$
\rho  =
\frac{3}{4\cdot 8^{\nu d}}  \frac{\vert\mu\vert^{\bar{t}'_1 - (\ell_d+1)} 
\vert a_0\vert^{\bar{n}}}{(\ell_d+1)^{\frac{\ell_d+2}{2}}}  .
$$
Indeed, in this case  by Proposition \ref{prop.valeur de sigma} we have
$\sigma=\ell_d+1$, 
        by \eqref{eq.valeur de b P1 et P2 proportionnels} we have 
$b=\nu d$, and by \eqref{eq.valeur de delta P1 et P2 proportionnels}
$\delta
\ge 
\vert\mu\vert^{\bar{t}'_1 - (\ell_d+1)} 
\vert a_0\vert^{\bar{n}}$. 
Since $\vert a_0 \vert\le 1$, $\vert \mu a_0\vert=\vert \alpha_0 \vert \le 1$, and $\bar{n}\ge \bar{t}'_1 - (\ell_d+1) $, we have 
$\delta\le 1$, and thus
 $$
\frac{(\ell_d+1)^{\frac{\ell_d+2}{2}}}{\vert\mu\vert^{\bar{t}'_1 - (\ell_d+1)} 
\vert a_0\vert^{\bar{n}}}\ge 2\sqrt{2}.
$$
Again, applying Corollary \ref{cor.nombre de zeros avec B=R=1}, the same computation as in the case where $\nu_1\not=\nu_2$ gives the bounds for $ \mathcal{Z}(f_\lambda)$ and~$\rho$.

Of course, we can again make these bounds more accurate under the condition~\eqref{eq.Lacunarity Condition 1} using~
\eqref{eq.valeur de delta P1 et P2 proportionnels cas L1}.
\end{rem}

\begin{rem}\label{rem.dependance en a0}
Let $P=(P_1,P_2)$ be given, let us fix a lacunarity diagram (see Definition \ref{def.lacunarity diagram})  $(d,\ell_d, n_0, \ldots, n_{\ell_d},\tau_0, \ldots, \tau_{\ell_d},t_{0,1},\ldots t_{\ell_d,\tau_{\ell_d}})$, and let us choose a particular polynomial 
$Q_{\lambda_0}$ with this diagram. 
The polynomial $f_{\lambda_0}=Q_{\lambda_0}(P_1,P_2)$ being of degree $dD$, it has $dD$ roots in a disc $\bar{D}_R\subset \CC$, for some $R>0$. Following Remark \ref{rem.R=B=1}, the polynomial mapping 
$$
\widehat{P}(z)=P(4Rz)/\max_{z\in \bar{D}_1, i=1,2}\vert P_i(4Rz)\vert
$$
is bounded by $1$ on $\bar{D}_1$, and has $dD$ zeros in $\bar{D}_{\frac{1}{4}}$. It follows that the bounds given by Theorem \ref{thm.Bezout Bound for lacunary polynomials} for $\mathcal{Z}(f_\lambda)$, relatively to the particular  polynomial $\widehat{P}$ and relative to the particular degree $d$, 
are necessarily bigger than the largest expected bound $dD$, and consequently, for the particular polynomial $\widehat{P}$ and the particular degree $d$, have no interest.  This transformation of $P$ into $\widehat{P}$ shows that in order to obtain an interesting bound for $\mathcal{Z}(f_\lambda)$, uniform in the coefficients $\lambda$ of $Q_\lambda$, one cannot avoid a dependency on some coefficients of $P_1$ and $P_2$.

We stress here the fact that our bounds are not only uniform with respect to the coefficients of the lacunary polynomial $Q_\lambda$, and explicitly depend on its lacunarity diagram, but also depend on $\vert \alpha_0 \vert$ and $\vert a_0 \vert$, quantities which are modified when one replaces $P$ by $\widehat{P}$: considering 
$P$ with $\vert a_0\vert$ or $\vert \alpha_0\vert$ too small gives too large bounds (see for instance Example \ref{exam.Bounds} below).   

\end{rem}

The interest of the bounds provided by Theorem \ref{thm.Bezout Bound for lacunary polynomials} lies in particular, for a fixed choice of $P$, in the asymptotics they provide with respect to lacunarity diagrams, and in particular when $d$ goes to $\infty$. We give below such instance of interesting asymptotic bounds. 

\begin{exam}\label{exam.Bounds}
Let us fix $P_1,P_2\in \CC[X]$, of degree $D_1$ and $D_2$ (with \(D = \max(D_1,D_2)\)) and of multiplicity at least \(\nu \ge 1\) on zero. 
We then choose a sequence (parametrized by $d\in \NN$) of lacunarity diagrams 
$$
(\mathcal{D}_d)_{d\in \NN}=
\left( d,\ell_{d}, n_0, \ldots, n_{\ell_{d}}=d,\tau_0, \ldots, \tau_{\ell_d},t_{0,1},\ldots 
t_{ \ell_{d},\tau_{\ell_{d} }}
\right)_{d\in \NN}, 
$$
in the following way. 

\begin{itemize}
    \item[-] 
First of all we choose the sequence of degrees $n_0, n_1, \ldots, n_{\ell_d}=d$ so as to satisfy our simplest lacunarity condition 
\eqref{eq.Lacunarity Condition 1} 
$$n_0=0, n_1=1, \ldots, 
n_\ell=(D+1)^{\ell-1}, \ldots, n_{\ell_d}=(D+1)^{\ell_d-1}=d.$$ 
In particular $\ell_d=1+\dfrac{\log(d)}{\log(D+1)}.$ 
\item[-] 
Then, we assume that \(\tau_{\ell_d}\) is bounded by an integer \(\tau\) independent of~$d$.
\end{itemize}
 
It follows, by Theorem \ref{thm.Bezout Bound for lacunary polynomials cas L1} 
\eqref{item.Bezout Bound 1}, in case $\nu_1\not=\nu_2$, that
\begin{align}
\mathcal{Z}(f_\lambda) & \le  
 5\log2+ 5 d\max(\nu_1,\nu_2)\log2 \nonumber + \frac{5}{2}
   \left(\tau+1\right)\log \tau \nonumber\\
   & +5\left(\tau d-\frac{\tau(\tau-1)}{2}\right)\log\left( \frac{1}{\vert \alpha_0 \vert}\right)
   +5\left(\tau d - \frac{\tau(\tau-1)}{2}\right)\log\left( \frac{1}{\vert a_0 \vert}
   \right), \nonumber\\
  &\label{exam.bound1} \underset{d\to +\infty}\sim
  5 d\left(
  \max(\nu_1,\nu_2)\log2 +  \tau
  \log\left( \frac{1}{\vert \alpha_0 a_0 \vert}
  \right)\right).
\end{align}
As stated in Remark~\ref{rem.symmetry}, we observe in this example the symmetric role of \(P_1\) and \(P_2\).

Still by Theorem \ref{thm.Bezout Bound for lacunary polynomials cas L1} 
\eqref{item.Bezout Bound 1}, in case $\nu_1\not=\nu_2$, the maximal number of zeros of $f_\lambda$
 is at most $d\max(\nu_1,\nu_2)$ in the disc $\bar{D}_\rho$, with 

$$
\rho  \ge
\frac{3}{4\cdot 8^{d \max(\nu_1,\nu_2)}}
 \frac{\vert \alpha_0 a_0\vert^{\tau d - \frac{\tau(\tau-1)}{2}}}{\tau^{\frac{\tau+1}{2}}}.
 $$

 Noticing that $C_{\ell_d} \ge 1$, in case $\nu_1=\nu_2=\nu$, it follows from Theorem \ref{thm.Bezout Bound for lacunary polynomials cas L1} 
\eqref{item.Bezout Bound 2}, that our bound on $\mathcal{Z}(f_\lambda)$ has the same asymptotics, as $d\to +\infty$, as in the case $\nu_1\not=\nu_2$, namely

\begin{align}\label{exam.bound2} 
 %
  5 d\left(
  \nu\log2 +  \tau
  \log\left( \frac{1}{\vert \alpha_0 a_0 \vert}
  \right)\right).
 %
\end{align}

\end{exam}

\begin{rem}\label{rem.faster lacunarity}
The asymptotics \eqref{exam.bound1} and \eqref{exam.bound2} for our bound on $\mathcal{Z}(f_\lambda)$, given in Example \ref{exam.Bounds}, are better than the maximal number of zeros $dD$, only in case  
$$
   \tau
  \log\left( \dfrac{1}{\vert \alpha_0 a_0 \vert}
  \right) < \dfrac{D}{5}-\max(\nu_1,\nu_2)\log2 .
$$
As already noticed in Remark \ref{rem.dependance en a0}, in the situation where $\vert \alpha_0 a_0\vert $  
is too small, those bounds have no interest; They depend on a specific choice of $(P_1,P_2)$.
\end{rem}

\begin{rem}\label{rem.Jensen}
The classical Jensen formula estimating from above the number of zeros on $\bar{D}(0,1/4)$, for an analytic function $f$ with initial Taylor monomial $w_0z^k$, gives the bound 
$$k\left(2+\frac{\log(1/\vert w_0 \vert)}{\log 4}\right)+
\frac{1}{\log(4)}\log\left(\frac{\sup_{z\in \partial\bar{D}(0,1)}\vert f(z)\vert}{\vert w_0\vert }\right).
$$
 Applied to the family $f_\lambda$, from this estimate we cannot deduce a uniform bound with respect to $\lambda$, since, in this case, $w_0$ is a linear form in $\lambda$. 
\end{rem}

\section{The case of rational curves}\label{Section6}

We show in this last section how to reduce the case of rational parametrizations, the only situation we considered so far, to the case of polynomial parametri\-za\-tions, in order to also get a version of Theorem \ref{thm.Bezout Bound for lacunary polynomials} in the general case of rational planar curves. 

For this, let us consider now the general case of the intersection of the algebraic curve \(Q=0\) of $\CC^2$ and of a rational curve is \((P_1/V,P_2/V)\), where \(P_1\), \(P_2\), \(V\) are in \(\CC[X]\), $X$ divides $P_1$ and $P_2$, \(\gcd(P_1,V)=\gcd(P_2,V)=1\), and \(P_1,P_2\) are not proportional. In particular, \(V\) has a nonzero constant term, denoted \(\beta_0\). 

For bounding the number of zeros of \(Q(P_1/V,P_2/V)\) following Section~\ref{section 4}, we are reduced to the case where the rational curve is defined on \(\bar{D}_1\) and both functions \(P_1/V\) and \(P_2/V\) are bounded by \(1\) on \(\bar{D}_1\) (see Remark~\ref{rem.R=B=1}). By Cauchy's estimate, \(\left\lvert a_0/\beta_0 \right\rvert \leq 1\) and \(\left\lvert \alpha_0/\beta_0 \right\rvert \leq 1\).

As above we set \(f_{\lambda}(z) = Q_{\lambda}(P_1(z),P_2(z))\). Then the Taylor series at the origin of \(g_{\lambda}(z) = Q_{\lambda}((P_1/V,P_2/V)(z))\) has for coefficients linear forms \(v_k, k\in \NN\), in the parameters \(\lambda_1,\ldots,\lambda_m\), and we write
\[
    g_{\lambda}(z) = \sum_{k \ge 0} v_k(\lambda) z^k, \quad \text{where }v_k(\lambda) = \sum_{i=1}^m c_{k}^i \lambda_i.
\]

With the notation of the previous sections, for $\ell\in \llbracket 1, \ell_d \rrbracket $, let 
\[P_1^{I}P_2^{n_{\ell}-I}(z) = \sum_{k\ge 0} r_k z^k 
\ \mathrm{and}
\ 1/V^{n_{\ell}}(z) = \sum_{k \ge 0} s_k z^k \] 
the Taylor series at the origin of $P_1^{I}P_2^{n_{\ell}-I}$ and 
$1/V$. 
In particular \(s_0 = 1/(\beta_0)^{n_{\ell}}\). 
We then have
\begin{align}
    \dfrac{P_1^I P_2^{n_{\ell}-I}}{V^{n_{\ell}}}(z) & = \sum_{k \ge 0} \sum_{j=0}^k s_jr_{k-j} z^k \nonumber\\
    & = \sum_{j \ge 0} s_j \sum_{k \ge 0} r_{k}z^{k+j} \label{eq:decomposition_series}.
\end{align}

Still with the notation of the beginning of Section \ref{Section 5}, let us consider the blocks \(M(f_\lambda)=M_{n_{\ell},T_{\ell}}(f_{\lambda})\) and \(M(g_\lambda)=M_{n_{\ell},T_{\ell}}(g_{\lambda})\) of the Bautin matrix of \(f_{\lambda}\) and \(g_\lambda\) respectively. The number of  rows of \(M(g_{\lambda})\) is potentially infinite, and we only consider the \(b(g_{\lambda})\) first rows of \(M(g_{\lambda})\). By~\eqref{eq:decomposition_series}, \(M(g_{\lambda})\) is a linear combination of several shifts of \(M(f_{\lambda})\). More precisely, by denoting by \(\widetilde{M}_{j}\) the \(j\)-th shift of \(M(f_{\lambda})\), obtained by adding \(j\) rows of zeros above \(M(f_{\lambda})\), we have that \(M(g_{\lambda})=\sum_{j \ge 0} s_j \widetilde{M}_j\). In particular, if we consider the triangular matrix 
\[
    G_{b,n_{\ell},T_{\ell}} = \begin{pmatrix}
        s_0 & 0 & \cdots &0 \\
        s_1 & s_0 & & \vdots\\
        \vdots & \vdots & \ddots & 0 \\
        s_b & s_{b-1} & \cdots & s_0
    \end{pmatrix},
\] 
we get that \(M(g_{\lambda}) = G_{b,n_{\ell},T_{\ell}} \cdot \widetilde{M}(f_{\lambda}) \) where \(\widetilde{M}(f_{\lambda})\) is the matrix \(M(f_{\lambda})\) truncated or completed with sufficiently many rows of zeros. As \(s_0\) is nonzero, the matrix \(G_{b,n_{\ell},T_{\ell}}\) is inversible. Hence, the parameters \(b\) and \(\sigma\) are no altered by this multiplication and the minor \(\delta\) is multiplied by \(s_0^{\tau_\ell}\).

Therefore, the matrix obtained by multiplying the full Bautin matrix 
 \(M_{T_{0},\ldots,T_{\ell_d}}(g_{\lambda})\), of the family $g_\lambda$, on the left by the matrix
\[
   \begin{pmatrix}
    0 & \cdots & \cdots & 0 \\
    G_{b,n_0,T_0}  &&& \vdots\\
    & G_{b,n_1,T_1} && \\
    \vdots & & \ddots & \vdots \\
    &&& G_{b,n_{\ell},T_{\ell}} \\
    0 &\cdots&\cdots& 0
   \end{pmatrix},
\]
where there are exactly \(\bar{\nu}_{\ell}\) rows of zeros above the block $G_{b,n_{\ell},T_{\ell}}$, have the same pertinent blocks, from the rank point of view, as \(M_{T_{0},\ldots,T_{\ell_d}}(f_{\lambda})\). We recall that \(\bar{\nu}_\ell\) was defined in section~\ref{Section 5} and represents the number of rows of zeros above the block \(M_{n_{\ell},T_{\ell}}\).

Consequently, under conditions~\eqref{eq.Lacunarity Condition 1}, \eqref{eq.Lacunarity Condition 1bis}, and~\eqref{eq.Lacunarity Condition 1bis with mult}, Propositions~\ref{prop.valeur de sigma} (value of \(\sigma\)) and~\ref{prop.valeur de b} (value of \(b\)) still stand. Concerning Proposition~\ref{prop.valeur de delta}, we now have that for \(\nu_1\ne \nu_2\)
\begin{equation*}
\delta\ge \left\lvert \dfrac{\alpha_0}{\beta_0}\right\rvert^{\bar{t}-m
}
\left\lvert \dfrac{a_0}{\beta_0}\right\rvert^{\overline{\tau n}-(\bar{t}-m)
} ,
\end{equation*}
and for $\nu_1=\nu_2$  
\begin{equation*}
\delta\ge
\bar{C} 
\vert \mu\vert^{\bar{t}-\bar{\tau}}
\left\lvert \dfrac{a_0}{\beta_0}\right\rvert^{  \overline{\tau n}-\bar{\tau}'  } 
\left\lvert \dfrac{\bar{\delta}_k}{\beta_0} \right\rvert^{
\bar{\tau}' }.
\end{equation*}

Similarly to Theorem~\ref{thm.Bezout Bound for lacunary polynomials}, using \(\left\lvert a_0/\beta_0 \right\rvert \leq 1\) and \(\left\lvert \alpha_0/\beta_0 \right\rvert \leq 1\), we obtain that under conditions~\eqref{eq.Lacunarity Condition 1}, \eqref{eq.Lacunarity Condition 1bis}, and~\eqref{eq.Lacunarity Condition 1bis with mult}, for \(m \ge 2\), the maximal number of zeros of \(g_{\lambda}\) in \(D_{1/4}\) is bounded
\begin{itemize}
    \item[-] in case \(\nu_1\ne \nu_2\), by
  \begin{align*}
   \mathcal{Z}(f_\lambda) & \le  
   5(\widetilde{\nu}_{\ell_d}+1)\log2 
   + \frac{5}{2}(m+1)\log m\\
  & +5(\bar{t}-m)\log\left( \frac{\vert \beta_0 \vert}{\vert \alpha_0 \vert}\right)
   +5(\overline{\tau n}-(\bar{t}-m))\log\left( \frac{\lvert \beta_0\rvert}{\vert a_0 \vert}
   \right), 
\end{align*}
\item[-] and in case \(\nu_1= \nu_2\), by
 \begin{align*}
          \mathcal{Z}(f_\lambda) & \le    5(\nu d+(k+1)\tau_{\ell_d}-k)\log2 \\
          & + 5\log\left( 2+\frac{4m^{\frac{m+1}{2}}}{ 3\bar{C} \vert \mu\vert^{\bar{t}-\bar{\tau}}
\vert a_0/\beta_0 \vert^{  \overline{\tau n}-\bar{\tau}'} 
\vert \bar{\delta}_k /\beta_0\vert^{\bar{\tau}' } }  \right).
         \end{align*}
\end{itemize}

Finally, the bound can be again simplified under condition~\ref{eq.Lacunarity Condition 1} when \(\tau_{\ell_d} \ge 2\)
\begin{align*}
   \mathcal{Z}(f_\lambda) & \le  
   5(\widetilde{\nu}_{\ell_d}+1)\log2 
   + \frac{5}{2}(\tau_{\ell_d}+1)\log (\tau_{\ell_d})\\
  & +5(\bar{t}_{\ell_d}-\tau_{\ell_d})\log\left( \frac{\vert \beta_0\vert}{\vert \alpha_0 \vert}\right)
   +5(\tau_{\ell_d}d- (\bar{t}_{\ell_d}-\tau_{\ell_d}))\log\left( \frac{\vert \beta_0\vert}{\vert a_0 \vert}
   \right).
\end{align*}

\vfill\eject

\bibliographystyle{siam}
\bibliography{Lacunarity}

\end{document}